\documentclass[11pt]{amsart}
\usepackage{latexsym}
\usepackage{amssymb}
\usepackage{amsmath}

\usepackage[pdftex,colorlinks]{hyperref}

\usepackage{amsfonts}

\newtheorem{theorem}{Theorem}[section]
\newtheorem{lemma}[theorem]{Lemma}
\newtheorem{proposition}[theorem]{Proposition}
\newtheorem{corollary}[theorem]{Corollary}

\newtheorem{remark}[theorem]{Remark}

\newtheorem{question}[theorem]{Question}

\newcommand\sspp{\mathop{\rm span}}
\newcommand\vn{\mathop{\rm VN}}
\newcommand\supp{\mathop{\rm supp}}

\newcommand\Bim{\mathop{\rm Bim}}
\newcommand\Sat{\mathop{\rm Sat}}

\newcommand\nul{\mathop{\rm null}}
\newcommand\esssup{\mathop{\rm esssup}}

\newcommand\an{^{-1}}

\newcommand\cb{\mathop{\rm cb}}

\def\gl{\lambda}

\newcommand{\cl}[1]{\mathcal{#1}}
\newcommand{\bb}[1]{\mathbb{#1}}
\newcommand{\du}[2]{\left\langle{#1},{#2} \right\rangle} 

\def\ot{\otimes}
\newcommand{\nor}[1]{\left\Vert #1\right\Vert}    
\newcommand{\sca}[1]{\left(#1\right)} %

\newcommand{\red}[1]{\textcolor{red}{#1}}

\begin{document}

\title[Ideals and bimodules]{Ideals of $A(G)$ and bimodules
over maximal abelian selfadjoint algebras}

\author{M. Anoussis, A. Katavolos and  I. G. Todorov}

\dedicatory{To the memory of Bill Arveson, with gratitude}

\address{Department of Mathematics, University of the Aegean,
Samos 83 200, Greece}

\email{mano@aegean.gr}

\address{Department of Mathematics, University of Athens,
Athens 157 84, Greece}

\email{akatavol@math.uoa.gr}

\address{Pure Mathematics Research Centre, Queen's University Belfast,
Belfast BT7 1NN, United Kingdom}

\email{i.todorov@qub.ac.uk}

\keywords{Fourier algebra, masa-bimodule, invariant subspaces}

\begin{abstract} This paper is concerned with
weak* closed masa-bimodules generated by $A(G)$-invariant subspaces of
$\vn(G)$. An annihilator formula is established, which is used to characterise the
weak* closed subspaces of $\cl B(L^2(G))$ which are
invariant under both Schur multipliers and
a canonical action of $M(G)$ on $\cl B(L^2(G))$ via completely bounded maps. 
We study the special cases of extremal ideals with a given null set
and, for a large class of groups, we establish a link between
relative spectral synthesis and relative operator synthesis.
\end{abstract}

\maketitle

\section{Introduction}\label{s_intro}

Let $G$ be a locally compact group. The algebra $M^{\cb}A(G)$ of
completely bounded multipliers of the Fourier algebra $A(G)$, introduced
in \cite{deCH}, has played a pivotal role in both Harmonic Analysis and
Operator Algebra Theory.
It was shown by J. E. Gilbert and by M. Bo\.{z}ejko and G. Fendler
in \cite{bf} (see also \cite{j} and \cite{spronk}) that
the map $N$ which sends a function $f : G\to \bb{C}$ to the function
$Nf : G\times G\to \bb{C}$ given by
$Nf(s,t) = f(ts^{-1})$,
carries $M^{\cb}A(G)$ isometrically into the
algebra of Schur multipliers $\frak{S}(G)$ on $G\times G$.
This result has led to 
fruitful interaction between
the two areas, see {\it e.g.} \cite{pi}, \cite{nrs} and \cite{spronk}.

The weak* closed subspaces of the von Neumann algebra $\vn(G)$
that are invariant under $A(G)$ are precisely the annihilators
of (closed) ideals $J\subseteq A(G)$.
On the other hand, the weak* closed subspaces of
the space $\cl B(L^2(G))$ of 
bounded operators on $L^2(G)$ which are
invariant under all Schur multipliers are precisely the
(weak* closed) masa-bimodules in $\cl B(L^2(G))$,
that is, invariant under the map $T\to M_fTM_g$ where 
$f,g\in L^\infty(G)$ (or, under left and right compostion with multiplication
operators from $L^\infty(G)$).  

Thus, given a closed ideal $J\subseteq A(G)$, there are two
natural ways to construct a weak* closed masa-bimodule
in $\cl B(L^2(G))$: (a) one may first consider the
norm closed masa-bimodule $\Sat(J)$ of $T(G)$ suitably generated by
$N(J)$ and then take the annihilator of $\Sat(J)$ in $\cl B(L^2(G))$, or
(b) one may first take the annihilator $J^{\perp}$ of $J$ in $\vn(G)$
and then generate a weak* closed masa-bimodule $\Bim(J^{\perp})$.
One of our main results, Theorem \ref{th_satlcg},
is that these two operations have the same
outcome; in other words, the diagram
\[
\begin{matrix}
J & \stackrel{\perp}{\longrightarrow} & J^{\perp}\\
\downarrow & &  \downarrow\\
\Sat(J) & \stackrel{\perp}{\longrightarrow} & \Bim(J^{\perp})
\end{matrix}
\]

\medskip

\noindent is commutative.
The proof uses the techniques developed by J. Ludwig, N. Spronk
and L. Turowska in \cite{spronk_tur} and \cite{lt}.
Some of the results in Section \ref{s_invmabi}
also appear in the aforementioned papers;
we have chosen to present complete arguments in order
to clarify some details.

Using this result, we present  a unified approach to some problems of
Harmonic Analysis on $G$.
In particular, in Section \ref{s_eb}, we look at the special cases where $J$
is the minimal, or the maximal, ideal of $A(G)$ with a given null set $E\subseteq G$.
The main result here is Theorem \ref{th_mma}; as a corollary, we obtain
the result established in \cite{lt} that if $A(G)$ possesses an approximate
identity then a closed set $E\subseteq G$ satisfies spectral synthesis
if and only if the set $E^* = \{(s,t) : ts^{-1}\in E\}$ satisfies
operator synthesis.

The connection between spectral synthesis and operator synthesis
was discovered by
W. B. Arveson in \cite{a}.  The above result is due to J. Froelich \cite{f}  for $G$ abelian
and  to N. Spronk and L. Turowska \cite{spronk_tur} for $G$ compact.
J. Ludwig and L. Turowska \cite{lt} show that
 a closed subset $E$ of a locally compact group $G$
satisfies {\em local } spectral synthesis
if and only if $E^*$  satisfies
operator synthesis; local spectral synthesis coincides with spectral synthesis
when $A(G)$ has an approximate identity.

Spectral synthesis relative to a fixed  $A(G)$-invariant subspace of $\vn(G)$ 
  was introduced for locally compact groups by E. Kaniuth and A.T. Lau in
\cite{kl}. In \cite{pp}, the authors define relative operator synthesis
for subsets of $G\times G$, where $G$ is compact,
and link it to relative spectral synthesis.  In Section \ref{s_rs},
using our results, and  assuming that 
the   $A(G)$-invariant subspace of $\vn(G)$  is weak* closed we prove an analogous relation    for locally compact groups
for which $A(G)$ possesses
an approximate identity.
We note that this class contains, but is larger than, the class of amenable groups.

As another application, we are able   to identify the weak* closed subspaces
of $\cl B(L^2(G))$ that are invariant
under both Schur multiplication and an action of the measure algebra $M(G)$.
More precisely, let
$\Gamma : M(G)\to \cl B(\cl B(L^2(G)))$ be the representation of $M(G)$ given by
$$\Gamma(\mu) (T) = \int_{G} \rho_rT\rho_r^*d\mu(r), \ \ \ T\in \cl B(L^2(G)).$$
This action was studied
by F. Ghahramani, M. Neufang, Zh.-J. Ruan, R. Smith, N. Spronk and E. St\o{}rmer
in \cite{gh}, \cite{n}, \cite{nrs}, \cite{smith_spronk}, \cite{stor}, among others.

The maps $\Gamma(\mu)$ are precisely those
weak* continuous completely bounded maps
on $\cl B(L^2(G))$ that are $\vn(G)$-bimodule maps and leave the
multiplication masa invariant \cite{n}, \cite{nrs}.
In Section \ref{s_invsub}, we show that the set $\cl L$ of all weak* closed
subspaces of $\cl B(L^2(G))$ that are invariant under both $\frak{S}(G)$ and
$\Gamma(M(G))$ consists precisely of the masa-bimodules of the form
 $\Bim(J^{\perp})$, where $J\subseteq A(G)$ is a closed ideal;
we also determine the lattice structure of $\cl L$.

In the presence of an approximate identity in $A(G)$, we show that
the generating invariant subspace of a bimodule of the form $\Bim(\cl X)$ can be
recovered by taking the intersection with $\vn(G)$.
Thus, the map $\cl X\rightarrow \Bim (\cl X)$ from the class of
weak* closed invariant subspaces of $\vn(G)$ 
to the class of weak* closed masa bimodules  
of $B(L^2(G))$ is in this case one-to-one.

\section{Preliminaries}\label{s_prel}

If $(X,m)$ is a $\sigma$-finite measure space,
we write $L^p(X)$   for $L^p(X,m)$.
For $\phi\in L^{\infty}(X)$, let $M_{\phi}$ be the operator on $L^2(X)$
of multiplication by $\phi$.
The collection
$\cl D_X = \{M_{\phi} : \phi\in L^{\infty}(X)\}$ is a maximal
abelian selfadjoint algebra (masa, for short). 

Let $X$ and $Y$ be  standard Borel spaces
(that is, Borel isomorphic to Borel subsets of
complete separable metric spaces), equipped with $\sigma$-finite 
measures $m$ and $n$.
A subset $E\subseteq X\times Y$ is called {\it marginally null} if
$E\subseteq (X_0\times Y)\cup(X\times Y_0)$, where
$m(X_0) = n(Y_0) = 0$; we write $E \simeq \emptyset$.
Two functions $h_1,h_2 : X\times Y\to \bb{C}$ are said to be equal
{\em marginally almost everywhere (m.a.e.)} or
{\em marginally equivalent} if the set
$\{(x,y) : h_1(x,y)\neq h_2(x,y)\}$ is marginally null.

Let $T(X,Y)$
be the projective tensor product $L^2(X)\hat{\otimes} L^2(Y)$.
Every element $h\in T(X,Y)$ is
an absolutely convergent series
\[
 h=\sum_{i=1}^{\infty} f_i\ot g_i, \quad f_i\in L^2(X), g_i\in L^2(Y), i\in \bb{N},
\]
where
$\sum_{i=1}^{\infty}\nor{f_i}_2^2<\infty$ and $\sum_{i=1}^{\infty}\nor{g_i}_2^2<\infty$.
Such an element $h$
may be considered either as a function $h:X\times Y\to\bb C$, defined
marginally almost everywhere and given by
\[
 h(x,y)=\sum_{i=1}^{\infty} f_i(x )g_i(y),
\]
or as an element of the predual of the space $\cl B(L^2(X),L^2(Y))$
of all bounded linear operators from $L^2(X)$ into $L^2(Y)$ via the pairing
\[
 \du{T}{h}_t := \sum_{i=1}^{\infty} \sca{Tf_i,\bar g_i}.
\]
We denote by $\nor{h}_t$ the norm of $h\in T(X,Y)$
and note that if  
$\phi\in L^{\infty}(X)$ and $\psi\in L^{\infty}(Y)$,
then the function $(\phi\otimes \psi) h$ belongs to $T(X,Y)$; thus,
$T(G)$ has a natural  $(L^{\infty}(X),L^{\infty}(Y))$-module structure.

Let $\frak{S}(X,Y)$ be the multiplier algebra of $T(X,Y)$; by definition, a
measurable function $w : X\times Y\rightarrow \bb{C}$ belongs to $\frak{S}(X,Y)$ if
the map $m_w: h\to wh$ leaves $T(X,Y)$ invariant, that is, if
$wh$ is marginally equivalent to a function from $T(X,Y)$,
for every $h\in T(X,Y)$.
The elements of $\frak{S}(X,Y)$ are called \emph{(measurable) Schur multipliers}.
The closed graph theorem can be used to show that
$m_w$ is automatically
a bounded operator; hence it has a dual
\[
S_w : \cl B(L^2(X),L^2(Y)) \rightarrow \cl B(L^2(X),L^2(Y)),
\]
given by
\[\du{S_w(T)}{h}_t = \du{T}{wh}_t, \ \ \ h\in T(X,Y), \; T\in \cl B(L^2(X),L^2(Y)).\]
If $\phi\in L^{\infty}(X)$ and $\psi\in L^{\infty}(Y)$,
one finds that $S_{\phi\otimes\psi}(T)=M_\psi TM_\phi$, $T\in \cl B(L^2(X),L^2(Y))$.
It follows that if
$k\in L^2(Y\times X)$, $T_k$
is the Hilbert-Schmidt operator from $L^2(X)$ into $L^2(Y)$ given by
$(T_k f)(y) = \int_X k(y,x) f(x) dm(x)$ ($f\in L^2(X)$) and
$w \in \frak{S}(X,Y)$,
then $S_w(T_k) = T_{w^\flat k}$ where
$w^\flat : Y\times X\to \bb{C}$ is the function 
$w^\flat (x,y)=w(y,x)$.

It can be shown  (\cite{peller_two_dim}, see also \red{\cite{haag, kp}} and
 \cite{spronk}) 
that $w\in \frak{S}(X,Y)$ if and only if $w$ can be represented in the form
\[w(x,y) = \sum_{k=1}^{\infty} a_k(x) b_k(y), \ \ \ \
\mbox{for almost all } (x,y)\in X\times Y,\]
where  $(a_k)_{k\in \bb{N}}\subseteq L^{\infty}(X)$ and
$(b_k)_{k\in \bb{N}}\subseteq L^{\infty}(Y)$ are sequences of functions with
$\esssup_{x\in X} \sum_{k=1}^{\infty} |a_k(x)|^2 < \infty$ and
$\esssup_{y\in Y} \sum_{k=1}^{\infty} |b_k(y)|^2 < \infty$.
In this case, 
\[
S_w(T)=\sum_{k=1}^\infty M_{b_k}TM_{a_k}, \ \ \ T\in\cl   B(L^2(X),L^2(Y)),
\]
where, for every $T\in\cl   B(L^2(X),L^2(Y))$, the series converges in the weak* topology. 
We write $w=\sum\limits_{k=1}^\infty a_k\ot b_k$
as a formal series. Moreover, the norm of $S_w$ as an operator on  
$\cl B(L^2(X),L^2(Y))$ (which of course also equals the norm of the operator
$m_w$ on $T(X,Y)$) is given by
\begin{align*}
\nor{S_w}
&=\inf\left\{  \nor{\sum_{k=1}^{\infty} |a_k|^2}_\infty^{1/2}
\nor{\sum_{k=1}^{\infty} |b_k|^2}_\infty^{1/2}:
\text{ all rep's }\; w=\sum_{k=1}^{\infty} a_{k}\otimes b_{k}\right\} \\
\end{align*}
We denote this quantity by $\nor{w}_{\frak{S}}$.
Note that if $w = \sum\limits_{k=1}^\infty a_k\ot b_k \in \frak{S}(X,Y)$ then
$w^\flat=\sum_{k=1}^{\infty} b_k\ot a_k \in \frak{S}(Y,X)$ and
$\nor{w}_{\frak{S}} = \nor{w^\flat}_{\frak{S}}$.

If $w\in \frak{S}(X,Y)$, the operator $S_w$ is a $(\cl D_Y,\cl D_X)$-module map, while
the operator $m_w$ is a $(L^\infty(X),L^{\infty}(Y))$-module map.
In fact, a weak* closed subspace $\cl U\subseteq \cl B(L^2(X),L^2(Y))$
is a \emph{masa-bimodule} in the sense that
$BTA\in \cl U$ for all $A\in \cl D_X$, $B\in \cl D_Y$ and
$T\in \cl U$, if and only if $\cl U$ is
invariant under the mappings $S_w$, $w\in \frak{S}(X,Y)$ 
\cite[Proposition 3.2]{eletod}.
It follows by duality that a norm closed subspace $V\subseteq T(X,Y)$ is
an $(L^{\infty}(X),L^{\infty}(Y))$-module if and only if
it is invariant under the mappings $m_w$, $w\in \frak{S}(X,Y)$.

\medskip

Throughout, $G$ will denote a second countable
locally compact group. 

We now summarise some results from non-commutative
harmonic analysis. 
All spaces $L^p(G)$ are with respect to 
left Haar measure  $m$;  $dm(x)$ is shortened to $dx$ and the modular function
is denoted by $\Delta$.
If $A, B\subseteq G$ we write $A^{-1} = \{x^{-1} :
x\in A\}$ and $AB = \{xy : x\in A, y \in B\}$.
Denote by $\lambda : G\rightarrow \cl B(L^2(G))$, $s\to \lambda_s$, the left
regular representation  and write
$(f,g)$ for the inner product of the elements $f,g\in L^2(G)$.
We set $T(G) = T(G,G)$,  $\mathcal{B}(L^2(G)) = \mathcal{B}(L^2(G), L^2(G))$ and
$\frak{S}(G) = \frak{S}(G,G)$.
The \emph{group von Neumann algebra} of $G$ is the algebra
$$\vn(G) = \sspp\{\lambda_x : x\in G\}^{-w*},$$ acting on $L^2(G)$,
while the \emph{Fourier algebra} $A(G)$ of $G$ \cite{eym}
is the (commutative, regular, semi-simple) Banach algebra
consisting of
all complex functions $u$ on $G$ of the form
\begin{equation}\label{ag}
u(x) = (\lambda_x \xi,\eta),\ \  x\in G, \mbox{ where } \xi,\eta\in L^2(G).
\end{equation}
Multiplication in $A(G)$ is pointwise, while
the norm $\|u\|$ of an element $u\in A(G)$
is the infimum of the products $\|\xi\|_2\|\eta\|_2$ over all
representations (\ref{ag}) of $u$. The spectrum of $A(G)$
is identified with $G$ via point evaluations.

Every element $\tau$ of the dual $A(G)^*$ 
defines a bounded operator
$T_\tau$ on $L^2(G)$ by the formula
\[ \du{\tau}{u}_a :=(T_\tau\xi,\eta)\]
(the symbol $\du{\cdot}{\cdot}_a$ is used to denote the duality between $A(G)$
and $A(G)^*$), where $u\in A(G)$ is given by (\ref{ag}). 
The map 
\[\tau\to T_\tau : A(G)^*\to\cl B(L^2(G))\]
sends $A(G)^*$ isometrically and weak* homeomorphically  onto $\vn(G)$.

Note that the spaces $A(G)^*$ and $\vn(G)$ are usually identified in the literature
and the map
$\tau\to T_{\tau}$ is suppressed;  we have chosen to retain it in order
to emphasize the different dualities  used in this paper.
The algebra $\vn(G)$ is a Banach $A(G)$-module
under the operation $$(u,T_{\tau})\to u T_\tau = T_{\tau'},$$ where $\tau'$ is 
defined by the relation
$$\du{\tau'}{v}_a=\du{\tau}{uv}_a, \ \ \ v\in A(G).$$

The predual $P: T(G)\to A(G)$
of the map $\tau\to T_{\tau}:A(G)^*\to\cl B(L^2(G))$ is the contraction 
given by
\begin{equation}\label{P}
\du{\tau}{P(h)}_a = \du{T_\tau}{h}_t, \ \ \ \tau\in A(G)^*, \ h\in T(G).
\end{equation}

To obtain an explicit formula for $P$, take 
$\tau$ such that $T_\tau=\gl_s$ and recall that
$\du{\tau}{u}_a=u(s)$, $s\in G$.
If $h\in T(G)$ is of the form $h=\sum_{i=1}^{\infty} f_i\ot g_i$,
where 
$\sum_{i=1}^{\infty}\|f_i\|_2^2 < \infty$ and $\sum_{i=1}^{\infty}\|g_i\|_2^2 < \infty$,
then
\begin{align*}
P(h)(s) &= \du{\tau}{P(h)}_a=\du{\gl_s}{h}_t
=\sum_{i=1}^{\infty} \sca{\lambda_s f_i,\bar g_i} \\
&= \sum_{i=1}^{\infty}\int_G f_i(s^{-1}t)g_i(t) dt
= \int_G \sum_{i=1}^{\infty} f_i(s^{-1}t)g_i(t) dt
\end{align*}
by Proposition \ref{rem2} below.
Thus
\begin{equation}\label{eq_p}
P(h)(s) = \int_G h(s\an t,t)dt,  \ \ \ s\in G.
\end{equation}

The space $A(G)$ has a canonical
operator space structure arising from its identification with the predual of $\vn(G)$
(the reader is referred to \cite{er}, \cite{pa}, \cite{pi} for the basic notions of
operator space theory). We write
\[MA(G) = \{v : G\rightarrow \bb{C} \ : \ vu\in A(G) \mbox{ for all } u\in A(G)\}\]
for the multiplier algebra of $A(G)$; 
the set of all $v\in MA(G)$ for which the map
$u\rightarrow vu$ on $A(G)$
is completely bounded will be denoted $M^{\cb}A(G)$ and equipped with the 
completely bounded norm.

Define \[N : L^{\infty}(G)\rightarrow L^{\infty}(G\times G)
\quad\text{by }\quad N(f)(x,y) = f(yx^{-1}).\]
We warn the reader that our definition of the map $N$ differs from
the one used in \cite{spronk_tur, lt}, where the expression
$f(xy^{-1})$ is used instead of $f(yx^{-1})$.

The following result \cite{bf}, \cite{spronk} (see also \cite{j}) will be used in the sequel.

\begin{theorem}\label{spro}
The map $u\rightarrow N(u)$ is an isometry 
from $M^{\cb}A(G)$ 
into $ \frak{S}(G)$.
Moreover, $N(M^{\cb}A(G))$ equals the space of those 
$w\in \frak{S}(G)$ for which $w(sr,tr) = w(s,t)$ for every $r\in G$ and
marginally almost all $s,t$.
\end{theorem}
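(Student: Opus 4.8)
The plan is to anchor both assertions on the intertwining relation $P\circ m_{N(v)}=m_v\circ P$ (where $m_v$ denotes multiplication by $v$ on $A(G)$), together with the representation theorem for completely bounded multipliers and the automatic complete boundedness of Schur multipliers. First, that $N$ maps $M^{\cb}A(G)$ into $\frak{S}(G)$ norm-decreasingly: here the operator space structure on $A(G)$ enters through the representation theorem (Gilbert; Bo\.{z}ejko--Fendler \cite{bf}; see also \cite{j}), which provides, for $v\in M^{\cb}A(G)$, a Hilbert space $\cl H$ and bounded maps $\xi,\eta\colon G\to\cl H$ with $v(ts\an)=\langle\eta(t),\xi(s)\rangle$ for all $s,t$ and $\|v\|_{M^{\cb}A(G)}=\inf\|\xi\|_\infty\|\eta\|_\infty$ over all such representations. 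Writing $\xi(s)=(\overline{a_k(s)})_k$ and $\eta(t)=(b_k(t))_k$ in an orthonormal basis of $\cl H$, with $(a_k),(b_k)\subseteq L^\infty(G)$, gives $N(v)(s,t)=v(ts\an)=\sum_k a_k(s)b_k(t)$ with $\esssup_s\sum_k|a_k(s)|^2=\|\xi\|_\infty^2$ and $\esssup_t\sum_k|b_k(t)|^2=\|\eta\|_\infty^2$; by the description of Schur multipliers recalled in Section \ref{s_prel}, $N(v)\in\frak{S}(G)$ and $\|N(v)\|_{\frak S}\le\|\xi\|_\infty\|\eta\|_\infty$, so $\|N(v)\|_{\frak S}\le\|v\|_{M^{\cb}A(G)}$ after taking the infimum.

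For the reverse inequality (hence the isometry), note that for any $v\in MA(G)$ with $N(v)\in\frak{S}(G)$ and any $h\in T(G)$, formula (\ref{eq_p}) gives
\[
P(N(v)h)(s)=\int_G N(v)(s\an t,t)\,h(s\an t,t)\,dt=\int_G v\big(t(s\an t)\an\big)h(s\an t,t)\,dt=v(s)\int_G h(s\an t,t)\,dt=v(s)P(h)(s),
\]
so $P\circ m_{N(v)}=m_v\circ P$ on $T(G)$. Now $P$ is the predual of the normal complete isometry $\tau\mapsto T_\tau$ of $A(G)^*$ onto $\vn(G)\subseteq\cl B(L^2(G))$, hence a complete metric surjection, so $\|m_v\circ P\|_{\cb}=\|m_v\|_{\cb}$; and the dual of $m_{N(v)}$ is the Schur map $S_{N(v)}=\sum_k M_{b_k}(\cdot)M_{a_k}$, a weak* convergent sum of elementary maps, so $\|m_{N(v)}\|_{\cb}=\|S_{N(v)}\|_{\cb}=\|S_{N(v)}\|=\|N(v)\|_{\frak S}$. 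Therefore
\[
\|v\|_{M^{\cb}A(G)}=\|m_v\|_{\cb}=\|m_v\circ P\|_{\cb}=\|P\circ m_{N(v)}\|_{\cb}\le\|m_{N(v)}\|_{\cb}=\|N(v)\|_{\frak S},
\]
which together with the previous paragraph shows that $N$ is an isometry on $M^{\cb}A(G)$.

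For the range, one inclusion is immediate: $N(v)(sr,tr)=v\big(tr(sr)\an\big)=v(ts\an)=N(v)(s,t)$, so every $N(v)$ has the stated right-invariance. Conversely, let $w\in\frak{S}(G)$ satisfy $w(sr,tr)=w(s,t)$ (for every $r$, marginally a.e.). A routine computation with Hilbert--Schmidt kernels shows that conjugating $T_k$ by $\rho_r$ replaces the symbol $w$ by $(s,t)\mapsto w(sr,tr)$, so $S_w$ commutes with every map $T\mapsto\rho_rT\rho_r^*$, $\rho$ the right regular representation. Since $\vn(G)=\{\rho_r:r\in G\}'$ is precisely the common fixed-point set of these maps, $S_w(\vn(G))\subseteq\vn(G)$, i.e.\ $m_w(\ker P)\subseteq\ker P$, and therefore $m_w$ induces a completely bounded map $\phi\colon A(G)\to A(G)$ with $\phi\circ P=P\circ m_w$ and $\|\phi\|_{\cb}\le\|w\|_{\frak S}$ (again because $P$ is a complete metric surjection). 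Moreover $S_w(\lambda_x)\in\vn(G)$ for each $x$, and since $S_w$ is a $\cl D_G$-bimodule map, $\lambda_xM_\varphi=M_{\lambda_x\varphi}\lambda_x$ and $\cl D_G\cap\vn(G)=\bb{C}I$, it follows that $S_w(\lambda_x)=v(x)\lambda_x$ for a scalar $v(x)$; unwinding the pairing, $\phi(u)(x)=P(wh)(x)=\du{\lambda_x}{wh}_t=\du{S_w(\lambda_x)}{h}_t=v(x)\du{\lambda_x}{h}_t=v(x)u(x)$ for $u=P(h)$, so $\phi=m_v$. Hence $v\in MA(G)$ and $\|m_v\|_{\cb}\le\|w\|_{\frak S}$, so $v\in M^{\cb}A(G)$; by the first two paragraphs $N(v)\in\frak{S}(G)$ and $P\circ m_{N(v)}=m_v\circ P=\phi\circ P=P\circ m_w$, which by the injectivity of $u\mapsto S_u$ on $\frak{S}(G)$ forces $N(v)=w$ marginally a.e. Thus $w\in N(M^{\cb}A(G))$.

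The expected main obstacle is not the algebra above but the measure theory it glosses over in the last paragraph: the kernel computation relating $S_w$ to the translated symbol, the identity $\cl D_G\cap\vn(G)=\bb{C}I$ used to get $S_w(\lambda_x)\in\bb{C}\lambda_x$, the measurability of $x\mapsto v(x)$, and the injectivity of $u\mapsto S_u$ all have to be carried out with care over the distinction between null and marginally null sets and with jointly measurable representatives---this is exactly the technical groundwork laid in \cite{spronk_tur} and \cite{lt}, and I would follow it. (The representation theorem for $M^{\cb}A(G)$ used at the start is itself substantial, but it is classical and can be quoted.)
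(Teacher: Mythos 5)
The paper does not prove Theorem \ref{spro}; it records it as known, citing \cite{bf}, \cite{spronk} and \cite{j}. Your argument reconstructs the standard route from those sources and is correct in outline. The two halves of the isometry are handled as in the literature: $\nor{N(v)}_{\frak{S}}\le\nor{v}_{M^{\cb}A(G)}$ via the Gilbert/Bo\.{z}ejko--Fendler representation, and the converse via the intertwining $P\circ m_{N(v)}=m_v\circ P$ of (\ref{pn}), the fact that $P$ is a complete quotient map (being the predual of the complete isometry $\tau\mapsto T_\tau$), and the automatic complete boundedness of Schur maps. The same ingredients correctly give $\nor{m_v}_{\cb}\le\nor{w}_{\frak{S}}$ in the range characterisation, and the commutation with ${\rm Ad}\,\rho_r$ together with $\cl D_G\cap\vn(G)=\bb C I$ does yield a scalar function $v$ with $S_w(\lambda_x)=v(x)\lambda_x$ and $\phi=m_v$.

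The one step whose justification is actually off is the last one: from $P\circ m_{N(v)}=P\circ m_w$ you conclude $N(v)=w$ m.a.e.\ ``by the injectivity of $u\mapsto S_u$''. That injectivity asserts $S_u=0\Rightarrow u=0$; here you only know that $S_{N(v)-w}$ vanishes on $\vn(G)$, which is a priori weaker. What is needed, and true, is that $w\mapsto P\circ m_w$ is injective on $\frak{S}(G)$. To see this, set $u=N(v)-w$; since elements of $A(G)$ are continuous, (\ref{eq_p}) gives
\[
0=P(uh)(x)=\int_G u(x^{-1}t,t)\,h(x^{-1}t,t)\,dt \qquad\text{for every } x\in G,\ h\in T(G).
\]
For fixed $x$, taking $h=(\lambda_{x^{-1}}f)\otimes g$ shows that the sections $t\mapsto h(x^{-1}t,t)$ run over all products $fg$ with $f,g\in L^2(G)$, hence over all of $L^1(G)$; and $t\mapsto u(x^{-1}t,t)$ is essentially bounded, since $u$ is bounded off a marginally null set. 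Thus $u(x^{-1}t,t)=0$ for a.e.\ $t$, for every $x$, and a Fubini argument with a jointly measurable representative gives $u=0$ a.e.\ on $G\times G$, whence $S_u=0$ and $u=0$ m.a.e. (Alternatively, one may show first, as in \cite{spronk}, that a right-invariant $w\in\frak{S}(G)$ is m.a.e.\ equal to $N(v)$ for some measurable $v$, and only then identify $v$ as a cb multiplier.) With this repair, and the measure-theoretic care you already flag, the argument is complete.
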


If $G$ is compact then $T(G)$ contains the constant functions, and
Theorem \ref{spro} implies that $N$ takes values in $T(G)$.

If $u\in A(G)$, $h\in T(G)$ and $t\in G$ then, using (\ref{eq_p}), we have
\begin{align}
    P(N(u)h)(t) &= \int N(u)(t\an s,s)h(t\an s,s)ds  \nonumber \\
&=\int u(s(t\an s)\an)h(t\an s,s)ds 
= u(t)P(h)(t) \nonumber \\
\text{so }\quad   P(N(u)h) &= uP(h). \label{pn}
  \end{align}

\section{Ideals and bimodules}\label{s_invmabi}

The main result of this section is the annihilator formula
of Theorem \ref{th_satlcg}.
We start by explaining its main ingredients.
Given a closed ideal $J$ of $A(G)$, we will abuse notation and identify
its annihilator $J^\perp$ with a (weak* closed) subspace of $\vn(G)$.
The space $J^{\perp}$ is \emph{invariant}, that is,
it is an $A(G)$-submodule of $\vn(G)$;
it is easy to see that every weak* closed invariant subspace of 
$\vn(G)$  arises in this way.
Similarly, there is a bijective correspondence between the class 
of all norm closed $L^{\infty}(G)$-bimodules in $T(G)$
and the class of all weak* closed masa-bimodules in $\cl B(L^2(G))$, given
by taking annihilators and pre-annihilators.

Given any  weak* closed invariant subspace $\cl X$ of $\vn(G)$,
we let $\Bim(\cl X)\subseteq \cl B(L^2(G))$
be the weak* closed masa-bimodule generated by $\cl X$.
It is not hard to see 
 that
\[
\mathrm{Bim}(\mathcal{X})=\overline{[\mathfrak{S}(G)\mathcal{X}]}^{w^*}.
\]

Note that if $\cl U\subseteq \cl B(L^2(G))$
is a weak* closed masa-bimodule
then $\cl U \cap \vn(G)$ is a
weak* closed invariant subspace of $\vn(G)$;
indeed, if $u\in A(G)$ and $T \in \cl U \cap \vn(G)$ then
$uT = S_{N(u)}(T)$. 

Given a closed ideal $J\subseteq A(G)$, we wish to define, similarly, a
norm closed $L^{\infty}(G)$-bimodule in $T(G)$ \lq\lq generated by'' $J$.
To this end, suppose first that $G$ is compact. Then, as pointed out in
Section \ref{s_prel}, $N(J)\subseteq T(G)$. Hence, one may consider
the norm closed $L^{\infty}(G)$-bimodule of $T(G)$
generated by $N(J)$, that is, the space
$\overline{[\frak{S}(G) N(J)]}^{\|\cdot\|_t}$.
If $G$ is not compact, the map $N$ does not take values in $T(G)$ but in
$\frak{S}(G)$. 
However, if $u\in A(G)$ then $N(u)$ belongs to $T(G)$ 'locally' in the sense that 
$N(u)\chi_{L\times L}\in T(G)$ for every compact subset $L\subseteq G$
(indeed, for such $L$, the function $\chi_{L\times L}$ is in $T(G)$ and since 
$N(u)$ is a multiplier of $T(G)$, the claim follows).
Hence we may consider the
closed $L^\infty(G)$-bimodule of $T(G)$ generated by  the set
\[
\{N(u)\chi_{L\times L}: u \in J, L\  \text{compact, } \ L\subseteq G \}.
\]
We will denote  this bimodule by $\Sat(J)$.
This bimodule may also be written as follows

\begin{proposition}\label{newsat}
If  $J\subseteq A(G)$ is a closed ideal, then
\[ \Sat(J)=\overline{[N(J)T(G)]}^{\|\cdot\|_t}. \]
\end{proposition}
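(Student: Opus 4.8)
The plan is to show that the two norm-closed subspaces $\Sat(J)$ and $\overline{[N(J)T(G)]}^{\|\cdot\|_t}$ coincide by establishing inclusion in both directions. The key observation is that $\Sat(J)$ is by definition the smallest norm-closed $(L^\infty(G),L^\infty(G))$-bimodule containing all functions of the form $N(u)\chi_{L\times L}$ with $u\in J$ and $L\subseteq G$ compact, and that by the duality recalled in Section~\ref{s_prel} (the equivalence between $L^\infty(G)$-bimodule structure and invariance under the maps $m_w$, $w\in\frak S(G)$), being an $L^\infty(G)$-bimodule is the same as being invariant under $h\mapsto wh$ for all Schur multipliers $w$.

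First I would prove $\Sat(J)\subseteq\overline{[N(J)T(G)]}^{\|\cdot\|_t}$. Since $N(u)$ is a Schur multiplier and $\chi_{L\times L}\in T(G)$ for $L$ compact, each generator $N(u)\chi_{L\times L}$ lies in $N(J)T(G)$, hence in the right-hand side. It then suffices to check that $\overline{[N(J)T(G)]}^{\|\cdot\|_t}$ is itself a norm-closed $L^\infty(G)$-bimodule; but for $u\in J$, $h\in T(G)$, $\phi,\psi\in L^\infty(G)$ we have $\phi\cdot(N(u)h)\cdot\psi = N(u)\,(\phi h\psi)$ and $\phi h\psi\in T(G)$, so $N(J)T(G)$ is already a bimodule, and its norm closure remains one. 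This gives one inclusion.

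For the reverse inclusion $\overline{[N(J)T(G)]}^{\|\cdot\|_t}\subseteq\Sat(J)$, the point is to approximate an arbitrary element $N(u)h$, $u\in J$, $h\in T(G)$, in $\|\cdot\|_t$-norm by elements of $\Sat(J)$. I would first reduce to elementary tensors $h=f\otimes g$ with $f,g\in L^2(G)$, by the absolute convergence of the series representation of $h$ and the fact that $\|N(u)\|_{\frak S}<\infty$ so that $m_{N(u)}$ is bounded on $T(G)$. Then, since $L$ ranges over all compact subsets of $G$ and $\chi_{L\times L}\to 1$ marginally as $L$ increases, one approximates $f$ and $g$ in $L^2$-norm by $\chi_L f$ and $\chi_L g$; this yields $N(u)(\chi_L f\otimes\chi_L g)=N(u)\chi_{L\times L}(f\otimes g)\to N(u)(f\otimes g)$ in $\|\cdot\|_t$. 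But $N(u)\chi_{L\times L}(f\otimes g) = \big(N(u)\chi_{L\times L}\big)\cdot(f\otimes g)$ is the image of the generator $N(u)\chi_{L\times L}$ under the bimodule action of $f\otimes g$ (viewed via multiplication operators, using that $f,g$ can be taken bounded by a further approximation, or directly since $T(G)$ is a module over itself in the appropriate sense), hence lies in $\Sat(J)$. Passing to the limit over $L$ and then over the series for $h$ places $N(u)h$ in $\Sat(J)$.

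The main obstacle I anticipate is the bimodule-action bookkeeping in the last step: the generators $N(u)\chi_{L\times L}$ are acted on by $L^\infty(G)$ on both sides, whereas to recover $N(u)(f\otimes g)$ one wants to multiply by $f\otimes g$ with $f,g\in L^2$, not $L^\infty$. The clean way around this is to note that any $f\in L^2(G)$ is a limit in $L^2$-norm of $\chi_K f$ with $\chi_K f\in L^2\cap L^\infty$ for $K$ of finite measure, and that multiplication by such bounded functions is exactly the $L^\infty$-bimodule action; combined with continuity of $m_{N(u)}$ and of the tensor-norm this closes the gap. Care is also needed that $\chi_{L\times L}$ is genuinely in $T(G)$ and equals $\chi_L\otimes\chi_L$ marginally, which was already noted in the excerpt, so no new argument is required there.
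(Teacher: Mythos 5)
Your argument follows essentially the same route as the paper: the easy inclusion uses that $\overline{[N(J)T(G)]}^{\|\cdot\|_t}$ is an $L^\infty(G)$-bimodule containing the generators $N(u)\chi_{L\times L}$, and the reverse inclusion relies on truncating so that the truncated function lies simultaneously in $T(G)$ and $\frak{S}(G)$, after which invariance of $\Sat(J)$ under Schur multipliers finishes the job. The paper packages the truncation through Lemma~\ref{tgsg} applied directly to $h$ (producing $h\chi_{K_n\times L_n}\in T(G)\cap\frak S(G)$ converging to $h$), rather than first reducing to elementary tensors and truncating $f,g$ separately; note also that your statement that $\chi_K f\in L^\infty$ for any finite-measure $K$ is not correct as written --- one must take $K$ inside a sublevel set $\{|f|\le n\}$ --- which is precisely what the level-set construction in Lemma~\ref{tgsg} automates, so the idea goes through once this is repaired.
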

\proof It is clear that $\Sat(J)\subseteq \overline{[N(J)T(G)]}^{\|\cdot\|_t}$.
For the converse, consider $u \in J$ and  $h \in T(G)$. It follows from 
Lemma \ref{tgsg} below that there are compact subsets $K_n$ and $L_n$ of $G$
such that $h\chi_{K_n\times L_n} \in \mathfrak{S}(G)$ and $h$ is the
$\|\cdot\|_t$-limit of the sequence $(h\chi_{K_n\times L_n})_{n}$.
Set $M_n=K_n\cup L_n$. Then 
$h\chi_{K_n\times L_n}=h\chi_{K_n\times L_n}\chi_{M_n\times M_n}$
and $N(u)h=\lim_n N(u)h\chi_{K_n\times L_n}\chi_{M_n\times M_n}$. But
$N(u)\chi_{M_n\times M_n} \in \Sat(J)$ and 
$ h\chi_{K_n\times L_n} \in \mathfrak{S}(G)$.  Thus
\[N(u)h=\lim_{n\to \infty} N(u)h\chi_{K_n\times L_n}\chi_{M_n\times M_n}  \in \Sat(J).\] \qed

\medskip

The rest of the section is devoted to the proof of the following theorem.

\begin{theorem}\label{th_satlcg}
Let $J\subseteq A(G)$ be a closed ideal.
Then
$\Sat(J)^{\perp} = \Bim (J^{\perp})$.
\end{theorem}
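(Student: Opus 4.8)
The plan is to establish the two inclusions $\Sat(J)^\perp \subseteq \Bim(J^\perp)$ and $\Bim(J^\perp) \subseteq \Sat(J)^\perp$ separately, both by passing through the annihilator duality between norm-closed $L^\infty(G)$-bimodules in $T(G)$ and weak* closed masa-bimodules in $\cl B(L^2(G))$. Since $\Sat(J) = \overline{[N(J)T(G)]}^{\|\cdot\|_t}$ by Proposition \ref{newsat}, an operator $T\in\cl B(L^2(G))$ lies in $\Sat(J)^\perp$ if and only if $\du{T}{N(u)h}_t = 0$ for all $u\in J$ and all $h\in T(G)$, equivalently (by the definition of $S_{N(u)}$, and using that $N(u)\in\frak S(G)$ so $S_{N(u)}$ is weak* continuous) if and only if $S_{N(u)}(T) = 0$ for all $u\in J$. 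So the whole theorem reduces to the identity
\[
\{T : S_{N(u)}(T) = 0 \text{ for all } u\in J\} \;=\; \Bim(J^\perp).
\]

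For the inclusion $\Bim(J^\perp)\subseteq\Sat(J)^\perp$: I would first check $J^\perp\subseteq\Sat(J)^\perp$, i.e.\ that $S_{N(u)}(T_\tau) = 0$ whenever $\tau\in J^\perp$ and $u\in J$. The key computation is that $S_{N(u)}(T_\tau) = T_{u\tau}$ (that is, Schur multiplication by $N(u)$ on an element of $\vn(G)$ implements the $A(G)$-module action), which should follow by testing against $h\in T(G)$ and using formula (\ref{pn}): $\du{S_{N(u)}(T_\tau)}{h}_t = \du{T_\tau}{N(u)h}_t = \du{\tau}{P(N(u)h)}_a = \du{\tau}{uP(h)}_a = \du{u\tau}{P(h)}_a = \du{T_{u\tau}}{h}_t$. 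Since $J$ is an ideal and $\tau\in J^\perp$, we get $u\tau = 0$ for $u\in J$, whence $S_{N(u)}(T_\tau)=0$. Then $\Sat(J)^\perp$ is a weak* closed masa-bimodule containing $J^\perp$, so it contains $\Bim(J^\perp)$.

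The reverse inclusion $\Sat(J)^\perp\subseteq\Bim(J^\perp)$ is the substantive half and where I expect the main obstacle. Here I would dualize: it suffices to show $\Bim(J^\perp)_\perp \subseteq \Sat(J) = \overline{[N(J)T(G)]}^{\|\cdot\|_t}$ as subspaces of $T(G)$. An element $h$ of the pre-annihilator $\Bim(J^\perp)_\perp$ is characterised by $\du{T_\tau}{wh}_t = 0$ for all $\tau\in J^\perp$ and all $w\in\frak S(G)$; using $P$ and that $P$ intertwines appropriately, together with the $L^\infty$-module structure, one wants to conclude that $h$ lies in the norm closure of $N(J)T(G)$. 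This is precisely the kind of statement where the machinery of Ludwig--Spronk--Turowska \cite{spronk_tur, lt} enters — one needs a "local" reconstruction of $h$ from the Fourier-algebra data, likely via a partition-of-unity / approximate-identity argument on compacta combined with the structure of $\Sat(J)$ described before Proposition \ref{newsat}, and the fact ($\cite[\text{Prop.\ 3.2}]{eletod}$ cited above) that bimodule-invariance is equivalent to $\frak S(G)$-invariance. The delicate point is controlling the norm estimates when approximating a general $h\in T(G)$ by elements of $N(J)T(G)$ — passing from the weak* / pairing condition back to a norm statement in $T(G)$ — and handling the non-compact case where $N(u)\notin T(G)$ forces one to work with the cutoffs $\chi_{L\times L}$ from Lemma \ref{tgsg} throughout. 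I would expect the proof in the paper to isolate this as one or two preparatory lemmas (the referenced Lemma \ref{tgsg} and companions) before assembling the two inclusions as above.
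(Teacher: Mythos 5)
Your reduction of the theorem to showing $\{T:S_{N(u)}(T)=0\text{ for all }u\in J\}=\Bim(J^\perp)$ is correct, and your proof of the containment $\Bim(J^\perp)\subseteq\Sat(J)^\perp$ is sound and essentially the same as the paper's: you verify $J^\perp\subseteq\Sat(J)^\perp$ via the identity $S_{N(u)}(T_\tau)=T_{u\tau}$ (which is formula (\ref{pn}) in disguise) and then use that $\Sat(J)^\perp$, being the annihilator of an $L^\infty(G)$-bimodule, is a weak* closed masa-bimodule, hence contains $\Bim(J^\perp)$. That half is fine.

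The genuine gap is the reverse inclusion, which you correctly dualize to $\Bim(J^\perp)_\perp\subseteq\Sat(J)$ and correctly identify as the substantive part, but for which you offer no argument — only a forecast that ``a partition-of-unity / approximate-identity argument on compacta'' should work. That forecast does not match the mechanism that is actually needed, and more importantly it does not constitute a proof. The paper's argument proceeds by a non-commutative Fourier decomposition that is entirely absent from your sketch: after cutting $h$ to compact support (Lemma \ref{tgsg}), one shows that $h\in\Bim(J^\perp)_\perp$ forces $S_{h_r}(T)=0$ for every right translate $h_r$ and every $T\in J^\perp$; one then integrates $h_r$ against the matrix coefficients $u^\pi_{i,j}$ of irreducible representations $\pi\in\widehat G$ to form the functions $h^\pi_{i,j}$ and their twisted versions $\tilde h^\pi_{i,j}$ (Lemmas \ref{tild}--\ref{neol}); the crucial structural point is that each $\tilde h^\pi_{i,j}$ lies in $\frak S(G)$ and satisfies the right-invariance $\tilde h^\pi_{i,j}(sr,tr)=\tilde h^\pi_{i,j}(s,t)$, so by Theorem \ref{spro} it equals $N(v)$ for some $v\in M^{\cb}A(G)$, and the annihilation by $J^\perp$ shows $vA(G)\subseteq J$, placing $\tilde h^\pi_{i,j}\chi_{L\times L}$ in $\Sat(J)$; finally a Peter--Weyl density argument (Lemma \ref{48}) recovers $h$ from the family $h^\pi_{i,j}\chi_{L\times L}$. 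None of these steps — the translation-invariance criterion from Theorem \ref{spro}, the intertwining via $\check u^\pi_{i,k}$ in Lemma \ref{neol}, or the density in Lemma \ref{48} — is suggested by a partition of unity, and an approximate identity in $A(G)$ is deliberately not assumed here (it is only invoked later, in Lemma \ref{le_appr}). As it stands, your proposal proves one inclusion and accurately locates but does not bridge the gap in the other.
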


We need several preliminary results.

\begin{proposition}\label{rem2}
Let $h = \sum_{i=1}^{\infty} f_i \otimes g_i\in T(G)$ and $s,t\in G$.
Then the function $h_{s,t} : G\to \bb{C}$ given by
$h_{s,t}(r) = h(sr,tr)$, $r\in G$, belongs to $L^1(G)$ and
$\nor{h_{s,t}}_1\le\nor{h}_t$.

In particular,
the sequence $(u_n)_{n\in \bb{N}}$ where $u_n(r)=\sum_{i=1}^n f_i(sr)g_i(tr)$,
converges in the norm of $L^1(G)$ and hence,
for all $f\in L^{\infty}(G)$,
\[
\int_G  f(r)h(sr,tr)dr
= \sum_{i=1}^{\infty} \int f(r) f_i(sr) g_i(tr)dr \, .
\]
\end{proposition}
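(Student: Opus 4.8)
The plan is to reduce the statement to a direct estimate on the sequence $h=\sum_i f_i\otimes g_i$ using the given bound $\sum_i\|f_i\|_2^2<\infty$ and $\sum_i\|g_i\|_2^2<\infty$, together with Cauchy--Schwarz and invariance of Haar measure under right translation. First I would fix $s,t\in G$ and consider the partial sums $u_n(r)=\sum_{i=1}^n f_i(sr)g_i(tr)$. For fixed $n$ this is plainly in $L^1(G)$ (it is a finite sum of products of $L^2$ functions, after noting that $r\mapsto f_i(sr)$ lies in $L^2(G)$ with the same norm as $f_i$ up to the modular function, and similarly for $g_i$). The key point is to show $(u_n)$ is Cauchy in $L^1(G)$: for $m<n$,
\[
\|u_n-u_m\|_1=\int_G\Bigl|\sum_{i=m+1}^n f_i(sr)g_i(tr)\Bigr|\,dr
\le\int_G\Bigl(\sum_{i=m+1}^n|f_i(sr)|^2\Bigr)^{1/2}\Bigl(\sum_{i=m+1}^n|g_i(tr)|^2\Bigr)^{1/2}dr,
\]
and then another application of Cauchy--Schwarz in $L^2(G)$ gives the bound $\bigl(\int_G\sum_{i>m}|f_i(sr)|^2dr\bigr)^{1/2}\bigl(\int_G\sum_{i>m}|g_i(tr)|^2dr\bigr)^{1/2}$. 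Here I must be careful with the right translations: by left-invariance of $m$, $\int_G|f_i(sr)|^2dr=\int_G|f_i(r)|^2\,d(s^{-1}r)$, which by left-invariance equals $\|f_i\|_2^2$; the same works for $g_i$. So this tail is bounded by $\bigl(\sum_{i>m}\|f_i\|_2^2\bigr)^{1/2}\bigl(\sum_{i>m}\|g_i\|_2^2\bigr)^{1/2}\to 0$, proving the Cauchy property. The same computation with $m=0$, $n\to\infty$ gives $\|h_{s,t}\|_1\le\bigl(\sum_i\|f_i\|_2^2\bigr)^{1/2}\bigl(\sum_i\|g_i\|_2^2\bigr)^{1/2}$; taking the infimum over all representations of $h$ yields $\|h_{s,t}\|_1\le\|h\|_t$.

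To conclude that the $L^1$-limit of $(u_n)$ actually equals the function $r\mapsto h(sr,tr)$ (so that the notation $h_{s,t}$ is justified and the series $\sum_i f_i(sr)g_i(tr)$ converges m.a.e.\ in $r$ to it), I would pass to an $m$-a.e.\ convergent subsequence of the $L^1$-Cauchy sequence and match it against the marginally-a.e.\ pointwise definition of $h$ on $X\times Y$; one needs to check that for a.e.\ fixed $(s,t)$ the slice $r\mapsto(sr,tr)$ avoids the marginally null exceptional set, which follows from Fubini together with invariance of the measure — this is the standard "slicing a marginally null set along a group translate" argument and is the one place requiring genuine care. Once the $L^1$-convergence $u_n\to h_{s,t}$ is established, the final displayed identity is immediate: for $f\in L^\infty(G)$, the functional $v\mapsto\int_G f(r)v(r)\,dr$ is continuous on $L^1(G)$, so $\int_G f(r)h(sr,tr)\,dr=\lim_n\int_G f(r)u_n(r)\,dr=\sum_{i=1}^\infty\int_G f(r)f_i(sr)g_i(tr)\,dr$.

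I expect the main obstacle to be the measure-theoretic bookkeeping in the second paragraph: making precise that for (marginally) almost every pair $(s,t)$ the diagonal-translate slice $r\mapsto h(sr,tr)$ is well-defined and that the pointwise series converges to it m.a.e., given that $h$ is only defined up to marginal equivalence. The analytic core — the double Cauchy--Schwarz estimate and the use of left-invariance of Haar measure to kill the translations — is routine. A minor subtlety worth flagging: one should verify that $r\mapsto f_i(sr)$ genuinely lies in $L^2(G)$ with $\|r\mapsto f_i(sr)\|_2=\|f_i\|_2$, which is exactly left-invariance of $m$ (no modular function appears here since we translate on the left inside the argument, $r\mapsto sr$); similarly for $g_i$. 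With these pieces in place, the proposition follows.
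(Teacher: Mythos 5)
Your proof is correct and takes essentially the same route as the paper: the double Cauchy--Schwarz estimate together with left-invariance of Haar measure, followed by taking the infimum over all representations of $h$. The paper estimates $\int_G|h(sr,tr)|\,dr$ directly and invokes Dominated Convergence for the remaining assertions rather than phrasing the argument as a Cauchy condition in $L^1$, and it leaves the measure-theoretic slicing point implicit (your resolution of it is the correct one), but these are cosmetic differences.
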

\begin{proof}
The argument below is due to Ludwig - Turowska
\cite[Proof of Theorem 4.11]{lt}. We reproduce it for completeness:
For
each $s,t\in G$, applying the Cauchy-Schwartz inequality, we obtain
\begin{eqnarray*}
\int_G|h(sr,tr)|dr&\leq&\int_G\sum_{i=1}^{\infty}|f_i(sr) g_i(tr)|dr\\
\nonumber&\leq&\int_G\left(\sum_{i=1}^{\infty}|f_i(sr)|^2\right)^{1/2}
\left(\sum_{i=1}^{\infty}|g_i(tr)|^2\right)^{1/2}dr\\
\nonumber&\leq&\left(\sum_{i=1}^{\infty}\int_G|f_i(sr)|^2dr\right)^{1/2}
\left(\sum_{i=1}^{\infty}\int_G|g_i(tr)|^2dr\right)^{1/2}\\
\nonumber&=&
\left(\sum_{i=1}^{\infty}\|f_i\|_2^2\right)^{1/2}
\left(\sum_{i=1}^{\infty}\|g_i\|_2^2\right)^{1/2}<\infty.
\end{eqnarray*}
Taking the infimum over all representations of $h$,
we obtain
\begin{equation}\label{l1estim}
\nor{h_{s,t}}_1\le\nor{h}_t.
\end{equation}
The remaining assertions are clear from this inequality after an application of the 
Lebesgue Dominated Convergence Theorem. 
\end{proof}

\medskip

Denote by $\widehat G$ the set of (equivalence classes of)
unitary irreducible representations of $G$. For $ \pi\in\widehat G$,
write $H_{\pi}$ for the Hilbert space where
the representation $\pi$ acts.
Fixing an orthonormal basis $\{e_n\}_{n\in \bb{N}_\pi}$ of $H_\pi$
(where $\bb{N}_{\pi}$ is either finite or equals $\bb{N}$), 
we write $u^\pi_{i,j}(r)=\sca{\pi(r)e_j,e_i}$ for the coefficients of $\pi$.

Let $\pi\in\widehat G$ and $h\in T(G)$.
Define
\begin{align}
h_r(s,t) &= h(sr,tr), \ \ \ r,s,t\in G; \nonumber \\
h^\pi(s,t) &= \int_G h_r(s,t) \pi(r)dr\in \cl B(H_\pi); \nonumber \\
\tilde h^\pi(s,t) &=\pi(s)h^\pi(s,t)= \int_G h_r(s,t)\pi(sr)dr\in  \cl B(H_\pi), 
\nonumber \\
\intertext{where the integrals are understood in the weak sense. We also let}
h^\pi_{i,j}(s,t)& =\sca{h^\pi(s,t)e_j,e_i}= \int_G h_r(s,t) u^\pi_{i,j}(r)dr; \nonumber \\
\tilde h^\pi_{i,j}(s,t)& =\sca{\tilde h^\pi(s,t)e_j,e_i}= \int_G h_r(s,t) u^\pi_{i,j}(sr)dr .
\label{star}
\end{align}
If $\phi$ is a function on $G$, we denote by $\check\phi$
the function given by $\check\phi(s)=\phi(s^{-1})$, $s\in G$.

\begin{lemma}\label{tild}
Let $h\in T(G)$.
Then
$\tilde{h}^\pi_{i,j}\in \frak{S}(G)$ and $\nor{\tilde{h}^\pi_{i,j}}_{\frak{S}}\le\nor{h}_t$.
\end{lemma}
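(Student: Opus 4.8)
The plan is to exhibit $\tilde h^\pi_{i,j}$ explicitly as a Schur multiplier by producing a representation $\tilde h^\pi_{i,j} = \sum_k a_k \otimes b_k$ with the required norm control, directly from a representation $h = \sum_i f_i \otimes g_i$ of $h$ in $T(G)$. Starting from the defining formula \eqref{star},
\[
\tilde h^\pi_{i,j}(s,t) = \int_G h(sr,tr)\, u^\pi_{i,j}(sr)\, dr,
\]
I would first substitute a convergent representation $h = \sum_n f_n\otimes g_n$ and use Proposition \ref{rem2} (with $f(r) = u^\pi_{i,j}(sr)$, which is bounded since $u^\pi_{i,j}$ is a coefficient of a unitary representation, hence $|u^\pi_{i,j}|\le 1$) to interchange sum and integral, obtaining
\[
\tilde h^\pi_{i,j}(s,t) = \sum_{n=1}^\infty \int_G f_n(sr)\, u^\pi_{i,j}(sr)\, g_n(tr)\, dr.
\]
The key algebraic move is the change of variable $r \mapsto t^{-1}r$ (left Haar measure is left-invariant), which turns this into $\sum_n \int_G f_n(st^{-1}r)\,u^\pi_{i,j}(st^{-1}r)\, g_n(r)\, dr$; more usefully, I want to keep the variables $s$ and $t$ separated, so instead I would write $h(sr,tr)$ using the substitution that makes the integrand a product of a function of $(s,r)$ and a function of $(t,r)$ — which it already is — and view the $r$-integral as a continuous analogue of the sum defining a Schur multiplier.

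Concretely, the cleanest route is to recognize $\tilde h^\pi_{i,j}$ as built from the functions $a^{(n)}_r(s) = f_n(sr) u^\pi_{i,j}(sr)$ and $b^{(n)}_r(t) = g_n(tr)$, so that $\tilde h^\pi_{i,j} = \int_G \sum_n a^{(n)}_r \otimes b^{(n)}_r \, dr$, an "integrated" sum of elementary tensors. To make this rigorous and norm-controlled I would approximate: for a fixed finite partition of (a large compact subset of) $G$ into sets $E_1,\dots,E_M$ with $r_\ell \in E_\ell$, form the finite sum $w_N = \sum_{\ell}\sum_{n\le N} m(E_\ell)\, a^{(n)}_{r_\ell}\otimes b^{(n)}_{r_\ell}$, which is visibly a Schur multiplier, and estimate its $\frak S$-norm using the formula for $\nor{\cdot}_{\frak S}$ quoted in the Preliminaries. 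For that estimate I would compute, for fixed $s$,
\[
\Big\| \sum_\ell \sum_{n\le N} m(E_\ell)\, |a^{(n)}_{r_\ell}(s)|^2 \Big\|
\]
— but here one must be careful: the two-sided $\ell^2$-sums over the index $(\ell,n)$ must be controlled, and the honest bound comes not from a naive Riemann sum but from Cauchy–Schwarz exactly as in Proposition \ref{rem2}. Indeed, for the $s$-side, $\sum_\ell m(E_\ell)\sum_n |f_n(sr_\ell)u^\pi_{i,j}(sr_\ell)|^2 \le \sum_\ell m(E_\ell)\sum_n|f_n(sr_\ell)|^2 \approx \int_G \sum_n |f_n(sr)|^2\,dr = \sum_n \|f_n\|_2^2$ (left-invariance of $m$), and symmetrically the $t$-side is bounded by $\sum_n\|g_n\|_2^2$. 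Taking the infimum over representations of $h$ then yields $\nor{w_N}_{\frak S}\le \nor h_t$, uniformly in $N$ and in the partition.

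The main obstacle is the passage to the limit: one must show the approximants $w_N$ (refining the partition and letting $N\to\infty$, and exhausting $G$ by compacta) converge to $\tilde h^\pi_{i,j}$ in a topology strong enough to conclude $\tilde h^\pi_{i,j}\in\frak S(G)$ with $\nor{\tilde h^\pi_{i,j}}_{\frak S}\le\nor h_t$. The right framework is to test against $T(G)$: since the unit ball of $\frak S(G)$ is closed in the topology of pointwise (marginally a.e.) convergence, and the $w_N$ lie in the ball of radius $\nor h_t$, it suffices to check $w_N \to \tilde h^\pi_{i,j}$ marginally almost everywhere along a suitable subnet — and the pointwise convergence $w_N(s,t)\to \tilde h^\pi_{i,j}(s,t)$ for each fixed $(s,t)$ follows from Proposition \ref{rem2} (absolute convergence of the series in $n$, uniformly enough to pass to the Riemann-sum limit in $r$) together with dominated convergence, the dominating function being the integrable $r\mapsto \sum_n |f_n(sr)g_n(tr)|$ supplied by that same proposition. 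Once both the uniform norm bound and the pointwise convergence are in hand, closedness of the ball gives the conclusion. I would also double-check the weak-integral conventions in \eqref{star} are consistent with this pointwise/Riemann-sum description of the matrix coefficients $\tilde h^\pi_{i,j}(s,t) = (\tilde h^\pi(s,t) e_j, e_i)$, which is immediate since $u^\pi_{i,j}(sr) = (\pi(sr)e_j,e_i)$ and the weak integral defining $\tilde h^\pi(s,t)$ is pinned down by its matrix coefficients against the fixed basis.
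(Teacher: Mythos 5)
Your proposal takes a genuinely different route from the paper, and it contains a real gap. The paper's proof is algebraic: for an elementary tensor $h=f\otimes g$, a change of variables in the defining integral identifies $\tilde h^\pi_{i,j}$ with $N(\check\phi)$ where $\phi=(fu^\pi_{i,j})*\check g\in A(G)$; membership in $\frak{S}(G)$ and the bound $\nor{\tilde h^\pi_{i,j}}_{\frak{S}}\le\nor{f}_2\nor{g}_2$ then drop out of Theorem \ref{spro} together with the contractive embedding $A(G)\hookrightarrow M^{\cb}A(G)$, and the general case follows by density of finite sums of elementary tensors, with the identification of the bounded extension as $\tilde h^\pi_{i,j}$ secured by Proposition \ref{rem2} and m.a.e.\ convergence (a step structurally similar to your closing argument). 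You instead try to exhibit $\tilde h^\pi_{i,j}$ directly as a Schur multiplier by discretising the $r$-integral into Riemann-type finite sums of elementary tensors and passing to the limit.

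The gap is in the norm control of your approximants $w_N$. For the $\frak S$-norm you need an $L^\infty$ bound of the form
\[
\esssup_s\ \sum_{\ell}m(E_\ell)\sum_{n\le N}\big|f_n(sr_\ell)\,u^\pi_{i,j}(sr_\ell)\big|^2\ \le\ \sum_n\nor{f_n}_2^2
\]
(and symmetrically in $t$), but the chain
$\sum_\ell m(E_\ell)\sum_n|f_n(sr_\ell)|^2\approx\int_G\sum_n|f_n(sr)|^2\,dr=\sum_n\nor{f_n}_2^2$
has only an \emph{approximation}, not an inequality, at the crucial step. Worse, the function $r\mapsto\sum_n|f_n(sr)|^2$ is only in $L^1(G)$, not continuous, so its Riemann sums need not converge at all, and the essential supremum in $s$ of a Riemann sum is in general not dominated by that of the integral: it can be arbitrarily large on a set of positive measure of $s$. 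So the uniform bound $\nor{w_N}_{\frak S}\le\nor{h}_t$ is not established, and without it the closed-ball/pointwise-limit argument does not apply. To repair this route one would have to either approximate $f_n,g_n$ by compactly supported continuous functions and run a careful double-limit argument, or invoke a continuous (integral) version of the Haagerup-tensor representation of $\frak{S}(G)$; neither is free, whereas the paper's identification $\tilde h^\pi_{i,j}=N(\check\phi)$ with $\check\phi\in A(G)$ sidesteps the issue entirely.
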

\begin{proof}
When $h=f\ot g$ is an elementary tensor, (\ref{star}) gives
\[
\tilde h_{i,j}^{\pi}(s,t)=\int_G f(x)u_{i,j}^\pi(x)g(ts\an x)dx
=\phi(st\an),
\]
where $\phi = (fu_{i,j}^\pi)*\check g\in A(G)$. Thus,
\[\tilde h_{i,j}^{\pi}(s,t) = \check\phi(ts\an)=(N\check\phi)(s,t)\]
and so, since $\check\phi\in A(G)\subseteq M^{\cb}A(G)$,
Theorem \ref{spro} shows that  $\tilde h_{i,j}^{\pi}$ is in $\frak S(G)$
and that
$\nor{\tilde h_{i,j}^{\pi}}_{\frak{S}}=\nor{\check\phi}_{M^{\cb}A(G)}.$
Now $A(G)$ embeds contractively in $M^{\cb}A(G)$ \cite{deCH} and so

\[
\nor{\check\phi}_{M^{\cb}A(G)}\le\nor{\check\phi}_{A(G)}
=\nor{\phi}_{A(G)}\le \nor{fu_{i,j}^{\pi}}_2\nor{g}_2.
\]
Thus,
\begin{align*}
\nor{\tilde h_{i,j}^\pi}_{\frak S} \leq \nor{fu_{i,j}^{\pi}}_2\nor{g}_2\leq
\nor{f}_2\nor{g}_2= \nor{h}_t.
\end{align*}
The same inequality holds for linear combinations  $\sum^N_{n=1} f_n\otimes g_n$,
and hence the linear operator
$\Phi$ given by $\Phi(h) = \tilde h_{i,j}^{\pi}$, defined on the
algebraic tensor product $L^2(G)\otimes L^2(G)$
extends to a bounded operator
$\Phi : T(G)\to\frak S(G)$;
clearly, $\nor{\Phi(h)}_{\frak{S}}\le\nor{h}_t$.

Now let $h=\sum_{i=1}^{\infty} f_i\ot g_i$ be an arbitrary
element of $T(G)$, and set
$h_n = \sum_{i=1}^n f_i\ot g_i$, $n\in \bb{N}$;
we show that $\Phi(h) = \tilde h_{i,j}^\pi$.
Since $h_n\to_{n\to \infty} h$ in $T(G)$, we have that
$\Phi(h_n)\to_{n\to \infty} \Phi(h)$ in $\frak{S}(G)$ and hence
$\Phi(h_n) \chi_{L\times L} \to \Phi(h)\chi_{L\times L}$ in $T(G)$
for every compact set $L\subseteq G$.
By \cite[Lemma 2.1]{st1}, a subsequence of
$(\Phi(h_n) \chi_{L\times L})_{n\in \bb{N}}$ converges m.a.e. to
$\Phi(h)\chi_{L\times L}$.

On the other hand, Proposition \ref{rem2} shows that
$\Phi(h_n) \to_{n\to \infty} \tilde h_{i,j}^\pi$ pointwise.
It follows that $\Phi(h) = \tilde h_{i,j}^\pi$.
\end{proof}

The proof of the following lemma,
which  follows readily from the definitions, is left to the reader.

\begin{lemma}\label{l_tr}
(i) \ If $h\in \frak{S}(G)$ then $h_r \in \frak{S}(G)$ and
$\|h_r\|_{\frak{S}} = \|h\|_{\frak{S}}$.

(ii) If $h\in T(G)$ then $h_r \in T(G)$ and $\|h_r\|_t \leq \Delta(r)^{-1} \|h\|_t$.
\end{lemma}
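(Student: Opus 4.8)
The plan is to prove both assertions by pushing the relevant (right) translation directly through a representing series of $h$, and to control the resulting quantities using only the way right translation acts on $L^\infty(G)$-norms (for (i)) and on the integral via the modular function (for (ii)).

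For part (i), fix a representation $h = \sum_{k=1}^\infty a_k \ot b_k$ with $(a_k) \subseteq L^\infty(G)$, $(b_k) \subseteq L^\infty(G)$ and $\esssup_x \sum_k |a_k(x)|^2 < \infty$, $\esssup_y \sum_k |b_k(y)|^2 < \infty$, recalling that $\|h\|_{\frak S}$ is the infimum, over all such representations, of the product of the square roots of these two quantities. The map $(s,t) \mapsto (sr,tr)$ is a Borel bijection of $G \times G$ preserving the product measure class, hence carries marginally null sets to marginally null sets; therefore $h_r(s,t) = \sum_k a_k(sr) b_k(tr)$ marginally almost everywhere, i.e. $h_r = \sum_k a_k^r \ot b_k^r$ with $a_k^r(s) = a_k(sr)$ and $b_k^r(t) = b_k(tr)$. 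Since $s \mapsto sr$ is a bijection of $G$ preserving null sets, $\esssup_s \sum_k |a_k^r(s)|^2 = \esssup_x \sum_k |a_k(x)|^2$ and likewise for the $b_k^r$; hence $h_r \in \frak S(G)$ and $\|h_r\|_{\frak S} \le \|h\|_{\frak S}$. Applying this with $h_r$ in place of $h$ and $r\an$ in place of $r$, and noting $(h_r)_{r\an} = h$, gives the reverse inequality, so $\|h_r\|_{\frak S} = \|h\|_{\frak S}$.

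For part (ii), take a representation $h = \sum_{i=1}^\infty f_i \ot g_i$ with $\sum_i \|f_i\|_2^2 < \infty$ and $\sum_i \|g_i\|_2^2 < \infty$. As before $h_r = \sum_i f_i^r \ot g_i^r$ marginally a.e., where $f_i^r(s) = f_i(sr)$ and $g_i^r(t) = g_i(tr)$. The change of variables $\int_G \phi(sr)\,ds = \Delta(r)\an \int_G \phi(s)\,ds$ gives $\|f_i^r\|_2^2 = \Delta(r)\an \|f_i\|_2^2$ and $\|g_i^r\|_2^2 = \Delta(r)\an \|g_i\|_2^2$, so both square-summability conditions persist and $h_r \in T(G)$, with
\[
\|h_r\|_t \le \Big(\sum_i \|f_i^r\|_2^2\Big)^{1/2}\Big(\sum_i \|g_i^r\|_2^2\Big)^{1/2} = \Delta(r)\an \Big(\sum_i \|f_i\|_2^2\Big)^{1/2}\Big(\sum_i \|g_i\|_2^2\Big)^{1/2}.
\]
Taking the infimum over all representations of $h$ yields $\|h_r\|_t \le \Delta(r)\an \|h\|_t$.

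The only points that need a moment's care — and neither is a genuine obstacle — are checking that $(s,t) \mapsto (sr,tr)$ preserves marginal nullity, so that $h_r$ is well defined and the translated series actually represents it, and keeping track of the direction of the modular function: it enters as $\Delta(r)\an$ in (ii), whereas in (i), where essential suprema are insensitive to the Radon--Nikodym factor, no modular term appears.
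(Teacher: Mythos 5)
Your proof is correct, and it is precisely the argument the authors have in mind when they say the lemma ``follows readily from the definitions'' and leave the verification to the reader. Pushing right translation through a representing series, using that $s\mapsto sr$ preserves Haar-null sets (it scales Haar measure by the positive constant $\Delta(r)$) so essential suprema are unaffected for (i), using the change-of-variables formula $\int_G\phi(sr)\,ds=\Delta(r)^{-1}\int_G\phi(s)\,ds$ for (ii), and getting equality in (i) from $(h_r)_{r^{-1}}=h$, is exactly the intended routine check. One small remark: the equivalence between the projective-norm formula $\inf\sum_i\|f_i\|_2\|g_i\|_2$ and the formula $\inf\bigl(\sum_i\|f_i\|_2^2\bigr)^{1/2}\bigl(\sum_i\|g_i\|_2^2\bigr)^{1/2}$ that you use in (ii) is standard (rescale $f_i\mapsto c_if_i$, $g_i\mapsto c_i^{-1}g_i$ with $c_i^2=\|g_i\|_2/\|f_i\|_2$), and the paper itself relies on it implicitly, e.g.\ in the proof of Proposition \ref{rem2}, so there is no gap.
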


We do not know if $h^\pi_{i,j}$ always defines a Schur multiplier;  however, 
it suffices for our purposes to show that its restriction to a compact set does
define  a Schur multiplier; this is done in Lemma \ref{hschur}. In
Lemma  \ref{neol} we express this restriction in terms of $\tilde h^\pi_{k,j}$. 

\medskip

We thank the referee for the following remark.

\begin{remark}\label{remK}
If $h\in\frak S(G)$ is compactly supported, say $\supp h\subset K\times K$
where  $K\subseteq G$ is compact, then $h\in T(G)$.  
\end{remark}
\begin{proof} 
Indeed, given $\varepsilon>0$, writing 
$h=\sum_n\varphi_n\ot\psi_n$ with 
$$\nor{\sum_n  |\varphi_n|^2}_\infty\nor{\sum_n|\psi_n|^2}_\infty < (\|S_h\|+\varepsilon)^2,$$ 
we have 
\begin{align*}
\|h\|^2_t &\leq 
\left(\sum_n\|\varphi_n\|_2\|\psi_n\|_2\right)^2 
\le
\left(\sum_n \int_K|\varphi_n|^2\right)
\left(\sum_n \int_K|\psi_n|^2\right) \\ &= 
\left(\int_K\sum_n |\varphi_n|^2\right)
\left(\int_K\sum_n |\psi_n|^2\right) \\ 
&\leq m(K)^2\nor{\sum_n  |\varphi_n|^2}_\infty\nor{\sum_n|\psi_n|^2}_\infty
=m(K)^2(\|S_h\|+\varepsilon)^2 .
\end{align*}
\end{proof}

For the next lemma, recall (see for example \cite{blesmi} or \cite[Section 3]{spronk})
that $\frak S(G)$ can be identified with the
weak* Haagerup tensor product $L^\infty(G)\otimes^{w^*h} L^\infty(G)$
which coincides with the dual of the Haagerup tensor product
$L^1(G)$ $\otimes^h L^1(G)$, the duality being given by

\begin{equation}\label{eq_ind}
\du{w}{f\ot g} = \iint w(s,t)f(s)g(t)dsdt, \ \ w\in\frak S(G), \ f, g\in L^1(G).
\end{equation}

\begin{lemma}\label{hrcts}
Let $L\subseteq G$ be a compact set.
If $h\in\frak S(G)$ is supported in a compact set $K\times K$,
then the function  $r\to \du{(\chi_{L\times L}h_r)}{\omega}$
is continuous for every $\omega\in L^1(G)\ot^h L^1(G) $.
\end{lemma}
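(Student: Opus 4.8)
The plan is to reduce the claim to the continuity of an integrated expression and then invoke the Hilbert--Schmidt / trace‑class description of the duality in \eqref{eq_ind}. First I would unwind the definitions: for $\omega = f\otimes g$ with $f,g\in L^1(G)$ we have, by \eqref{eq_ind},
\[
\du{\chi_{L\times L}h_r}{f\ot g} = \iint_{L\times L} h(sr,tr)\,f(s)g(t)\,ds\,dt .
\]
Since the map $r\mapsto \du{\cdot}{\omega}$ is linear in $\omega$ and continuous in $\omega$ (uniformly in $r$, because $\|\chi_{L\times L}h_r\|_{\frak S}\le \|h_r\|_{\frak S}=\|h\|_{\frak S}$ by Lemma \ref{l_tr}(i)), it suffices to prove continuity for $\omega$ in the dense subset spanned by elementary tensors $f\ot g$ with $f,g\in L^1(G)$; in fact, by a further density argument inside $L^1(G)$, I may even assume $f,g\in C_c(G)$.

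Next I would make the change of variables that turns the translation in $h$ into a translation of the ``test'' functions. Because $h$ is supported in $K\times K$, the integrand $h(sr,tr)f(s)g(t)$ is supported, for $r$ ranging over the fixed compact set $L$, in a fixed compact set in $(s,t)$; more precisely $h(sr,tr)\ne 0$ forces $sr, tr\in K$, i.e. $s\in Kr^{-1}$, $t\in Kr^{-1}$, which for $r\in L$ lies in the compact set $KL^{-1}$. Substituting $s\mapsto sr^{-1}$, $t\mapsto tr^{-1}$ (with Jacobian factors $\Delta(r)^{-1}$ each) gives
\[
\du{\chi_{L\times L}h_r}{f\ot g}
= \Delta(r)^{-2}\iint_{K\times K} h(s,t)\,\chi_{L}(sr^{-1})f(sr^{-1})\,\chi_{L}(tr^{-1})g(tr^{-1})\,ds\,dt .
\]
Now $h\in\frak S(G)$ supported in $K\times K$ lies in $T(G)$ by Remark \ref{remK}, so $h$ is (marginally) an $L^2(K)\hat\otimes L^2(K)$ function and in particular $h\in L^2(K\times K)$. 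Writing the right‑hand side as $\Delta(r)^{-2}\du{h}{\Phi(r)}_{L^2(K\times K)}$ with $\Phi(r)(s,t) = \overline{\chi_L(sr^{-1})f(sr^{-1})\,\chi_L(tr^{-1})g(tr^{-1})}$, the continuity of $r\mapsto \du{\chi_{L\times L}h_r}{f\ot g}$ follows once I show that $r\mapsto \Phi(r)$ is norm‑continuous $G\to L^2(K\times K)$ and that $\Delta$ is continuous (which it is). The latter reduces to the standard fact that left translation $r\mapsto L_r\psi$ is continuous from $G$ into $L^2(G)$ for $\psi\in L^2(G)$, applied to $\psi = \chi_L\cdot f$ and $\psi=\chi_L\cdot g$ (both in $L^2(G)$ since $f,g\in C_c(G)$), together with continuity of multiplication in $L^2$ against a function bounded on the relevant compact set.

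The main obstacle is bookkeeping with the modular function and with making the ``fixed compact set'' claim rigorous so that the $L^2$ inner‑product representation is legitimate — one must be careful that for $r$ in a \emph{neighbourhood} of a given $r_0\in L$ the supports of $\Phi(r)$ stay inside one compact set, so that an $L^\infty$–$L^1$ or $L^2$–$L^2$ bound can be applied uniformly. Once the support is controlled, everything is a routine application of strong continuity of translation; I would absorb the $\Delta(r)^{-2}$ factor at the end using that $\Delta$ is a continuous homomorphism, and extend from $C_c(G)\otimes C_c(G)$ to all of $L^1(G)\ot^h L^1(G)$ by the uniform bound $\|\chi_{L\times L}h_r\|_{\frak S}\le\|h\|_{\frak S}$ noted above, which controls the functional $\du{\chi_{L\times L}h_r}{\cdot}$ uniformly in $r$ in Haagerup norm.
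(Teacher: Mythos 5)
Your proof is correct and follows essentially the same route as the paper: reduce to elementary tensors using the uniform bound $\|\chi_{L\times L}h_r\|_{\frak S}\le\|h\|_{\frak S}$, change variables so that the translation acts on the test functions, and invoke strong continuity of translation. The one place you diverge is in the final pairing: you appeal to Remark \ref{remK} to put $h$ into $L^2(K\times K)$ and then further restrict to $f,g\in C_c(G)$ so that $\Phi(r)\in L^2(K\times K)$; the paper avoids both of these extra steps by simply using the $L^\infty$--$L^1$ pairing, since $h\in\frak S(G)$ is automatically bounded and the translated cut-offs $s\mapsto\chi_L(sr^{-1})f(sr^{-1})\Delta(r^{-1})$ converge to $\chi_Lf$ in $L^1(G)$ as $r\to e$. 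Both versions are sound; the paper's choice of duality is marginally shorter because it sidesteps the $C_c(G)$ reduction.
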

\begin{proof}
 For  $\omega=f\ot g$ where $f,g\in L^1(G)$, we have
\begin{align*}
&\du{(\chi_{L\times L}h_r)}{\omega} = \iint \chi_L(s)\chi_L(t) h(sr,tr)f(s)g(t)dsdt \\
& = \iint
 \chi_L(sr^{-1}) \chi_L(tr^{-1}) h(s,t) \Delta(r)^{-1}f(sr^{-1})\Delta(r)^{-1}g(tr^{-1})dsdt
\end{align*}
But, as $r\to e$, the function $s\to \chi_L(s r^{-1})f(s r^{-1})\Delta(r^{-1})$
tends to  $\chi_L f$  in the norm of $L^1(G)$; similarly for $\chi_L g$.
Therefore, since $h$ is bounded,
\[\du{(\chi_{L\times L}h_r)}{\omega}
\to_{r\to e} \iint \chi_L(s)h(s,t)\chi_L(t)f(s)g(t)dsdt.\]
It follows that $r\to \du{(\chi_{L\times L}h_r)}{\omega}$ is continuous
for every finite sum  $\omega=\sum f_n\ot g_n$. Since 
such elements $\omega$ form a
dense subset of $L^1(G)\otimes^h L^1(G)$, and
$\|h\|_\frak{S}=\|h_r\|_\frak{S}$ for all $r\in G$ (Lemma \ref{l_tr} (i)),
the conclusion follows.
\end{proof}

Suppose $h\in\frak S(G)$ is supported in a compact set $K\times K$
 and let $u:G\to\bb C$
be bounded and continuous.
For a compact set $L\subseteq G$, let
\begin{align}\label{int}
w_{h,L}(s,t) &= \chi_{L\times L}(s,t) \int_Gh(sr,tr)u(r)dr\\
& =
\chi_{L\times L}(s,t)\int_Gh(sr,tr)u(r)\chi_{L\an K}(r)dr \nonumber
\end{align}
(the second equality follows from the fact that
$sr\in K$ forces $r\in s\an K\subseteq L\an K$ if $s\in L$).
In the next lemma, we show that $w_{h,L}\in \frak{S}(G)$.
First note that, since $h\in\frak{S}(G)$, 
the function
$r\to \du{(\chi_{L\times L}h_r)}{\omega}$ is bounded
and hence the integral
$\int \du{(\chi_{L\times L}h_r)}{\omega}u(r)\chi_{L\an K}(r)dr$
exists.

\begin{lemma} \label{hschur} If 
$h\in\frak S(G)$ is compactly supported, then
for every compact $L\subseteq G$
and every bounded continuous function $u:G\to\bb C$, the function
$w_{h,L}$ defined in (\ref{int})  is a Schur multiplier.
In particular, $\chi_{L\times L}h^\pi_{i,j}$ is a Schur multiplier.
\end{lemma}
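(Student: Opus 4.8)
The plan is to realize $w_{h,L}$ as a weak* limit (or a weakly convergent integral) of translates of $h$ cut down to $L\times L$, and to use the fact that $\frak S(G)$ is a dual space with a norm-bounded criterion for membership. First I would note that, by Lemma \ref{l_tr}(i), each $h_r$ lies in $\frak S(G)$ with $\|h_r\|_{\frak S}=\|h\|_{\frak S}$, so that $\chi_{L\times L}h_r$ lies in $\frak S(G)$ with $\|\chi_{L\times L}h_r\|_{\frak S}\le\|h\|_{\frak S}$ (cutting by the characteristic function of a measurable rectangle is a contraction on $\frak S(G)$, since $\chi_{L\times L}=\chi_L\otimes\chi_L$ corresponds to compression by the projections $M_{\chi_L}$). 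Since $\frak S(G)$ is the dual of $L^1(G)\otimes^h L^1(G)$ via the pairing (\ref{eq_ind}), I would define $w_{h,L}$ as the element of this dual given by
\[
\du{w_{h,L}}{\omega}=\int_G \du{\chi_{L\times L}h_r}{\omega}\,u(r)\chi_{L^{-1}K}(r)\,dr,\qquad \omega\in L^1(G)\otimes^h L^1(G),
\]
which is a well-defined bounded functional because the integrand is, by Lemma \ref{hrcts}, a bounded (indeed continuous) function of $r$ supported on the compact set $L^{-1}K$, and because $|\du{\chi_{L\times L}h_r}{\omega}|\le\|h\|_{\frak S}\|\omega\|_h$. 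Its norm is at most $\|u\|_\infty\, m(L^{-1}K)\,\|h\|_{\frak S}$, so this functional is weak* continuous on $\frak S(G)$-norm-bounded sets and hence corresponds to an element of $\frak S(G)$, i.e.\ a genuine Schur multiplier.

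The second task is to check that this element of $\frak S(G)$ is represented, as a function on $G\times G$, by the formula (\ref{int}). For this I would unwind the pairing on elementary tensors $\omega=f\otimes g$: Fubini (justified by the compact support in $r$ and the integrability estimates from Proposition \ref{rem2}, applied after writing $\chi_{L\times L}h_r$ as a function) gives
\[
\du{w_{h,L}}{f\otimes g}=\iint\Big(\chi_{L\times L}(s,t)\int_G h(sr,tr)u(r)\,dr\Big)f(s)g(t)\,ds\,dt,
\]
which says precisely that the function $(s,t)\mapsto w_{h,L}(s,t)$ defined in (\ref{int}) is a version of the multiplier just produced. The interchange of the $r$-integral with the $(s,t)$-integral is the only delicate measure-theoretic point; it is handled by noting that $h\in\frak S(G)$ is bounded (a Schur multiplier is in particular a bounded function up to marginal equivalence) and supported in $K\times K$, so the double integrand is dominated by $\|h\|_\infty\|u\|_\infty\chi_L(s)\chi_L(t)\chi_{L^{-1}K}(r)|f(s)g(t)|$, which is integrable on $G^3$.

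Finally, the last sentence of the lemma follows by specialization: take $u=u^\pi_{i,j}$, which is a bounded continuous function on $G$ (a coefficient of a unitary representation), and compare (\ref{int}) with the definition (\ref{star}) of $h^\pi_{i,j}$ — on $L\times L$ they agree, so $\chi_{L\times L}h^\pi_{i,j}=w_{h,L}\in\frak S(G)$. I expect the main obstacle to be the rigorous justification that the vector-valued (weak) integral defining $w_{h,L}$ lands back inside $\frak S(G)$ rather than merely in the bidual of $L^1(G)\otimes^h L^1(G)$ or in some larger function space; this is exactly where the compact support of $h$ (hence of the $r$-integration) and the norm bound $\|\chi_{L\times L}h_r\|_{\frak S}\le\|h\|_{\frak S}$ are essential, together with the weak* continuity supplied by Lemma \ref{hrcts}.
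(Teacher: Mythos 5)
Your proposal is correct and follows essentially the same route as the paper: you define the functional $\omega\mapsto\int\du{\chi_{L\times L}h_r}{\omega}u(r)\chi_{L^{-1}K}(r)\,dr$ on $L^1(G)\otimes^h L^1(G)$, bound it using Lemmas \ref{l_tr}(i) and \ref{hrcts} to get an element of $(L^1(G)\otimes^h L^1(G))^*=\frak S(G)$, and then identify it a.e. with the function $w_{h,L}$ by evaluating on elementary tensors $f\otimes g$ and applying Fubini over the compact $r$-support. The only cosmetic differences are that the weak*-continuity remark is unnecessary (a bounded functional is automatically an element of the dual) and that the paper justifies Fubini by compactness of the $r$-domain alone rather than invoking Proposition \ref{rem2}.
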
 
\begin{proof}
Suppose  $h$ is supported in a compact set $K\times K$.
By Lemmas \ref{l_tr}(i) and \ref{hrcts}, 
the linear mapping
\[
L^1(G)\otimes^h L^1(G)\to\bb C:
\omega \rightarrow \int \du{(\chi_{L\times L}h_r)}{\omega}u(r)\chi_{L\an K}(r)dr
\]
is bounded with norm not exceeding
$\nor{h}_\frak{S}\nor{u}_\infty m(L\an K)$; hence it
defines an element $v_{h,L} \in (L^1(G)\otimes^h L^1(G))^* = \frak{S}(G)$,
that is 
\begin{align} \label{666}
\du{v_{h,L}}{\omega} =   \int_G\du{(\chi_{L\times L}h_r)}{\omega}u(r)\chi_{L\an K}(r)dr ,
\end{align}
for all $\omega\in L^1(G)\otimes^h L^1(G)$.

We will show that
$v_{h,L} = w_{h,L}$ almost everywhere.

By (\ref{eq_ind}) and (\ref{666}),
if $\omega = f\ot g$ with $f,g\in L^1(G)$ then, 
applying Fubini's theorem (note that the integration with respect to $r$
is over a compact set), we obtain
\begin{align*}
& \iint\! \!v_{h,L}(s,t)f(s)g(t)dsdt = \du{v_L}{\omega}
=  \int\du{(\chi_{L\times L}h_r)}{\omega}u(r)\chi_{L\an K}(r)dr \\
&=\! \int\!\!\left(\iint\!(\chi_{L\times L}h_r)(s,t) f(s)g(t)dsdt\!\right) u(r)\chi_{L\an K}(r)dr\\
&=\!\iint\!\!\left(\int\!(\chi_{L\times L}h_r)(s,t) u(r)\chi_{L\an K}(r)dr \right)f(s)g(t)dsdt
\\ &=\!\iint\!w_{h,L}(s,t)f(s)g(t)dsdt.
\end{align*}

This shows that the function $v_{h,L} - w_{h,L}$
annihilates the algebraic tensor product
$L^1(G)\ot L^1(G)$, hence all of  $L^1(G\times G)$. It follows that it is zero as
an element of $L^\infty(G\times G)$.
\end{proof}

\begin{corollary}\label{c_wi}
Suppose $h\in\frak S(G)$ is compactly supported.
For every continuous bounded function $u:G\to\bb C$,
every  $T\in \cl B(L^2(G))$ and every $f,g\in L^2(G)$  supported in a compact set $L\subseteq G$ we have
\begin{align*}
(S_{w_{h,L}}(T)f,g) = \int_G u(r) (S_{h_r}(T)f,g) dr,
\end{align*}
where $w_{h,L}$ is as in (\ref{int}). In particular, \begin{align*}
(S_{h^{\pi}_{i,j}\chi_{L\times L}}(T)f,g) = \int_G u^{\pi}_{i,j}(r) (S_{h_r}(T)f,g) dr
 \end{align*}\label{l_peq}
for all $\pi\in \widehat G$ and all $i,j\in\bb N_\pi$.
\end{corollary}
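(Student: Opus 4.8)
The plan is to deduce Corollary~\ref{c_wi} directly from the dual pairing identity (\ref{666}) that defines $w_{h,L}$, together with the formula for the Hilbert--Schmidt part of a Schur multiplier action. First I would reduce to the rank-one case: it suffices to prove the identity when $T = T_k$ is a Hilbert--Schmidt operator with $k\in L^2(G\times G)$ compactly supported (or even $k = \bar g' \otimes f'$ an elementary tensor), since such operators are weak* dense in $\cl B(L^2(G))$, and both sides of the claimed equation are weak* continuous in $T$ --- the left-hand side because $S_{w_{h,L}}$ is the dual of $m_{w_{h,L}}$ on $T(G)$, and the right-hand side because $r\mapsto S_{h_r}(T)$ is, after pairing against the fixed element of $T(G)$ built from $f,g$, an integral of weak*-continuous-in-$T$ functions dominated (via Lemma~\ref{l_tr}(i) giving $\|S_{h_r}\| = \|h\|_{\frak S}$ and the compactness of the effective domain of integration $L^{-1}K$) by an integrable function of $r$.

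Next I would unwind both sides in the rank-one case. For $f,g\in L^2(G)$ supported in the compact set $L$, and $T$ a bounded operator, set $\omega := \bar{g}\otimes f \in T(G)$ (so that $\langle S_w(T), \omega\rangle_t = (S_w(T)f,g)$ for any $w\in\frak S(G)$, using the defining pairing $\langle T,\omega\rangle_t = (Tf,g)$ and the duality $\langle S_w(T),h\rangle_t = \langle T, wh\rangle_t$). Because $f$ and $g$ are supported in $L$, we have $\omega = \chi_{L\times L}\,\omega$ marginally, so $\langle S_{w_{h,L}}(T),\omega\rangle_t = \langle S_{w_{h,L}}(T), \chi_{L\times L}\omega\rangle_t$. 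Now I want to run (\ref{666}) with a suitable test element; the subtlety is that (\ref{666}) pairs $v_{h,L}=w_{h,L}$ against elements of $L^1(G)\otimes^h L^1(G)$ via the Haagerup duality, whereas here I am pairing $S_{w_{h,L}}(T)$ against an element of $T(G)$ via the $\langle\cdot,\cdot\rangle_t$ duality. So the actual step is to transport the identity from the level of Schur multipliers to the level of their dual maps $S_w$: the relation $\langle S_{w}(T),\omega\rangle_t = \langle T, w\omega\rangle_t$ together with the pointwise formula $w_{h,L}(s,t)\omega(s,t) = \chi_{L\times L}(s,t)\bigl(\int_G h(sr,tr)u(r)\,dr\bigr)\bar g(s)f(t)$ lets one write, by Fubini on the compact $r$-domain $L^{-1}K$ (which is legitimate since $h$ is a bounded compactly supported function and $\omega\in L^1$ on that region),
\[
\langle T, w_{h,L}\,\omega\rangle_t = \int_G u(r)\,\langle T, \chi_{L\times L}\,h_r\,\omega\rangle_t\,dr
= \int_G u(r)\,\langle S_{h_r}(T), \omega\rangle_t\,dr,
\]
where in the last step I again use $\langle T, h_r\omega\rangle_t = \langle S_{h_r}(T),\omega\rangle_t$ (valid by Lemma~\ref{l_tr}(i), $h_r\in\frak S(G)$) and $\omega = \chi_{L\times L}\omega$. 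Rewriting the two ends in terms of inner products gives exactly $(S_{w_{h,L}}(T)f,g) = \int_G u(r)(S_{h_r}(T)f,g)\,dr$. The particular case then follows by taking $u = u^\pi_{i,j}$, which is bounded and continuous (a coefficient of a unitary representation), so that $w_{h,L} = \chi_{L\times L}h^\pi_{i,j}$ by the last sentence of Lemma~\ref{hschur}, with the convention $h^\pi_{i,j} = h^\pi_{i,j}$ as in (\ref{star}).

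The main obstacle I anticipate is the bookkeeping of the two different dualities: making sure the Fubini interchange is applied at the right level --- either consistently at the level of the bilinear pairing $\langle\cdot,\cdot\rangle_t$ with $\omega$ a genuine $L^1$ (indeed $T(G)$) element on the compact set $L\times L$, or by pulling back through (\ref{666}). One clean way to avoid re-proving Fubini is: for the rank-one $T = T_k$ with compactly supported $k\in L^2(G\times G)$, use the identity $S_{w}(T_k) = T_{w^\flat k}$ (stated in the Preliminaries) to turn both sides into explicit double integrals, reducing the corollary to the scalar identity $w_{h,L}^\flat(x,y) = \int_G h_r^\flat(x,y) u(r)\,dr$ on $L\times L$, which is just (\ref{int}) read with the flip; then remove the Hilbert--Schmidt restriction by the weak* density and dominated convergence argument of the first paragraph. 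Either route is routine once the domination bound $\|S_{h_r}(T)\|\le \|h\|_{\frak S}\|T\|$ and the restriction of the $r$-integral to the compact set $L^{-1}K$ are in hand; these are exactly what Lemmas~\ref{l_tr}(i) and \ref{hschur} (via the second line of (\ref{int})) provide.
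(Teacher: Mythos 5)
Your proposal matches the paper's proof: the paper likewise treats the Hilbert--Schmidt case $T=T_k$ explicitly via the formula $S_w(T_k)=T_{w^\flat k}$ and Fubini on the compact set $L^{-1}K$, and then passes to general $T$ by weak* approximation by Hilbert--Schmidt operators together with dominated convergence, the domination coming from $\|h_r\|_{\frak S}=\|h\|_{\frak S}$ (Lemma~\ref{l_tr}(i)) and the finiteness of $m(L^{-1}K)$. One small slip in your first route: with the paper's pairing $\du{T}{f_1\otimes g_1}_t=(Tf_1,\bar g_1)$, the element of $T(G)$ giving $(Tf,g)$ is $\omega=f\otimes\bar g$ rather than $\bar g\otimes f$, but this does not affect the argument.
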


\begin{proof}
Let $K\subseteq G$ be a compact set such that 
$\supp h\subseteq K\times K$.
Suppose first that  $T = T_k$ is a Hilbert-Schmidt operator
(here $k \in L^2(G\times G)$).
Using Fubini's theorem, we have
\begin{align*}
(S_{w_{h,L}}(T_k)f,g) & =
 \int_{G\times G} w_{h,L}(s,t) k(t,s)f(s) \overline{g(t)} dsdt \\
& =
 \int_{G\times G} \left(\int_{L\an K} h(sr,tr) u(r)dr\right)k(t,s) f(s)
\overline{g(t)}  dsdt\\
& =  \int_{L\an K} u(r)\left(\int_{G\times G} h(sr,tr) k(t,s)f(s)
\overline{g(t)}dsdt\right)  dr\\
& =  \int_{L\an K} u(r) (S_{h_r}(T_k)f,g) dr\, .
\end{align*}
\noindent If $T$ is arbitrary, let $(T_n)$ be a sequence of Hilbert-Schmidt operators 
(with operator norms uniformly bounded by $\|T\|$)
such that $T_n\rightarrow T$ in the weak* topology.
Then
\[
(S_{w_{h,L}}(T_n)f,g)\rightarrow (S_{w_{h,L}}(T)f,g)
\]
by the weak* continuity of $S_{w_{h,L}}$.
On the other hand, since
\[
|(S_{h_r}(T_n)f,g)|\leq \|T_n\|\|f\|_2\|g\|_2 \|h_r\|_{\frak{S}}
\leq \|T\|\|f\|_2\|g\|_2 \|h\|_{\frak{S}} 
\]
and $L^{-1}K$ has finite Haar measure,
the Lebesgue Dominated Convergence Theorem implies that
\[
\int_G u(r) (S_{h_r}(T_n)f,g) dr\rightarrow \int_G u(r) (S_{h_r}(T)f,g) dr.
\]
The conclusion follows.
\end{proof}

\begin{lemma}\label{l_claim} 
If $h\in T(G)$ then 
\begin{align}\label{cl}
\sum_{k\in\bb N_{\pi}}\nor{\tilde h^\pi_{k,j}}_{\frak{S}}^2\le \nor{h}_t^2.
\end{align}
\end{lemma}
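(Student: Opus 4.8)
The plan is to reduce the general case to the elementary-tensor case, exactly as in the proof of Lemma \ref{tild}. First I would treat $h=f\ot g$. From the computation in the proof of Lemma \ref{tild}, $\tilde h_{k,j}^\pi(s,t)=\check\phi_k(ts\an)$ where $\phi_k=(fu_{k,j}^\pi)*\check g\in A(G)$, and $\|\tilde h_{k,j}^\pi\|_{\frak S}=\|\check\phi_k\|_{M^{\cb}A(G)}\le\|\phi_k\|_{A(G)}\le\|fu_{k,j}^\pi\|_2\|g\|_2$. Hence
\[
\sum_{k\in\bb N_\pi}\|\tilde h_{k,j}^\pi\|_{\frak S}^2
\le \|g\|_2^2\sum_{k\in\bb N_\pi}\|fu_{k,j}^\pi\|_2^2
=\|g\|_2^2\int_G|f(x)|^2\sum_{k\in\bb N_\pi}|u_{k,j}^\pi(x)|^2\,dx.
\]
Now $\sum_{k\in\bb N_\pi}|u_{k,j}^\pi(x)|^2=\sum_k|\sca{\pi(x)e_j,e_k}|^2=\|\pi(x)e_j\|^2=1$ since $\pi(x)$ is unitary. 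So the right-hand side equals $\|f\|_2^2\|g\|_2^2=\|h\|_t^2$, which is (\ref{cl}) for elementary tensors.

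For finite sums $h=\sum_{n=1}^N f_n\ot g_n$ I would argue by the usual $2$-norm/Hilbert-space trick: fix a finite subset $F\subseteq\bb N_\pi$ and consider the map $h\mapsto(\tilde h_{k,j}^\pi)_{k\in F}$ into the $\ell^2(F)$-valued Schur multipliers, i.e. into $\frak S(G)\otimes\ell^2(F)$ (equivalently, regard $(\tilde h_{k,j}^\pi)_{k\in F}$ as a column and estimate its Schur norm). On elementary tensors the column Schur norm is at most $\|h\|_t$ by the previous paragraph — indeed the column with entries $\check\phi_k=N^{-1}$-images is, via Theorem \ref{spro}, the $N$-image of the column $(\check\phi_k)_k\in A(G)\otimes\ell^2(F)$, whose norm is controlled by $\big\|\sum_k|\phi_k|^2\big\|_\infty^{1/2}$-type estimates reducing again to $\|f\|_2\|g\|_2$; more simply, use that for a column $w=(w_k)$ of Schur multipliers one has $\|w\|_{\frak S,\,\mathrm{col}}^2\le\sum_k\|w_k\|_{\frak S}^2$ is \emph{false} in general, so instead I would bound $\big\|\sum_{k\in F}\tilde h_{k,j}^\pi\otimes e_k\big\|$ directly: writing $h=f\ot g$ it equals $N$ applied to $\sum_k\check\phi_k\otimes e_k=\big(\,(fu^\pi_{\cdot,j})*\check g\,\big)^{\vee}$, and $\|\sum_k\check\phi_k\otimes e_k\|_{A(G)\otimes_{\min}\ell^2}\le\|\,|f|\,(\sum_k|u^\pi_{k,j}|^2)^{1/2}\,\|_2\|g\|_2=\|f\|_2\|g\|_2$ using $\sum_k|u^\pi_{k,j}(x)|^2\le1$. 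This bound is linear-isometric on elementary tensors, hence extends to a bounded map $\Phi_F:T(G)\to\frak S(G)\otimes\ell^2(F)$ of norm $\le1$ defined on the algebraic tensor product and extended by continuity; so for finite sums $\big(\sum_{k\in F}\|\tilde h_{k,j}^\pi\|_{\frak S}^2\big)^{1/2}\le\|\Phi_F(h)\|\le\|h\|_t$.

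For general $h=\sum_{i=1}^\infty f_i\ot g_i\in T(G)$ I would pass to the limit: set $h_n=\sum_{i=1}^n f_i\ot g_i$. By the extension just described $\Phi_F(h_n)\to\Phi_F(h)$ in $\frak S(G)\otimes\ell^2(F)$, and as in the proof of Lemma \ref{tild} (using \cite[Lemma 2.1]{st1} on compact sets together with the pointwise convergence $\tilde{(h_n)}_{k,j}^\pi\to\tilde h_{k,j}^\pi$ from Proposition \ref{rem2}) one identifies the coordinates of $\Phi_F(h)$ with $\tilde h_{k,j}^\pi$. Hence $\sum_{k\in F}\|\tilde h_{k,j}^\pi\|_{\frak S}^2\le\|h\|_t^2$ for every finite $F\subseteq\bb N_\pi$, and taking the supremum over $F$ gives (\ref{cl}). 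The main obstacle is the second paragraph: one must make precise the \emph{column} Schur-multiplier norm and verify that the relevant map is contractive on elementary tensors — the clean way is to note that $(w_k)_k\mapsto$ column operator corresponds, under $N^{-1}$ and Theorem \ref{spro}, to an $\ell^2$-valued multiplier of $A(G)$, so the estimate $\sum_k|u^\pi_{k,j}(x)|^2=\|\pi(x)e_j\|^2=1$ does all the work; everything else is the same continuity/identification bookkeeping already carried out for Lemma \ref{tild}.
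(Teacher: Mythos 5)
Your elementary-tensor computation is exactly the paper's: the estimate $\nor{\tilde h^\pi_{k,j}}_{\frak S}\le\nor{u^\pi_{k,j}f}_2\nor{g}_2$ from the proof of Lemma \ref{tild}, combined with the Parseval identity $\sum_k|u^\pi_{k,j}(x)|^2=\nor{\pi(x)e_j}^2=1$. The problem is the passage from elementary tensors to finite sums, which is where your proposal has a genuine gap.

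Your plan is to define a map $\Phi_F$ into a column-type tensor product $\frak S(G)\otimes\ell^2(F)$, bound it on elementary tensors by $\nor{h}_t$, extend by continuity, and then conclude
\[
\Bigl(\sum_{k\in F}\nor{\tilde h^\pi_{k,j}}_{\frak S}^2\Bigr)^{1/2}\le\nor{\Phi_F(h)}.
\]
But this last inequality is not established, and it is false for the norms you invoke: for the column operator space norm one has the \emph{opposite} comparison $\nor{(w_k)}_{\mathrm{col}}^2=\nor{\sum_k w_k^* w_k}\le\sum_k\nor{w_k}^2$, and for $\otimes_{\min}$ one only gets $\max_k\nor{w_k}\le\nor{\sum_k w_k\otimes e_k}$, not the $\ell^2$-sum. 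So even after extending $\Phi_F$ contractively you cannot read off $\sum_k\nor{\tilde h^\pi_{k,j}}_{\frak S}^2\le\nor{h}_t^2$. (The approach could be repaired by taking the target to be the Banach-space $\ell^2$-direct sum $\bigoplus^{\ell^2}_{k\in F}\frak S(G)$ rather than a column tensor product; then the bilinear bound on elementary tensors and the universal property of the projective tensor product do give what you want, with the coordinate identification argued as in Lemma \ref{tild}. But that is not the argument you wrote.)

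The paper's proof avoids the auxiliary column map altogether and is more elementary: for a finite sum $h=\sum_l\phi_l\ot\psi_l$ one bounds $\nor{\tilde h^\pi_{k,j}}_{\frak S}\le\sum_l\nor{u^\pi_{k,j}\phi_l}_2\nor{\psi_l}_2$ by the triangle inequality, applies Cauchy--Schwarz in $l$, sums over $k$ via the Parseval identity, and takes the infimum over finite representations. The general case follows by applying the contractivity of each individual $h\mapsto\tilde h^\pi_{k,j}$ (Lemma \ref{tild}) to the partial sums $\sum_{k=1}^N$ and then letting $N\to\infty$; no vector-valued Schur-multiplier norm is needed.
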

\begin{proof}
Suppose first that $h$ is an elementary tensor, say $h = \phi\ot\psi$,
and recall that in this case
$ \nor{\tilde h^\pi_{k,j}}_{\frak{S}}\le \nor{u^\pi_{k,j}\phi}_2\nor{\psi}_2$
(see the proof of Lemma \ref{tild}). Now
\begin{eqnarray*}
\sum_k\nor{u^\pi_{i,k}\phi}^2
& = & \sum_k\int_G|u^\pi_{i,k}(s )\phi(s)|^2ds
=\int_G\sum_k|(\pi(s)e_k,e_i)|^2|\phi(s)|^2ds\\
& = & \int_G\sum_k|(e_k,\pi(s\an )e_i)|^2|\phi(s)|^2ds \\
& = & \int_G\nor{\pi(s\an )e_i}^2|\phi(s)|^2ds
=   \int_G|\phi(s)|^2ds =\nor{\phi}_2^2
\end{eqnarray*}
and so $\sum_k\nor{\tilde h^\pi_{k,j}}^2_{\frak{S}} \le \nor{h}_t^2$. \\
The same estimate persists when
$h$ is a finite sum $h=\sum_l\phi_l\ot\psi_l$: 
\begin{eqnarray*}
\sum_k\nor{\tilde h^\pi_{k,j}}^2_{\frak{S}}
& \leq & 
\sum_k\left(\sum_l\nor{u^\pi_{k,j}\phi_l}_2\nor{\psi_l}_2\right)^2 \le
\sum_l\sum_k\nor{u^\pi_{k,j}\phi_l}^2_2\sum_l\nor{\psi_l}^2_2\\
& = & \sum_l\nor{\phi_l}^2_2\sum_l\nor{\psi_l}^2_2.
\end{eqnarray*}
In particular,
$\sum\limits_{k=1}^N\|\tilde h^\pi_{k,j}\|_{\frak{S}}^2 \le  \|h\|_t^2$ 
for all $N\in\bb N_\pi$.
Since the map $h\to \tilde h^\pi_{k,j}:T(G)\to \frak S(G)$ is contractive
(see Lemma \ref{tild}), the last inequality holds for all $h\in T(G)$.
But $N$ is arbitrary, and so  (\ref{cl}) is proved for an
arbitrary $h$.
\end{proof}

We thank V. S. Shulman and L. Turowska for letting us include a proof of the  following
 lemma from an earlier version of \cite{stt}.

\begin{lemma}\label{l_unicom} 
For each $k$ and $\pi\in\hat G$, the functions
$\sum\limits_{l=m}^\infty |u_{k,l}^\pi|^2$ converge to zero uniformly
on compact sets, as $m\to\infty$.
\end{lemma}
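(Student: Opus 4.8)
The plan is to reduce the statement to Dini's theorem. First I would exploit unitarity of $\pi(r)$: for every $r\in G$ and every $k\in\bb N_\pi$,
\[
\sum_{l\in\bb N_\pi}|u^\pi_{k,l}(r)|^2=\sum_{l\in\bb N_\pi}|(\pi(r)e_l,e_k)|^2
=\sum_{l\in\bb N_\pi}|(e_l,\pi(r)^*e_k)|^2=\nor{\pi(r)^*e_k}^2=1.
\]
In particular the series $\sum_{l}|u^\pi_{k,l}(r)|^2$ converges, with sum $1$, for every $r\in G$; hence its tails $f_m:=\sum_{l\ge m}|u^\pi_{k,l}|^2$ tend to $0$ pointwise on $G$ as $m\to\infty$.

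Next I would observe that each $f_m$ is continuous. This is not obvious from the definition of $f_m$ as an infinite series, but follows from the identity above: $f_m=1-\sum_{l=1}^{m-1}|u^\pi_{k,l}|^2$, a finite sum of continuous functions (when $\bb N_\pi$ is finite, $f_m\equiv 0$ for large $m$ and there is nothing to prove). Moreover $0\le f_{m+1}\le f_m$ pointwise on $G$. Thus $(f_m)_{m}$ is a decreasing sequence of nonnegative continuous functions on $G$ converging pointwise to the continuous function $0$.

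Finally, fixing a compact set $K\subseteq G$ and applying Dini's theorem to the restrictions $f_m|_K$, I conclude that $f_m\to 0$ uniformly on $K$, which is precisely the claimed assertion.

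There is essentially no obstacle here; the only point requiring a moment's care is the continuity of each tail $f_m$, which one gets for free once the full series is identified with the constant $1$.
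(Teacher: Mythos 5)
Your proof is correct and follows essentially the same route as the paper: both arguments reduce to Dini's theorem applied to the decreasing sequence of continuous tails $f_m$ on a compact set. The only difference is cosmetic and lies in how continuity of each $f_m$ is established — the paper writes $f_m(r)=\nor{P_m\pi(r)e_k}^2$ (with $P_m$ the projection onto $\overline{\sspp}\{e_l:l\ge m\}$) and invokes strong continuity of $\pi$, whereas you use the Parseval identity $\sum_l|u^\pi_{k,l}(r)|^2=1$ to write $f_m=1-\sum_{l<m}|u^\pi_{k,l}|^2$ as one minus a finite sum of continuous matrix coefficients, which is if anything a little more elementary; both observations equally yield the monotone decrease to the continuous limit $0$, and then Dini finishes.
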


\begin{proof} 
It suffices to consider the case where $H_\pi$ is 
infinite dimensional. Fix $k\in\bb N$ and let
\[
f_m(r)=\sum\limits_{l=m}^\infty|u^\pi_{k,l}(r)|^2
=\sum\limits_{l=m}^\infty|(\pi(r)e_k,e_l)|^2 =\nor{P_m\pi(r)e_k}^2,
\]
where $P_m$ is the projection on the closed subspace generated by $\{e_l:l\ge m\}$.

Since the function $r\to  \pi(r)e_k:G\to H_\pi$ is continuous, so is the function
$r\to  P_m\pi(r)e_k:G\to H_\pi$; thus each $f_m$ is a continuous function.

Since $(P_m)_{m\in \bb{N}}$ decreases to $0$, the sequence $(f_m(r))_{m\in\bb{N}}$ decreases to $0$ 
for each $r\in G$. By Dini's Theorem, the convergence is uniform on compact subsets of $G$.   
\end{proof}

\begin{lemma}\label{neol}
Assume $h\in T(G)$.
Let $\pi\in\widehat G$ and $i,j\in\bb N_\pi$. 
For each compact set $L\subseteq G$,
and all $f,g\in L^2(G)$, we have that
$$h^\pi_{i,j} (\chi_L f\otimes \chi_L g) =
\sum_k (\check u^\pi_{i,k}\ot\mathbf{1})\tilde h^\pi_{k,j} (\chi_L f\otimes \chi_L g)$$
and
$$\tilde h^\pi_{i,j} (\chi_L f\otimes \chi_L g) =
\sum_k (u^\pi_{i,k}\ot\mathbf{1}) h^\pi_{k,j} (\chi_L f\otimes \chi_L g)$$
in the norm of $T(G)$.
\end{lemma}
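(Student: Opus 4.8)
The two identities are formally adjoint under interchanging the roles of $\pi(s)$ and $\pi(s)^* = \pi(s^{-1})$, so I would prove the first one carefully and indicate that the second follows by the same argument with $\check u$ replaced by $u$ (equivalently, by applying the first identity to $\tilde h$ in place of $h$, noting that $\widetilde{(\tilde h)}{}^\pi = h^\pi$ up to the obvious bookkeeping). The starting point is the pointwise relation between the coefficients. By definition $\tilde h^\pi(s,t) = \pi(s) h^\pi(s,t)$, so writing matrix entries in the orthonormal basis $\{e_k\}$ gives, marginally a.e.,
\[
\tilde h^\pi_{k,j}(s,t) = \sum_l (\pi(s)e_l, e_k)\, h^\pi_{l,j}(s,t)
= \sum_l u^\pi_{k,l}(s)\, h^\pi_{l,j}(s,t),
\]
and dually $h^\pi_{i,j}(s,t) = \sum_k (\pi(s)^* e_k, e_i)\, \tilde h^\pi_{k,j}(s,t) = \sum_k \overline{u^\pi_{i,k}(s)}\,\tilde h^\pi_{k,j}(s,t) = \sum_k \check u^\pi_{i,k}(s^{-1})\cdots$; more precisely $(\pi(s)^*e_k,e_i) = \overline{(\pi(s)e_i,e_k)} = \overline{u^\pi_{k,i}(s)}$, and since $\pi$ is unitary this equals $(\pi(s^{-1})e_k, e_i) = u^\pi_{i,k}(s^{-1}) = \check u^\pi_{i,k}(s)$. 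So the claimed formula is exactly the pointwise identity $h^\pi_{i,j}(s,t) = \sum_k \check u^\pi_{i,k}(s)\,\tilde h^\pi_{k,j}(s,t)$; the content of the lemma is that, after multiplying both sides by $\chi_L(s)\chi_L(t)$ (equivalently, after evaluating against $\chi_L f \otimes \chi_L g$), the series on the right converges in the norm of $T(G)$, and the sum is indeed the left-hand side, which a priori is only known to be a Schur multiplier restricted to $L\times L$ (Lemma \ref{hschur}).

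The plan for convergence in $\|\cdot\|_t$ is to estimate the tail. Fix $L$ compact and $f,g\in L^2(G)$. Each term $(\check u^\pi_{i,k}\otimes \mathbf 1)\tilde h^\pi_{k,j}(\chi_L f\otimes \chi_L g)$ lies in $T(G)$: by Lemma \ref{tild}, $\tilde h^\pi_{k,j}\in\frak S(G)$ with $\|\tilde h^\pi_{k,j}\|_{\frak S}\le \|h\|_t$, and $\chi_L f\otimes\chi_L g\in T(G)$, so $\tilde h^\pi_{k,j}(\chi_L f\otimes\chi_L g)\in T(G)$; then multiplication by the bounded function $\check u^\pi_{i,k}$ (bounded since $\pi$ is unitary, $|u^\pi_{i,k}|\le 1$) on the first variable keeps us in $T(G)$ because $\frak S(G)$ multiplies $T(G)$ and $\check u^\pi_{i,k}\otimes\mathbf 1\in\frak S(G)$ with $\|\check u^\pi_{i,k}\otimes\mathbf 1\|_{\frak S}\le 1$. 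For the tail, I would bound, using the module estimate and Cauchy--Schwarz in $k$,
\[
\Bigl\| \sum_{k\ge m} (\check u^\pi_{i,k}\otimes\mathbf 1)\,\tilde h^\pi_{k,j}(\chi_L f\otimes\chi_L g)\Bigr\|_t
\le \sum_{k\ge m}\|\check u^\pi_{i,k}\otimes\mathbf 1\|_{\frak S}\,\|\tilde h^\pi_{k,j}(\chi_L f\otimes\chi_L g)\|_t,
\]
and then realise $\tilde h^\pi_{k,j}(\chi_L f\otimes\chi_L g)$ as coming from an element of $T(G)$ whose relevant $T(G)$-norms, squared, are summable in $k$ by Lemma \ref{l_claim}. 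Concretely, writing $h=\sum_i \phi_i\otimes\psi_i$, from the elementary-tensor computation in the proof of Lemma \ref{tild} one has $\tilde h^\pi_{k,j} = N\check\Theta_{k,j}$ for a suitable $\Theta_{k,j}\in A(G)$ with $\|\Theta_{k,j}\|_{A(G)}\le \|u^\pi_{k,j}\cdot(\text{first factor})\|_2\cdot\|\text{second factor}\|_2$-type bounds whose squares sum (that is precisely the content of (\ref{cl}) in Lemma \ref{l_claim}); multiplying $N\check\Theta_{k,j}$ by $\chi_L f\otimes \chi_L g\in T(G)$ and using that $\|N(v)\,\xi\|_t\le \|v\|_{M^{\cb}A(G)}\|\xi\|_t\le\|v\|_{A(G)}\|\xi\|_t$ gives $\|\tilde h^\pi_{k,j}(\chi_L f\otimes\chi_L g)\|_t\le \|\Theta_{k,j}\|_{A(G)}\,\|\chi_L f\otimes\chi_L g\|_t$, and $\sum_k\|\Theta_{k,j}\|_{A(G)}^2\le\|h\|_t^2$. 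Thus the series is Cauchy in $\|\cdot\|_t$; combined with a further Cauchy--Schwarz over $k$ (splitting off $\sum_{k\ge m}|u^\pi_{i,k}(s)|^2$, which is where Lemma \ref{l_unicom} can be invoked on the compact support of the first factor to control things uniformly), one gets a genuine tail estimate $\to 0$ as $m\to\infty$.

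It remains to identify the $\|\cdot\|_t$-limit with $h^\pi_{i,j}(\chi_L f\otimes\chi_L g)$. For this I would first establish the pointwise identity $h^\pi_{i,j}(s,t)=\sum_k\check u^\pi_{i,k}(s)\tilde h^\pi_{k,j}(s,t)$ marginally a.e.\ --- either directly from $h^\pi(s,t)=\pi(s)^*\tilde h^\pi(s,t)$ and expansion in the basis, justified because for fixed $(s,t)$ outside a marginally null set the operator $h^\pi(s,t)$ is in $\cl B(H_\pi)$ and the partial sums $\sum_{k<m}(\cdot)e_k\otimes e_k\to I$ strongly --- or by testing against $f\otimes g$ and using the integral formulas (\ref{star}) together with Proposition \ref{rem2} to interchange sum and integral. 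Then, since the partial sums of the right-hand side converge to $h^\pi_{i,j}(\chi_L f\otimes\chi_L g)$ both marginally a.e.\ (after multiplication by $\chi_L\otimes\chi_L$, passing to a subsequence via \cite[Lemma 2.1]{st1}) and in $\|\cdot\|_t$, the two limits agree as elements of $T(G)$. The main obstacle is the tail estimate in $\|\cdot\|_t$: one must simultaneously exploit $\sum_k\|\tilde h^\pi_{k,j}\|_{\frak S}^2\le\|h\|_t^2$ (Lemma \ref{l_claim}) and the uniform decay $\sum_{k\ge m}|u^\pi_{i,k}|^2\to0$ on compacts (Lemma \ref{l_unicom}), packaging them through a Cauchy--Schwarz in the index $k$ against the compact support of $\chi_L f$, rather than estimating term by term; handling the $T(G)$-norm (a projective, not operator, norm) of the products $(\check u^\pi_{i,k}\otimes\mathbf 1)\tilde h^\pi_{k,j}(\chi_L f\otimes\chi_L g)$ correctly is the delicate point, and is exactly why the statement is restricted to arguments of the form $\chi_L f\otimes\chi_L g$.
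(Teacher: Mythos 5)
Your overall strategy (establish the pointwise identity, show the series converges in $\|\cdot\|_t$, then identify the two limits via \cite[Lemma~2.1]{st1}) is exactly the paper's, and you correctly list the three ingredients needed for the convergence step: Lemma~\ref{l_claim}, Lemma~\ref{l_unicom}, and a Cauchy--Schwarz in $k$. However, the convergence argument as written does not actually go through. Your displayed bound
\[
\Bigl\|\sum_{k\ge m}(\check u^\pi_{i,k}\otimes\mathbf 1)\tilde h^\pi_{k,j}(\chi_Lf\otimes\chi_Lg)\Bigr\|_t
\le\sum_{k\ge m}\|\check u^\pi_{i,k}\otimes\mathbf 1\|_{\frak S}\,\|\tilde h^\pi_{k,j}(\chi_Lf\otimes\chi_Lg)\|_t
\]
is a dead end: $\|\check u^\pi_{i,k}\otimes\mathbf 1\|_{\frak S}=\|u^\pi_{i,k}\|_\infty$ is of order $1$, so the right-hand side is an $\ell^1$ sum of quantities that (by Lemma~\ref{l_claim}) are only $\ell^2$, and it need not tend to zero. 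The conclusion ``thus the series is Cauchy in $\|\cdot\|_t$'' therefore does not follow, and the proposed ``further Cauchy--Schwarz over $k$'' cannot be applied after the triangle inequality has already destroyed the dependence on $k$ inside the $\frak S$-norm factor.

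The fix --- which you gesture at in your closing sentence but never carry out --- is to \emph{not} split off $\check u^\pi_{i,k}$ as a Schur multiplier. Instead re-parenthesize
\[
(\check u^\pi_{i,k}\otimes\mathbf 1)\,\tilde h^\pi_{k,j}(\chi_Lf\otimes\chi_Lg)
=\tilde h^\pi_{k,j}\bigl((\check u^\pi_{i,k}\chi_Lf)\otimes\chi_Lg\bigr),
\]
pair against a test contraction $T\in\cl B(L^2(G))$ to produce the scalar $\bigl(S_{\tilde h^\pi_{k,j}}(T)(\check u^\pi_{i,k}\chi_Lf),\chi_L\bar g\bigr)$, and only then apply Cauchy--Schwarz in $k$. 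This factors the tail as
$\bigl(\sum_{k=n}^m\|S_{\tilde h^\pi_{k,j}}(T)\|^2\bigr)^{1/2}\bigl(\sum_{k=n}^m\|\check u^\pi_{i,k}\chi_Lf\|_2^2\bigr)^{1/2}\|\chi_L\bar g\|_2$, where the first factor is bounded uniformly in $n,m$ by $\|h\|_t$ (Lemma~\ref{l_claim}) and the second tends to $0$ because, since $\check u^\pi_{i,k}\chi_Lf$ is supported in the compact $L$, $\sum_{k\ge n}\|\check u^\pi_{i,k}\chi_Lf\|_2^2\le\|f\|_2^2\sup_{r\in L^{-1}}\sum_{k\ge n}|u^\pi_{i,k}(r)|^2\to0$ by Lemma~\ref{l_unicom}. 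The detour via the functions $\Theta_{k,j}\in A(G)$ in your sketch is unnecessary once this is done, and in fact would face the same obstacle (square-summability versus summability) if used on its own.
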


\begin{proof}
We prove the first formula; the second follows similarly.
We show first that the series
\begin{equation}\label{eq_hco}
\sum_k (\check u^\pi_{i,k}\ot\mathbf{1})\tilde h^\pi_{k,j} (\chi_L f\otimes \chi_L g)
\end{equation}
converges in the norm of $T(G)$.
Fix $\epsilon > 0$ and let $T\in \cl B(L^2(G))$ be a contraction.
For all $n < m$, 
\begin{align*}
\sum_{k=n}^m\nor{(\check u^\pi_{i,k}\chi_Lf)}^2
= &
\sum_{k=n}^m\int_L |\check u^\pi_{i,k}(r)|^2|f(r)|^2dr
= \int_L
\sum_{k=n}^m |\check u^\pi_{i,k}(r)|^2|f(r)|^2dr \\ \le &
\nor{f}_2^2 \sup_{r\in L\an}\sum_{k=n}^m |u^\pi_{i,k}(r)|^2 .
\end{align*}
Using Lemma \ref{l_unicom},  we can choose $n < m$ so that
\begin{equation}\label{eqnew}
\sum_{k=n}^m\nor{(\check u^\pi_{i,k}\chi_Lf)}^2 < \epsilon^2 \|h\|_t^{-2}
\|\chi_L \overline{g}\|^{-2}.
\end{equation}
By the Cauchy-Schwarz inequality,
\begin{align*}
& \left|\left\langle T, \sum_{k=n}^m (\check u^\pi_{i,k}\ot\mathbf{1})\tilde h^\pi_{k,j} (\chi_L f\otimes \chi_L g)\right\rangle\right|^2 \\
& \leq
\left(\sum_{k=n}^m \left|\sca{S_{\tilde h^\pi_{k,j}}(T)(\check u^\pi_{i,k} \chi_L f),\chi_L \overline{g}}\right|\right)^2 
\leq
\left(\sum_{k=n}^m \left\| S_{\tilde h^\pi_{k,j}}(T)(\check u^\pi_{i,k} \chi_L f)\right\|
\left\|\chi_L \overline{g}\right\|\right)^2\\
&\le 
\sum_{k=n}^m\nor{S_{\tilde h^\pi_{k,j}}(T)}^2
\sum_{k=n}^m\nor{(\check u^\pi_{i,k} \chi_L f)}^2 \left\|\chi_L \overline{g}\right\|^2  < \epsilon ,
\end{align*}
where for the last inequality we have used (\ref{eqnew}) and Lemma \ref{l_claim}.
It follows that the series (\ref{eq_hco}) converges in the norm of $T(G)$;
let $\Lambda$ be its sum.
By \cite[Lemma 2.1]{st1}, there exists a sequence of partial sums of
(\ref{eq_hco}) that converges marginally almost everywhere to $\Lambda$.

On the other hand, for every $s,t\in G$, we have
\begin{align*}
h^\pi_{i,j}(s,t) &= (h^\pi(s,t)e_j,e_i) = (\tilde h^\pi(s,t)e_j,\pi(s)e_i) \\
&
=\sum_k(\tilde h^\pi(s,t)e_j,e_k)(e_k,\pi(s)e_i)= \sum_k u^\pi_{i,k}(s\an)\tilde h^\pi_{k,j}(s,t) \, ,
\end{align*}
and hence the series (\ref{eq_hco}) converges pointwise to
the function $h^\pi_{i,j} (\chi_L f\otimes \chi_L g)$. It follows that
$\Lambda = h^\pi_{i,j} (\chi_L f\otimes \chi_L g)$ and the proof is complete.
\end{proof}

\begin{lemma}\label{tgsg}
Let $\cl S\subseteq T(G)$ be a norm closed $L^\infty (G)$-bimodule.
Each $h\in\cl S$ is the norm limit of a sequence
$(h_n)$ with $h_n=h\chi_{K_n\times L_n}\in\cl S\cap\frak S(G)$
where $K_n$ and $L_n$ are compact sets.  
\end{lemma}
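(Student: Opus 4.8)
The plan is to take $h_n=h\chi_{K_n\times L_n}$ for compact sets $K_n,L_n$ chosen to depend on $h$, and then to verify the three properties required of $h_n$: membership in $\cl S$, membership in $\frak S(G)$, and $\|\cdot\|_t$-convergence to $h$. Fix a representation $h=\sum_{i=1}^{\infty}f_i\ot g_i$ with $\sum_i\|f_i\|_2^2<\infty$ and $\sum_i\|g_i\|_2^2<\infty$, and set $F=\sum_i|f_i|^2$ and $H=\sum_i|g_i|^2$; these functions lie in $L^1(G)$ and hence are finite almost everywhere. One cannot use arbitrary compact sets here: an elementary tensor $f\ot g$ with $f\notin L^{\infty}(G)$ belongs to $T(G)$ (say, when $G$ is compact) but is not a Schur multiplier, and the same is true of every truncation $(f\chi_K)\ot(g\chi_L)$ of it. So the $K_n,L_n$ must be tailored to the chosen representation of $h$.

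First I would construct increasing sequences $(K_n)$ and $(L_n)$ of compact subsets of $G$ with $F\le n$ on $K_n$, $H\le n$ on $L_n$, and $G\setminus\bigcup_nK_n$ and $G\setminus\bigcup_nL_n$ null. This uses only that $G$ is $\sigma$-compact (being second countable and locally compact) and that Haar measure is inner regular: for each $j$ exhaust the Borel set $\{s:F(s)\le j\}$, which has $\sigma$-finite Haar measure, from inside by an increasing sequence $(K_{j,n})_n$ of compact sets with $\bigcup_nK_{j,n}$ conull in $\{s:F(s)\le j\}$, and put $K_n=\bigcup_{j\le n}K_{j,n}$. Then each $K_n$ is compact, the sequence is increasing, $F\le n$ on $K_n$ (as $K_n\subseteq\{s:F(s)\le n\}$), and $\bigcup_nK_n$ is conull because it contains $\bigcup_nK_{j,n}$ for every $j$, hence contains $\{s:F(s)<\infty\}$ up to a null set. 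Construct $(L_n)$ from $H$ in exactly the same way. I expect this construction to be the only real point of the proof.

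With the sets in hand, write $h_n=(\chi_{K_n}\ot\chi_{L_n})h=\sum_i(\chi_{K_n}f_i)\ot(\chi_{L_n}g_i)$. Since $\cl S$ is an $L^{\infty}(G)$-bimodule, $h_n\in\cl S$. Also $h_n\in\frak S(G)$: each $\chi_{K_n}f_i$ lies in $L^{\infty}(G)$ because $|\chi_{K_n}f_i|^2\le\chi_{K_n}F\le n$, and $\esssup_s\sum_i|\chi_{K_n}(s)f_i(s)|^2=\esssup_s\chi_{K_n}(s)F(s)\le n$, and likewise $\esssup_t\sum_i|\chi_{L_n}(t)g_i(t)|^2\le n$; so by the characterisation of Schur multipliers recalled in Section~\ref{s_prel}, the function $h_n=\sum_i(\chi_{K_n}f_i)\ot(\chi_{L_n}g_i)$ belongs to $\frak S(G)$. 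Hence $h_n\in\cl S\cap\frak S(G)$.

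Finally, for the norm convergence, decompose
\[
f_i\ot g_i-(\chi_{K_n}f_i)\ot(\chi_{L_n}g_i)=\big((1-\chi_{K_n})f_i\big)\ot g_i+(\chi_{K_n}f_i)\ot\big((1-\chi_{L_n})g_i\big),
\]
sum over $i$, and apply $\|\sum_i p_i\ot q_i\|_t\le\big(\sum_i\|p_i\|_2^2\big)^{1/2}\big(\sum_i\|q_i\|_2^2\big)^{1/2}$ to obtain
\[
\|h-h_n\|_t\le\Big(\int_{G\setminus K_n}F\Big)^{1/2}\Big(\sum_i\|g_i\|_2^2\Big)^{1/2}+\Big(\sum_i\|f_i\|_2^2\Big)^{1/2}\Big(\int_{G\setminus L_n}H\Big)^{1/2}.
\]
Since $K_n$ increases to a conull set, $\chi_{G\setminus K_n}\to0$ almost everywhere, so $\int_{G\setminus K_n}F\to0$ by dominated convergence (as $F\in L^1(G)$); likewise $\int_{G\setminus L_n}H\to0$. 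Hence $\|h-h_n\|_t\to0$, and the proof is complete.
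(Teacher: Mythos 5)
Your proof is correct and follows essentially the same approach as the paper's: truncate on compact sets where $\sum_i|f_i|^2$ and $\sum_i|g_i|^2$ are bounded, use the bimodule property for membership in $\cl S$, invoke the Schur multiplier characterisation for membership in $\frak S(G)$, and estimate $\|h-h_n\|_t$ directly. The only (cosmetic) difference is in the construction of the compact sets --- the paper chooses $K_n\subseteq A_n:=\{F\le n\}$ with $m(A_n\setminus K_n)<2^{-n}$, whereas you build an increasing exhaustion --- and you spell out the convergence estimate that the paper dismisses as ``straightforward.''
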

\begin{proof}
Let $ h=\sum_{i=1}^{\infty} f_i\ot g_i$,
where  $\sum_{i=1}^{\infty} \nor{f_i}_2^2$ and $\sum_{i=1}^{\infty} \nor{g_i}_2^2$
are finite.
Given $n\in \bb N$, let
\begin{align*}
A_n =\left\{s\in G: \sum_{i=1}^{\infty} |f_i(s)|^2\le n\right\}, \qquad
B_n =\left\{t\in G: \sum_{i=1}^{\infty} |g_i(t)|^2\le n\right\}
\end{align*}
and choose compact sets $K_n\subseteq A_n$ and   $L_n\subseteq B_n$
such that $m(A_n\setminus K_n)<2^{-n}$ and  $m(B_n\setminus L_n)<2^{-n}$.
Setting $h_n=h\chi_{K_n\times L_n}$, we see that
\\
(a) $h_n \in\cl S$ because  $\cl S$ is an $L^{\infty}(G)$-bimodule, and
\\
(b) $h_n\in\frak S(G)$ because
$h_n(s,t) =  \sum_{i=1}^{\infty} (\chi_{K_n}f_i)(s)(\chi_{L_n}g_i)(t)$ ($s,t\in G$), where
$\sum_{i=1}^{\infty}| (\chi_{K_n}f_i)(s)|^2\le n$ and
$\sum_{i=1}^{\infty}|(\chi_{L_n}g_i)(t)|^2\le n$  a.e..

It is straightforward to see that \;$\nor{h-h_n}_t\to 0$.
\end{proof}

\begin{lemma}\label{48}
Let $h\in T(G)$ be supported in a compact set $K\times K$.
Then $h$ belongs to the $T(G)$-closed
linear span of
\[
\{\chi_{K\times K}h^\pi_{i,j}:  \pi\in\widehat G, i,j\in\bb N_\pi\}. 
\]
\end{lemma}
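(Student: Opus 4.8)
The plan is to write each finite linear combination $\sum c_{\pi,i,j}\chi_{K\times K}h^\pi_{i,j}$ as a $T(G)$-valued integral of the diagonal translates $h_r$, $h_r(s,t)=h(sr,tr)$, against the matching combination $\sum c_{\pi,i,j}u^\pi_{i,j}$ of matrix coefficients, and then to choose these combinations so that, after pairing with any operator in $\cl B(L^2(G))$, they act like an approximate identity at the unit.

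First a reduction: since $h$ vanishes off $K\times K$ we may write $h=\sum_i f_i\otimes g_i$ with $f_i=\chi_K f_i$ and $g_i=\chi_K g_i$ (replace $h$ by $(\chi_K\otimes\chi_K)h$, which leaves it unchanged and admits such a representation). Next I record two elementary facts. (a) The map $r\mapsto h_r$ is continuous from $G$ into $T(G)$: on an elementary tensor $f\otimes g$ one has $h_r=f_r\otimes g_r$ with $f_r(s)=f(sr)$, so continuity follows from strong continuity of right translation on $L^2(G)$ together with the bound $\|h_r\|_t\le\Delta(r)^{-1}\|h\|_t$ of Lemma \ref{l_tr}(ii); the general case follows by dominated convergence applied to the defining series. (b) If $r\notin K^{-1}K$ then $\chi_{K\times K}h_r=0$, since $\chi_{K\times K}(s,t)h(sr,tr)\ne 0$ forces $s,t\in K\cap Kr^{-1}$, which is empty in that case. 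Hence, for any bounded continuous $\psi:G\to\bb C$, the integrand $r\mapsto\chi_{K\times K}h_r\,\psi(r)$ is continuous, supported in the compact set $K^{-1}K$, and dominated there by $\Delta(r)^{-1}\|h\|_t\|\psi\|_\infty$; so the Bochner integral $I_\psi:=\int_{K^{-1}K}\chi_{K\times K}h_r\,\psi(r)\,dr$ exists in $T(G)$. Pairing $I_\psi$ with a rank-one operator and applying Fubini, one identifies $I_\psi$, as a function, with $(s,t)\mapsto\chi_{K\times K}(s,t)\int_G h(sr,tr)\psi(r)\,dr$; in particular, when $\psi=\sum c_{\pi,i,j}u^\pi_{i,j}$ is a finite combination, $I_\psi=\sum c_{\pi,i,j}\chi_{K\times K}h^\pi_{i,j}$ in $T(G)$, each summand being an element of $T(G)$ by Lemma \ref{hschur} and Remark \ref{remK}.

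It thus suffices to produce finite combinations $\psi_n$ of the $u^\pi_{i,j}$ with $I_{\psi_n}\to h$ in $T(G)$, and since the closed linear span in the statement is norm closed, hence weakly closed, it is enough to get this convergence weakly. For $T\in\cl B(L^2(G))=T(G)^*$, set $\phi_T(r)=\langle T,\chi_{K\times K}h_r\rangle_t$; by (a) and (b) this is continuous and supported in $K^{-1}K$, so $\phi_T\in C_c(G)$, and $\langle T,I_{\psi_n}\rangle_t=\int_G\phi_T(r)\psi_n(r)\,dr$, while $\langle T,h\rangle_t=\phi_T(e)$ because $h=\chi_{K\times K}h_e$. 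So the proof reduces to finding finite combinations $\psi_n$ of the $u^\pi_{i,j}$, $\pi\in\widehat G$, with $\int_G\phi(r)\psi_n(r)\,dr\to\phi(e)$ for every $\phi\in C_c(G)$. This in turn follows from the classical fact that finite linear combinations of matrix coefficients of irreducible unitary representations of $G$ are dense, uniformly on compact sets, in $C(G)$: fix a standard approximate identity $(v_n)\subseteq C_c(G)$ at $e$ (so $\int\phi v_n\to\phi(e)$ for continuous $\phi$) and choose $\psi_n$, a finite combination of the $u^\pi_{i,j}$, with $\sup_{K^{-1}K}|\psi_n-v_n|<1/n$; then for $\phi\in C_c(G)$ supported in $K^{-1}K$ we get $|\int\phi\psi_n-\phi(e)|\le|\int\phi v_n-\phi(e)|+\|\phi\|_\infty\,m(K^{-1}K)/n\to 0$.

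The only non-formal ingredient, and the main obstacle, is precisely this uniform-on-compacts density of $\sspp\{u^\pi_{i,j}:\pi\in\widehat G\}$ in $C(G)$. I would either cite it from \cite{spronk_tur,lt}, whose methods we follow, or argue: by polarization each element of $A(G)$ is a combination of continuous positive-definite functions; each normalized continuous positive-definite function is a state of $C^*(G)$, hence by Krein--Milman a weak* limit of convex combinations of pure states, i.e.\ of normalized irreducible positive-definite functions, the convergence being uniform on compact sets by Raikov's continuity theorem; a single matrix coefficient $(\pi(\cdot)\zeta,\zeta)$ is a uniform limit of its finite truncations, which are finite combinations of the $u^\pi_{i,j}$; and $A(G)$ is uniformly dense in $C_0(G)$ by Stone--Weierstrass. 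With this in hand the rest is routine.
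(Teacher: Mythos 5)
Your argument is correct and rests on the same computational core as the paper's: both realise $\chi_{K\times K}h^\pi_{i,j}$ as a compactly supported $T(G)$-valued integral of the diagonal translates $h_r$ against $u^\pi_{i,j}$ (the paper via the $\star$-action of $L^1(G)$ on $T(G)$, you via a Bochner integral — the same thing), both invoke the Gelfand--Raikov/Peter--Weyl density of matrix coefficients uniformly on compacts, and both finish with an approximate identity concentrating at $e$. The only difference is organisational: you build a weakly convergent sequence of finite combinations, whereas the paper applies Hahn--Banach and shows that any operator annihilating every $\chi_{K\times K}h^\pi_{i,j}$ also annihilates $h$ — two sides of the same duality.
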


\begin{proof}
Suppose $T\in B(L^2(G))$ satisfies
\[\du{T}{\chi_{K\times K}h^\pi_{i,j}}_t=0\quad\text{for all }\;  \pi\in\widehat G, \, i,j\in\bb N_\pi.
\]
We will show that $\du{T}{h}_t=0$.

Recall that
$h^\pi_{i,j}(s,t)= \int_Gh(sr,tr)u^\pi_{i,j}(r)dr$.
We may write $\chi_{K\times K}h^\pi_{i,j}$ in the form
\[\chi_{K\times K}h^\pi_{i,j}
=\chi_{K\times K}(\chi_{K\an K}u^\pi_{i,j}\Delta\an)\star h
\]
where $\star$ is the action of $L^1(G)$ on $T(G)$ given by
\[(g\star h)(s,t):=\int_G h(sr,tr)g(r)\Delta(r)dr \]
which satisfies $\nor{g\star h}_{T(G)}\le \nor{g}_{L^1(G)}\nor{h}_{T(G)}$.
Thus the hypothesis gives
$$\du{S_{\chi_{K\times K}}(T)}{(\chi_{K\an K}u^\pi_{i,j}\Delta\an)\star h}_t
= \du{T}{\chi_{K\times K}(\chi_{K\an K}u^\pi_{i,j}\Delta\an)\star h}_t = 0.$$
Now let $f$ be  a continuous function supported in the
compact set  $K\an K$. Then $f\Delta$ is continuous and vanishes outside
$K\an K$; hence it is the limit, uniformly in $K\an K$, of a
sequence $(g_n)$ of linear combinations of coefficients  $u^\pi_{i,j}$
of irreducible representations $\pi$ of $G$ (see
\cite[Theorem 3.27, 3.31 and Proposition 3.33]{folland} or
\cite[13.6.5 and 13.6.4]{dixmier}).
Hence $g_n\Delta\an\to f$ uniformly in  $K\an K$ (observe that $\Delta\an$
is  continuous, hence bounded, on compact sets).
Each $g_n$ is a linear combination of coefficients $u^\pi_{i,j}$ and therefore
$\du{S_{\chi_{K\times K}}(T)}{(\chi_{K\an K}g_n\Delta\an)\star h}_t=0$ for each $n$.
Since  $f=f\chi_{K\an K}$, it follows that
$\du{S_{\chi_{K\times K}}(T)}{f\star h}_t=0.$
Now let $\{f_\alpha\}$ be an approximate identity for $L^1(G)$
consisting of non-negative continuous functions with $\nor{f_\alpha}_1=1$,
all supported in $K\an K$.
Then a standard argument shows that $\nor{f_\alpha\star h-h}_t\to 0$ and we obtain
\[
\du{T}{h}_t=\du{T}{\chi_{K\times K}h}_t
=\du{S_{\chi_{K\times K}}(T)}{h}_t
=\lim_\alpha\du{S_{\chi_{K\times K}}(T)}{f_\alpha\star h}_t
=0
\]
which proves the lemma.
\end{proof}

\bigskip

We now proceed to the proof of Theorem \ref{th_satlcg}.
We first show that
\begin{equation}\label{eq_ine}
\Sat(J) \subseteq (\Bim (J^{\perp}))_{\perp}.
\end{equation}
Let $u\in J$, $h\in T(G)$, $w\in \frak{S}(G)$ and
$T\in J^{\perp}\subseteq \vn(G)$.
Then, if $\tau\in A(G)^*$ satisfies
$P^*(\tau) = T$, using (\ref{pn}), we have
\begin{eqnarray*}
\langle  S_w(T),N(u)h\rangle_t & = & \langle  T,  N(u)wh\rangle_t =
\langle  P^*(\tau), N(u)wh\rangle_t\\
& = & \langle  \tau,  P(N(u)wh)\rangle_a = \langle \tau, uP(wh)\rangle_a = 0
\end{eqnarray*}
since $u\in J$ and $P(wh) \in A(G)$, hence $uP(wh)\in J$. 
Thus, $S_w(T)$ annihilates $\Sat(J)$ by Proposition \ref{newsat}.
Since $\{S_w(T): T\in J^\perp, w\in\frak{S}(G)\}$ 
generates $\Bim(J^\perp)$, (\ref{eq_ine}) is established.

\medskip

For the reverse inclusion, suppose that $h\in (\Bim (J^{\perp}))_{\perp}$.
By Lemma \ref{tgsg}, we may
assume that there exists a compact set $K\subseteq G$
such that $\supp h\subseteq K\times K$ and $h \in \frak{S}(G)\cap (\Bim (J^{\perp}))_{\perp}$.

The steps of the argument are the following:

\medskip
\noindent\textbf{Step 1. } {\em  If $T \in J^\perp$ then
$S_{h_r}(T) = 0$ for every $r\in G$.}

\smallskip

\noindent {\em Proof. } A direct verification using relation (\ref{eq_p}) shows that
if $r\in G$ then $\Delta(r)^{-1} P(h) = P(h_r)$.
By Lemma \ref{l_tr} (i), $h_r\in \frak{S}(G)$.
It suffices to prove that $\sca{S_{h_r}(T)\xi,\eta}=0$ whenever
$\xi$ and $\eta$ are in $L^\infty(G)\cap L^2(G)$.
In this case,
$w:=\xi\ot\bar\eta$ is in $T(G)\cap\frak S(G)$ and,
if $\tau\in A(G)^*$ is such that $T_\tau=T$, then
\begin{align*}
\sca{S_{h_r}(T)\xi,\eta} & =
\langle S_{h_r}(T), w\rangle_t = \langle T, h_r w\rangle_t =
\langle \tau, P(h_r w)\rangle_a \\ & =
\Delta(r)^{-1} \langle \tau, P(h w_{r^{-1}})\rangle_a
= \Delta(r)^{-1} \langle T, h w_{r^{-1}}\rangle_t \\
& = \Delta(r)^{-1} \langle S_{w_{r^{-1}}}(T), h\rangle_t = 0,
\end{align*}
since $h$ annihilates $\Bim(J^{\perp})$. Hence, $S_{h_r}(T) = 0$ for
every $r\in G$.

\medskip

\noindent{\bf Step 2. } {\em If $T \in J^\perp$ then
$S_{\chi_{L\times L}h^{\pi}_{i,j}}(T) = 0$
for all $\pi\in \widehat G$, all $i,j$,
and all compact sets $L\subseteq G$.}

\smallskip

\noindent This follows from Step 1 and Corollary \ref{l_peq}.

\medskip

\noindent{\bf Step 3. }  {\em If $T \in J^\perp$ then
$S_{\tilde h^{\pi}_{i,j}}(T) = 0$.}

\smallskip

\noindent {\em Proof. } Step 2 and  Lemma \ref{neol} imply that,
for every $f,g\in L^2(G)$ and every compact set $L\subseteq G$, we have
\[
(S_{\tilde{h}^{\pi}_{i,j}}(T)\chi_Lf,\chi_Lg) = \sum_{k=1}^{\infty}
(S_{\chi_{L\times L}h^{\pi}_{k,j}}(T)u^\pi_{i,k}\chi_Lf,\chi_Lg) =0.
\]
This implies that $S_{\tilde{h}^{\pi}_{i,j}}(T) = 0$.

\medskip

\noindent{\bf Step 4. } {\em If $\pi\in \widehat G$ and $L$ is a compact subset
of  $G$, then 
$\tilde{h}^{\pi}_{i,j}\chi_{L \times L}\in \Sat(J)$ for all $i,j$.}

\smallskip

\noindent {\em Proof. } Since 
$\tilde{h}^{\pi}_{i,j}\in \frak{S}(G)$ (Lemma \ref{tild}) and
$\tilde{h}^{\pi}_{i,j}(sr,tr) = \tilde{h}^{\pi}_{i,j}(s,t)$, Theorem \ref{spro}
implies that $\tilde{h}^{\pi}_{i,j} =N(v)$,
for some $v\in M^{\cb}A(G)$. We claim that  $vA(G) \subseteq J$.
Indeed, for every $\tau\in J^{\perp}$ and every $w\in T(G)$, we have, by Step 3,
\begin{align*} 
0 &= \langle S_{\tilde{h}^{\pi}_{i,j}}(T_{\tau}), w\rangle_t
= \langle S_{N(v)}(T_\tau), w\rangle_t
= \langle T_\tau, N(v)w\rangle_t \\
&= \langle \tau, P(N(v)w)\rangle_a = \langle \tau, vP(w)\rangle_a  
\end{align*}    
where we have used relation (\ref{pn}). 
Note that $vP(w)\in A(G)$ since $P(w)\in A(G)$ and $v\in M^{\cb}A(G)$.
This equality shows, by the Hahn-Banach theorem, that $vP(w)\in J$.
Thus, since $P$ is surjective, $vA(G) \subseteq J$.
We may choose $w\in T(G)$ such that
$u:=P(w)$ satisfies $u|_{LL\an}=1$. Then
$N(u)\chi_{L\times L}=\chi_{L\times L}$ and so
\[
\tilde{h}^{\pi}_{i,j} \chi_{L\times L}=
N(v)\chi_{L\times L}=N(v)N(u)\chi_{L\times L}=N(vu)\chi_{L\times L}.
\]
But $vu=vP(w)\in J$. Thus,
$\tilde{h}^{\pi}_{i,j}\chi_{L\times L}\in \Sat(J)$.

\medskip

\noindent{\bf Step 5. }
{\em If $\pi\in \widehat G$ and $L$ is a compact subset of  $G$,
then $h^{\pi}_{i,j}\chi_{L \times L}\in \Sat(J)$ for all $i,j$.}

\smallskip

\noindent {\em Proof. } This is a direct consequence of Lemma \ref{neol}.

\medskip

\noindent{\bf Step 6. } $h\in\Sat(J)$.

\smallskip

\noindent {\em Proof. }
By Lemma \ref{48}, $h$ is in the $T(G)$-norm
closed linear span of elements of the form
$h^{\pi}_{i,j}\chi_{L\times L}$, so
$h\in\Sat(J)$.

\medskip

The proof of Theorem \ref{th_satlcg} is complete.

\section{Jointly invariant subspaces}\label{s_invsub}

In this section, we characterise the common weak* closed invariant
subspaces of Schur multipliers and a class of completely bounded maps
arising from a canonical representation of the measure algebra of $G$. 

Let $\rho : G\rightarrow \cl B(L^2(G))$, $r\rightarrow \rho_r$, be the
right regular representation of $G$ on $L^2(G)$, that is, the representation
given by $(\rho_r f)(s) = \Delta(r)^{1/2} f(sr)$, $s,r\in G$, $f\in L^2(G)$.

Let $M(G)$ be the Banach algebra of all bounded complex Borel measures on $G$.
Following \cite{nrs} (see also \cite{smith_spronk}), we define a representation
$\Gamma$ of $M(G)$ on $\cl B(L^2(G))$ by letting
$$\Gamma(\mu) (T) = \int_{G} \rho_rT\rho_r^*d\mu(r), \ \ \ T\in \cl B(L^2(G)),$$
the integral being understood in the weak sense
(that is, for every $h\in T(G)$ and every $T\in \cl B(L^2(G))$
the formula
$\langle\Gamma(\mu) (T),h\rangle_t
= \int_{G} \langle\rho_rT\rho_r^*,h\rangle_t d\mu(r)$ holds).
This representation was studied by E. St\o{}rmer \cite{stor}, F. Ghahramani \cite{gh},
M. Neufang \cite{n} and M. Neufang, Zh.-J. Ruan and N. Spronk  \cite{nrs},
among others.

Denote, as is customary, by ${\rm Ad}\rho_r$ the map on $\cl B(L^2(G))$ given by
${\rm Ad}\rho_r(T)$ $= \rho_r T \rho_r^*$, $T\in \cl B(L^2(G))$; since 
${\rm Ad}\rho_r$ is a (bounded) weak* continuous map, it has a (bounded) predual
$\theta_r : T(G)\to T(G)$.

\begin{lemma}\label{l_pred}
Let $r\in G$. Then $\theta_r(h) = \Delta(r^{-1})h_{r^{-1}}$, for every $h\in T(G)$.
\end{lemma}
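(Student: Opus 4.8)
\textbf{Proof plan for Lemma \ref{l_pred}.}
The plan is to compute the predual $\theta_r$ of ${\rm Ad}\rho_r$ directly from the pairing $\du{\cdot}{\cdot}_t$ between $\cl B(L^2(G))$ and $T(G)$, using that it suffices to check the identity on elementary tensors $h = f\ot g$ with $f,g\in L^2(G)$, since these span a dense subspace of $T(G)$ and both $\theta_r$ and $h\mapsto \Delta(r^{-1})h_{r^{-1}}$ are bounded linear maps on $T(G)$ (the boundedness of $h\mapsto h_{r^{-1}}$ comes from Lemma \ref{l_tr}(ii)). First I would recall that, by definition of the predual, $\theta_r$ is characterized by $\du{T}{\theta_r(h)}_t = \du{{\rm Ad}\rho_r(T)}{h}_t = \du{\rho_r T\rho_r^*}{h}_t$ for all $T\in\cl B(L^2(G))$, $h\in T(G)$.

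For $h = f\ot g$, unwinding the pairing gives $\du{\rho_r T\rho_r^*}{f\ot g}_t = (\rho_r T\rho_r^* f,\bar g) = (T\rho_r^* f,\rho_r^*\bar g)$, since $\rho_r$ is unitary so $\rho_r^* = \rho_{r^{-1}}$ up to the adjoint. Thus the task reduces to identifying $\rho_r^* f$ and $\rho_r^*\bar g$ as $L^2$-functions. From $(\rho_r f)(s) = \Delta(r)^{1/2} f(sr)$ one reads off that $\rho_r$ is unitary and $\rho_r^{-1} = \rho_{r^{-1}}$, so $(\rho_r^* f)(s) = (\rho_{r^{-1}} f)(s) = \Delta(r^{-1})^{1/2} f(sr^{-1}) = \Delta(r)^{-1/2} f_{r^{-1}}^{(1)}(s)$ in the notation of translating the argument. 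Therefore $(T\rho_r^* f,\rho_r^*\bar g) = \du{T}{(\rho_r^* f)\ot(\overline{\rho_r^*\bar g})}_t$, and the two $\Delta(r)^{-1/2}$ factors combine to $\Delta(r)^{-1} = \Delta(r^{-1})$. It remains to check that the resulting elementary tensor $(\rho_r^* f)\ot(\overline{\rho_r^*\bar g})$, viewed as a function on $G\times G$, equals $\Delta(r^{-1})$ times the function $(s,t)\mapsto (f\ot g)(sr^{-1},tr^{-1}) = f(sr^{-1})g(tr^{-1})$, i.e. $\Delta(r^{-1})(f\ot g)_{r^{-1}}$ in the notation $h_{r^{-1}}(s,t) = h(sr^{-1},tr^{-1})$ of Lemma \ref{l_tr}; this is a direct substitution, being careful that $\overline{(\rho_r^*\bar g)}(t) = \Delta(r)^{-1/2} g(tr^{-1})$.

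Putting these together yields $\du{T}{\theta_r(f\ot g)}_t = \du{T}{\Delta(r^{-1})(f\ot g)_{r^{-1}}}_t$ for all $T$, hence $\theta_r(f\ot g) = \Delta(r^{-1})(f\ot g)_{r^{-1}}$; by density and boundedness this extends to all $h\in T(G)$. The only genuinely delicate point is bookkeeping the modular function: one must keep track of the two $\Delta(r)^{1/2}$-type factors coming from the two legs of the tensor and confirm they produce exactly $\Delta(r^{-1})$ (and not, say, $\Delta(r^{-1})^{1/2}$ or $\Delta(r)$), and one must match the translation direction ($r^{-1}$ rather than $r$) against the convention $h_{r}(s,t)=h(sr,tr)$ fixed earlier. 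I expect no real obstacle beyond this routine care; the statement follows once the substitution is performed cleanly on elementary tensors.
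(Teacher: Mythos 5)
Your proof is correct and follows essentially the same route as the paper's: reduce to elementary tensors by linearity and boundedness (via Lemma \ref{l_tr}(ii)), unwind the trace pairing using $\rho_r^* = \rho_{r^{-1}}$, and track the two $\Delta(r)^{-1/2}$ factors from each leg of the tensor to produce $\Delta(r^{-1})$. The only difference is cosmetic: the paper writes the elementary tensor as $f\ot\bar g$ so the pairing reads $(Tf,g)$, which slightly streamlines the conjugation bookkeeping, but the substance is identical.
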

\begin{proof}
By linearity and continuity
(see Lemma \ref{l_tr} (ii)),
it suffices to check the formula when $h$ is an elementary tensor,
say $h = f\otimes \bar g$ for some $f,g\in L^2(G)$.
For $T\in \cl B(L^2(G))$, we have

\begin{align*}
\du{T}{\theta_r(h)}_t & =
\langle {\rm Ad }\rho_r(T),h\rangle_t
= \langle \rho_rT\rho_{r\an},f\otimes \bar g\rangle_t
= (T(\rho_{r\an} f),\rho_{r\an} g) \\
&= \langle T,(\rho_{r\an} f)\otimes \overline{\rho_{r\an} g}\rangle_t.
\end{align*}
However, if $s,t\in G$ then
\begin{align*}
(\rho_{r\an} f\otimes \overline{\rho_{r\an}g})(s,t)
& =  (\rho_{r\an} f)(s) (\overline{\rho_{r\an} g})(t) =
\Delta(r\an)f(sr\an)\bar g(tr\an)\\
& =  \Delta(r\an)h_{r\an}(s,t).
\end{align*}
The proof is complete.
\end{proof}

\begin{lemma}\label{l_exid}
Let $V\subseteq T(G)$ be a norm closed $L^{\infty}(G)$-bimodule such that
$\theta_r(V)\subseteq V$ for each $r\in G$. Then there exists a closed ideal 
$J\subseteq A(G)$ such that $V = \Sat(J)$.
\end{lemma}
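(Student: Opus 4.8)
The plan is to extract the ideal $J$ directly from $V$ by projecting $V$ onto $\vn(G)$ via the predual map $P$. Concretely, I would set
\[
J = \overline{\{P(h) : h \in V\}}^{\|\cdot\|_{A(G)}} \subseteq A(G),
\]
and first check that $J$ is a closed ideal. Since $P(wh) = ?$ is not directly available, I would instead use relation \eqref{pn}: for $u \in A(G)$ and $h \in V$ we have $P(N(u)h) = uP(h)$, and $N(u)h \in V$ because $V$ is an $L^\infty(G)$-bimodule invariant under Schur multipliers (recall $N(u) \in \frak{S}(G)$, and by Lemma \ref{tgsg} and the bimodule property $V$ is $\frak{S}(G)$-invariant: any $w \in \frak{S}(G)$ applied to $h \in V$ stays in $V$ since $S_w$ restricted to $T(G)$ is a limit of bimodule actions). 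Hence $uP(h) \in \{P(h') : h' \in V\}$, so $A(G)\cdot\{P(h):h\in V\} \subseteq \{P(h'):h'\in V\}$, and taking closures shows $J$ is a closed ideal of $A(G)$.

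Next I would prove $\Sat(J) \subseteq V$. By Proposition \ref{newsat}, $\Sat(J) = \overline{[N(J)T(G)]}^{\|\cdot\|_t}$, so it suffices to show $N(P(h))\,k \in V$ for every $h \in V$ and $k \in T(G)$, then pass to limits over the dense set of $P(h)$'s in $J$ (using that $u \mapsto N(u)$ is bounded $A(G) \to \frak{S}(G)$ and $\|N(u)k\|_t \le \|N(u)\|_{\frak{S}}\|k\|_t$). But this requires expressing $N(P(h))k$ in terms of bimodule/Schur-multiplier operations on $h$; the natural tool is the map $\theta_r$, since by Lemma \ref{l_pred}, $\theta_r(h) = \Delta(r^{-1})h_{r^{-1}} \in V$. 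Integrating $\theta_r(h)$ against suitable functions of $r$ — exactly as in the construction of $w_{h,L}$ in \eqref{int} and Lemma \ref{hschur}, now applied on the predual side — produces functions of the form $\chi_{L\times L}\cdot(\text{averages of } h_r)$ inside $V$ (using that $V$ is closed and $\theta_r$-invariant), and one recognizes $N(u)k$-type elements among these by the computation \eqref{eq_p}–\eqref{pn}. I would first reduce to compactly supported $h$ via Lemma \ref{tgsg}, then build $N(P(h))\chi_{L\times L}$ as such an average.

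Finally, for the reverse inclusion $V \subseteq \Sat(J)$: given $h \in V$, I want $P(h) \in J$ (immediate by definition of $J$) to force $h \in \Sat(J)$. Here I would invoke Theorem \ref{th_satlcg}: it gives $\Sat(J)^\perp = \Bim(J^\perp)$, so equivalently I must show $V \subseteq \Bim(J^\perp)_\perp = \Sat(J)$, i.e. that every $h \in V$ annihilates $J^\perp$. For $T = T_\tau \in J^\perp$ and $h \in V$ compactly supported, I would run the same decomposition as in Steps 1–6 of the proof of Theorem \ref{th_satlcg} — writing $h$ via its pieces $h^\pi_{i,j}$, $\tilde h^\pi_{i,j}$ — and use that each $\tilde h^\pi_{i,j} = N(v_{i,j})$ with $v_{i,j}A(G) \subseteq J$ (the latter because $P(\tilde h^\pi_{i,j}\,w) = v_{i,j}P(w)$ and $\tilde h^\pi_{i,j}$ is, up to the averaging in \eqref{star}, obtained from $h$ by $\theta_r$-type operations that keep things in $V$, whence $v_{i,j}P(w) = P(\text{something in }V) \in J$). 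Then $h \in \Sat(J)$ by Lemma \ref{48} and Step 6. I expect the main obstacle to be the second inclusion $\Sat(J) \subseteq V$: one must show the $\theta_r$-invariance of $V$ is strong enough to generate all of $N(J)T(G)$, which requires carefully importing the integration-over-$G$ machinery of Lemmas \ref{hrcts}–\ref{hschur} to the predual setting and checking that the resulting vector-valued integrals genuinely lie in the norm-closed subspace $V$ rather than merely in $T(G)$.
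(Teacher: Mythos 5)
Your approach takes a genuinely different route from the paper's and can be made to work, but the two halves need attention in a way roughly opposite to what you anticipate. The paper defines $J = \{u\in A(G): N(u)\chi_{L\times L}\in V \mbox{ for every compact } L\subseteq G\}$; with this \emph{local} definition the inclusion $\Sat(J)\subseteq V$ is immediate (the generators $N(u)\chi_{L\times L}$ lie in $V$ by fiat), and all the work goes into $V\subseteq\Sat(J)$, via Lemmas \ref{tgsg}, \ref{48}, \ref{neol} and the averaged element $\omega=\int_{L^{-1}K}u^\pi_{i,j}(r)\Delta(r)\theta_r(h)\,dr\in V$. Your choice $J=\overline{P(V)}$ reverses the trade-off. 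For $\Sat(J)\subseteq V$ the key identity, which you should actually write down, is
\[
\chi_{L\times L}\,N(P(h)) \;=\; (\mathbf{1}\otimes\Delta\chi_L)\int_{K^{-1}L}\chi_{L\times L}\,\theta_y(h)\,dy
\]
for $h\in V$ supported in $K\times K$ (use (\ref{eq_p}) and a change of variables); since $y\mapsto\theta_y(h)$ is norm continuous and $V$-valued, and $V$ is norm closed, the Bochner integral lies in $V$, whence $N(P(h))\chi_{L\times L}\in V$ and then $N(u)\chi_{L\times L}\in V$ for all $u\in J$ by continuity of $N$. This is, if anything, lighter than the paper's reverse direction because no representation theory is needed.

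Where your proposal goes astray is in the reverse inclusion $V\subseteq\Sat(J)$, which you make far harder than it is with your $J$. Once Theorem \ref{th_satlcg} is available, it is essentially a one-liner: for $h\in V$, $w\in\frak{S}(G)$ and $T_\tau\in J^\perp$, one has $wh\in V$ (a norm closed $L^\infty(G)$-bimodule is $\frak{S}(G)$-invariant), hence
$\langle S_w(T_\tau),h\rangle_t = \langle T_\tau, wh\rangle_t = \langle\tau,P(wh)\rangle_a = 0$
because $P(wh)\in P(V)\subseteq J$; thus $h$ annihilates $\Bim(J^\perp)=\Sat(J)^\perp$, so $h\in\Sat(J)$. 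Your plan to instead ``run Steps 1--6 of the proof of Theorem \ref{th_satlcg}'' is logically out of order: those steps take as their hypothesis that $h$ annihilates $\Bim(J^\perp)$, which is precisely what you still need to prove at that point. Moreover, the intermediate claim $v_{i,j}A(G)\subseteq J$ that you would need does not transfer to $J=\overline{P(V)}$: the paper's verification uses its local definition ($\chi_{L\times L}N(vu)\in V$ directly gives $vu\in J$), whereas with your $J$ you would only obtain $vu\phi_L\in J$ with $\phi_L=\chi_L*\check\chi_L$, and passing to $vu\in J$ would require an approximate identity, which the lemma does not assume. Fortunately, as indicated above, you do not need that claim at all.
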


\begin{proof} 
Let
\[J = \{u\in A(G) : N(u) \chi_{L\times L}  \in V 
\text{ for every compact set } L\subseteq G\}.\]
Since $A(G)$ embeds contractively into $M^{cb}A(G)$ and the map $N$ 
is continuous, it  is clear that $J$ is a closed subspace of $A(G)$.
We check that $J$ is an ideal: if $u\in J$, $v\in A(G)$
and $L\subseteq G$ is a compact subset, then
\[
N(uv)\chi_{L\times L} = (Nu)(Nv)\chi_{L\times L} \in \frak{S}(G)V\subseteq V
\]
since $N(v)\in\frak S(G)$ by Theorem  \ref{spro} and $V$, 
being a closed $L^{\infty}(G)$-bimodule, is invariant under $\frak{S}(G)$.

Clearly   $\Sat(J)\subseteq V$. To show that $V\subseteq\Sat(J)$, let $h\in V$.
By Lemma \ref{tgsg}, we may assume that $\supp h\subseteq K\times K$ 
and $h \in \frak{S}(G)\cap V$ for some compact set $K\subseteq G$.
In order to  conclude that $h\in \Sat(J)$ it suffices, by Lemma \ref{48}, to prove that
$h^{\pi}_{i,j}\chi_{L\times L} \in \Sat(J)$
for every irreducible representation $\pi$ of $G$,
every $i,j\in \bb{N}_\pi$ and every compact set $L\subseteq G$.

The function $r\to u^{\pi}_{i,j}(r) h_r$, $G\to T(G)$, is continuous (Lemma \ref{l_tr}(ii)) 
and hence the integral
$$\omega := \int_{L^{-1}K} u^{\pi}_{i,j}(r) h_r dr =
\int_{L^{-1}K} u^{\pi}_{i,j}(r) \Delta(r) \theta_r(h) dr$$
exists as a Bochner integral and defines  an element of
$T(G)$. The second equality shows that $\omega$ is in the closed 
linear span of $\{\theta_r(h):r\in G\}$.
But $V$ is invariant under $\theta_r$, and hence 
$\omega\in V$.

We claim that $ \chi_{L\times L}\omega = \chi_{L\times L} h^{\pi}_{i,j}$. To see this,
let $T = T_k$ be a Hilbert-Schmidt operator of the form
$k = f\otimes g$ with $f,g\in L^2(G)$. Then
\begin{align*}
\du{ \chi_{L\times L}\omega}{T}_t & =
\int_{L^{-1}K} u^{\pi}_{i,j}(r) \du{ \chi_{L\times L}h_r}{T}_t dr\\
& =  \int_{L^{-1}K} \iint_{G\times G} u^{\pi}_{i,j}(r) \chi_{L\times L}(s,t) h_r(s,t) f(t) g(s) dsdt dr \\
&= \du{\chi_{L\times L} h^{\pi}_{i,j}}{T}_t 
\end{align*}
(the last equality follows as in the proof of Corollary \ref{c_wi}).
This proves the claim.

Thus 
$\chi_{L\times L} h^{\pi}_{i,j}$ is in $V$.
Since $V$ is a norm closed $L^\infty(G)$ bimodule, using  Lemma \ref{neol} we conclude that
$\chi_{L\times L}\tilde{h}^{\pi}_{i,j} \in V$.
By Theorem \ref{spro}, since $\tilde{h}^{\pi}_{i,j}\in\frak S(G)$,
there exists
$v \in M^{\cb}A(G)$ such that $\tilde{h}^{\pi}_{i,j} = N(v)$.

We claim that $vA(G)\subseteq J$. Indeed, for every  $u\in A(G)$, if
$L\subseteq G$ is a compact set, 
$$\chi_{L\times L}N(vu) = (\chi_{L\times L}\tilde{h}^{\pi}_{i,j})N(u)\in V\frak{S}(G)\subseteq V$$
and thus $vu\in J$ by the definition of $J$.

 Since $P$ is surjective,  we may choose $w\in T(G)$ such that
$u:=P(w)$ satisfies $u|_{LL\an}=1$. Then
$N(u)\chi_{L\times L}=\chi_{L\times L}$ and so
\[
\tilde{h}^{\pi}_{i,j} \chi_{L\times L}=
N(v)\chi_{L\times L}=N(v)N(u)\chi_{L\times L}=N(vu)\chi_{L\times L}.
\]
 Since $vA(G)\subseteq J$, we obtain  $vu=vP(w)\in J$. Thus, 
$\tilde{h}^{\pi}_{i,j}\chi_{L\times L}\in \Sat(J)$.

Using Lemma \ref{neol} again, we obtain $\chi_{L\times L} h^{\pi}_{i,j}\in \Sat(J)$
 and the proof is complete.
\end{proof}

\begin{theorem}\label{th_chim}
Let $\cl U\subseteq \cl B(L^2(G))$ be a weak* closed subspace.
The following are equivalent:

(i) \ \ the space $\cl U$ is invariant under the mappings $S_w$ and $\Gamma(\mu)$,
for all $w\in \frak{S}(G)$ and all $\mu\in M(G)$;

(ii) \ the space $\cl U$ is invariant under the mappings $S_w$
and ${\rm Ad}\rho_r$, for all
$w\in \frak{S}(G)$ and all $r\in G$;

(iii) there exists a closed ideal $J\subseteq A(G)$ such that $\cl U = \Bim(J^{\perp})$.
\end{theorem}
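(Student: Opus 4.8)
The plan is to prove the cycle of implications (i) $\Rightarrow$ (ii) $\Rightarrow$ (iii) $\Rightarrow$ (i), since the equivalence of an $\cl U$ invariant under $\frak S(G)$ with a masa-bimodule, and the translation into pre-annihilators in $T(G)$, are already in place.

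For (i) $\Rightarrow$ (ii): this is essentially immediate. Taking $\mu = \delta_r$, the point mass at $r\in G$, we get $\Gamma(\delta_r) = {\rm Ad}\rho_r$, so invariance under all $\Gamma(\mu)$ gives invariance under each ${\rm Ad}\rho_r$. (One should remark that $\delta_r\in M(G)$ and that the weak integral defining $\Gamma(\delta_r)$ reduces to $\rho_r T\rho_r^*$.)

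For (ii) $\Rightarrow$ (iii): pass to pre-annihilators. Let $V = \cl U_{\perp}\subseteq T(G)$. Since $\cl U$ is a weak* closed masa-bimodule, $V$ is a norm closed $L^\infty(G)$-bimodule (by the duality recalled in Section \ref{s_prel}, i.e.\ \cite[Proposition 3.2]{eletod}). Invariance of $\cl U$ under ${\rm Ad}\rho_r$ is equivalent, by duality, to invariance of $V$ under the predual map $\theta_r$, which by Lemma \ref{l_pred} is (up to the scalar $\Delta(r^{-1})$) the map $h\mapsto h_{r^{-1}}$; hence $\theta_r(V)\subseteq V$ for every $r\in G$. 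Now Lemma \ref{l_exid} applies verbatim and yields a closed ideal $J\subseteq A(G)$ with $V = \Sat(J)$. Taking annihilators and invoking Theorem \ref{th_satlcg}, we obtain $\cl U = V^{\perp} = \Sat(J)^{\perp} = \Bim(J^{\perp})$, which is (iii).

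For (iii) $\Rightarrow$ (i): assume $\cl U = \Bim(J^{\perp})$. Invariance under $S_w$ for $w\in\frak S(G)$ is built into the definition of $\Bim(\cdot)$ (it is a weak* closed masa-bimodule, equivalently $\Bim(J^\perp) = \overline{[\frak S(G)J^\perp]}^{w^*}$). For invariance under $\Gamma(\mu)$: by Theorem \ref{th_satlcg}, $\cl U_{\perp} = \Sat(J)$, so it suffices to show that $\theta_r(\Sat(J))\subseteq \Sat(J)$ for every $r\in G$; then, since $\Gamma(\mu)$ is the weak integral of the maps ${\rm Ad}\rho_r$ against $\mu$ and each ${\rm Ad}\rho_r$ preserves $\cl U$ (being weak* continuous with predual $\theta_r$ preserving $\cl U_\perp$), a routine weak-integral argument shows $\Gamma(\mu)(\cl U)\subseteq\cl U$ as well. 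By Lemma \ref{l_pred}, $\theta_r(h) = \Delta(r^{-1})h_{r^{-1}}$, so the point is that $\Sat(J)$ is invariant under the translations $h\mapsto h_{r^{-1}}$. Using the description $\Sat(J) = \overline{[N(J)T(G)]}^{\|\cdot\|_t}$ from Proposition \ref{newsat}, it is enough to treat $h = N(u)k$ with $u\in J$, $k\in T(G)$: then $h_{r^{-1}}(s,t) = N(u)(sr^{-1},tr^{-1})k_{r^{-1}}(s,t)$, and since $N(u)(sr^{-1},tr^{-1}) = u\big((tr^{-1})(sr^{-1})^{-1}\big) = u(ts^{-1}) = N(u)(s,t)$ (this is exactly the right-invariance property in Theorem \ref{spro}), we get $h_{r^{-1}} = N(u)k_{r^{-1}}$; as $k_{r^{-1}}\in T(G)$ by Lemma \ref{l_tr}(ii), this lies in $N(J)T(G)\subseteq\Sat(J)$, and one passes to the closure. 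This establishes (i).

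The main obstacle is the (ii) $\Rightarrow$ (iii) step, but all its content has been packaged into Lemma \ref{l_exid}; the genuine work there — producing the ideal $J$ and showing $V\subseteq\Sat(J)$ via the $h^\pi_{i,j}$, $\tilde h^\pi_{i,j}$ machinery and Lemma \ref{48} — is done. What remains for the theorem itself is the bookkeeping: checking that the duality correctly interchanges ${\rm Ad}\rho_r$ on $\cl U$ with $\theta_r$ on $V$, and, in (iii) $\Rightarrow$ (i), justifying the interchange of the weak integral defining $\Gamma(\mu)$ with the pairing against elements of $\Sat(J)$ — for this one uses that $r\mapsto\langle{\rm Ad}\rho_r(T),h\rangle_t = \langle T,\theta_r(h)\rangle_t$ is Borel and bounded on compact sets (continuity of $r\mapsto\theta_r(h)$ from Lemma \ref{l_tr}(ii) or \ref{l_pred}) together with $\mu\in M(G)$.
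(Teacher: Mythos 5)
Your proof is correct and follows essentially the same route as the paper: (i)$\Rightarrow$(ii) via point masses, (ii)$\Rightarrow$(iii) by passing to pre-annihilators and invoking Lemmas \ref{l_pred} and \ref{l_exid} together with Theorem \ref{th_satlcg}, and (iii)$\Rightarrow$(i) by reducing to the $\theta_r$-invariance of $\Sat(J)$ and interchanging the weak integral defining $\Gamma(\mu)$ with the duality pairing. The only difference is that you spell out (using Proposition \ref{newsat} and the right-invariance of $N(u)$) why $\Sat(J)$ is $\theta_r$-invariant, a point the paper dismisses as ``clear''.
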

\begin{proof}
(i)$\Rightarrow$(ii) This follows by choosing $\mu$
to be the point mass at  $r\in G$. 

\noindent (ii)$\Rightarrow$(iii) \ Let $V = \cl U_{\perp}$;
then $V$ is a norm closed subspace of $T(G)$,
invariant under the maps of the form $m_w$ ($w\in \frak{S}(G)$) and $\theta_r$ ($r\in G$).
By Lemma \ref{l_exid}, there exists a closed ideal $J\subseteq A(G)$ such that
$V = \Sat(J)$. Hence  $\cl U = \Bim(J^{\perp})$ by Theorem \ref{th_satlcg}.

(iii)$\Rightarrow$(i) 
Let $T\in \cl U$ and $\mu\in M(G)$. To show that $\Gamma(\mu)(T)\in\cl U$
 it suffices, by Theorem \ref{th_satlcg}, to show
that $\langle \Gamma(\mu)(T),w\rangle_t = 0$ for every $w\in \Sat(J)$.
But, if $w\in \Sat(J)$ then, for all $r\in G$,
$$\langle \rho_r T \rho_r^*,w\rangle_t = \langle T,\theta_r(w)\rangle_t = 0$$
since $\Sat(J)$ is clearly $\theta_r$-invariant.    
It follows that
$$\langle \Gamma(\mu)(T),w\rangle_t
= \int_G \langle \rho_r T \rho_r^*,w\rangle_t d\mu(r) = 0 \, .$$
Since $\cl U = \Bim(J^{\perp})$ is invariant under all Schur multipliers,
the proof is complete.
\end{proof}

\begin{corollary}\label{c_lat}
Let $\cl L$ be the set of
all weak* closed subspaces of  $\cl B(L^2(G))$
which are invariant under both $\frak{S}(G)$ and $\Gamma(M(G))$. Then
$$\cl L= \{\Bim(J^{\perp}) : J\subseteq A(G) \mbox{ a closed ideal}\}$$
and $\cl L$ is a lattice under the operations of intersection
and closed linear span $\vee$. Moreover, 
$$\Bim(J_1^{\perp})\cap \Bim(J_2^{\perp}) = \Bim((J_1 + J_2)^{\perp}),$$
$$\Bim(J_1^{\perp}) \vee \Bim(J_2^{\perp}) = \Bim((J_1\cap  J_2)^{\perp}).$$
\end{corollary}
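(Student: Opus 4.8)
The plan is to deduce Corollary \ref{c_lat} from Theorem \ref{th_chim} together with the annihilator formula of Theorem \ref{th_satlcg}. The description of $\cl L$ is immediate: by Theorem \ref{th_chim}, the members of $\cl L$ are exactly the subspaces of the form $\Bim(J^\perp)$ for $J\subseteq A(G)$ a closed ideal. That $\cl L$ is closed under intersection and closed linear span $\vee$ will follow once we verify the two displayed identities, since the right-hand sides are again of the form $\Bim(J^\perp)$ with $J$ a closed ideal (recall $J_1+J_2$ need not be closed, so one should read it as the closed ideal generated by $J_1\cup J_2$, equivalently $\overline{J_1+J_2}$; I would make this convention explicit).

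For the two identities I would pass to preannihilators and use Theorem \ref{th_satlcg}, which gives $\Bim(J^\perp)_\perp=\Sat(J)$. Thus it suffices to prove, for closed ideals $J_1,J_2\subseteq A(G)$,
\[
\Sat(J_1)\vee\Sat(J_2)=\Sat(\overline{J_1+J_2})
\quad\text{and}\quad
\Sat(J_1)\cap\Sat(J_2)=\Sat(J_1\cap J_2),
\]
where on the left $\vee$ denotes closed linear span in $T(G)$; passing to annihilators in $\cl B(L^2(G))$ then turns the first into the stated $\cap$-formula and the second into the stated $\vee$-formula (using that $(\cl X\cap\cl Y)^\perp=\cl X^\perp\vee\cl Y^\perp$ and $(\cl X\vee\cl Y)^\perp=\cl X^\perp\cap\cl Y^\perp$ for weak* closed subspaces, together with the fact that $\Sat(J)$ is weak* dense — indeed norm dense — in its own bimodule so that annihilators match up correctly). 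By Proposition \ref{newsat}, $\Sat(J)=\overline{[N(J)T(G)]}^{\|\cdot\|_t}$, which makes the inclusions $\Sat(J_i)\subseteq\Sat(\overline{J_1+J_2})$ and $\Sat(J_1\cap J_2)\subseteq\Sat(J_i)$ transparent, giving "$\subseteq$" in the first identity and "$\subseteq$" in the second. For the reverse inclusion in the first identity, note $N(J_1+J_2)=N(J_1)+N(J_2)$ and $N$ is continuous, so $N(\overline{J_1+J_2})T(G)\subseteq\overline{N(J_1)T(G)+N(J_2)T(G)}\subseteq\Sat(J_1)\vee\Sat(J_2)$, and then take closures.

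The main obstacle is the nontrivial inclusion $\Sat(J_1)\cap\Sat(J_2)\subseteq\Sat(J_1\cap J_2)$. Here I would argue at the level of ideals rather than bimodules. Let $J$ be the closed ideal associated to the $L^\infty(G)$-bimodule $V:=\Sat(J_1)\cap\Sat(J_2)$ exactly as in the proof of Lemma \ref{l_exid}, namely $J=\{u\in A(G): N(u)\chi_{L\times L}\in V\text{ for all compact }L\subseteq G\}$. Since $V$ is a norm closed $L^\infty(G)$-bimodule which is $\theta_r$-invariant (being the intersection of two such, as each $\Sat(J_i)$ is clearly $\theta_r$-invariant), Lemma \ref{l_exid} yields $V=\Sat(J)$. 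It remains to identify $J$. On one hand $u\in J_1\cap J_2$ forces $N(u)\chi_{L\times L}\in\Sat(J_1)\cap\Sat(J_2)=V$, so $J_1\cap J_2\subseteq J$. On the other hand, if $u\in J$ then $N(u)\chi_{L\times L}\in\Sat(J_i)$ for each $i$; here I would invoke the description of $J_i$ in terms of its null set / the characterization of membership in $\Sat(J_i)$ coming from Theorem \ref{th_satlcg} — concretely, $\Sat(J_i)=\Bim(J_i^\perp)_\perp$, so $N(u)\chi_{L\times L}\in\Sat(J_i)$ means $\langle S_w(T),N(u)\chi_{L\times L}\rangle_t=0$ for all $T\in J_i^\perp$, $w\in\frak S(G)$; choosing $w$ and $T=T_\tau$ suitably and using \eqref{pn} as in Step 4 of the proof of Theorem \ref{th_satlcg} gives $\langle\tau,uP(w)\rangle_a=0$ for all $\tau\in J_i^\perp$, hence $uP(w)\in J_i$; letting $P(w)\to 1$ locally (as in Steps 4–5, using that $P$ is surjective and an approximate-identity argument on compact sets) yields $u\in J_i$. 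Hence $J\subseteq J_1\cap J_2$, so $J=J_1\cap J_2$ and $V=\Sat(J_1\cap J_2)$, completing the proof.
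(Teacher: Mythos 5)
Your reduction to the two $\Sat$-level identities is correct, and your argument for $\Sat(\overline{J_1+J_2}) = \Sat(J_1)\vee\Sat(J_2)$ is essentially the paper's Proposition~\ref{p_inf}(i): using continuity of $N$ to pass from $N(J_1)T(G)+N(J_2)T(G)$ to $N(\overline{J_1+J_2})T(G)$. That part is fine.

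The problem is with what you call the ``main obstacle'', $\Sat(J_1)\cap\Sat(J_2)\subseteq\Sat(J_1\cap J_2)$. You try to identify the ideal $J$ produced by Lemma~\ref{l_exid} from $V=\Sat(J_1)\cap\Sat(J_2)$ with $J_1\cap J_2$, and the step $J\subseteq J_i$ is where it breaks. You derive $\langle\tau,uP(w\chi_{L\times L})\rangle_a=0$ for all $\tau\in J_i^\perp$, $w\in\frak S(G)$ and compact $L$, hence $uA(G)\subseteq J_i$. But the final jump ``letting $P(w)\to 1$ locally \dots\ yields $u\in J_i$'' requires precisely that $u$ lie in $\overline{uA(G)}$, i.e.\ that $A(G)$ possess a (possibly local) approximate identity. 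That is exactly the hypothesis of Lemma~\ref{le_appr}, and the paper explicitly treats the injectivity of $\Sat$ (equivalently $J^\perp=\Bim(J^\perp)\cap\vn(G)$) as unknown in general — see the Corollary following Lemma~\ref{le_appr} and the Question just after it. Corollary~\ref{c_lat} carries no approximate identity assumption, so this route does not close.

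In fact this inclusion is the \emph{easy} direction and does not need Lemma~\ref{l_exid} at all. Dually it reads $\Bim((J_1\cap J_2)^\perp)\subseteq\Bim(J_1^\perp)\vee\Bim(J_2^\perp)$, and this is immediate from the elementary duality identity $(J_1\cap J_2)^\perp=\overline{J_1^\perp+J_2^\perp}^{w^*}$ together with the minimality property of $\Bim$: the right-hand side is a weak* closed masa-bimodule containing $J_1^\perp+J_2^\perp$, hence containing its weak* closure, hence containing $\Bim$ of that closure. This is exactly how the paper argues. So the direction you flagged as the hard one is in fact trivial, while the one you dismissed as ``transparent'' (which dually is $\Bim(J_1^\perp)\cap\Bim(J_2^\perp)\subseteq\Bim((J_1+J_2)^\perp)$, equivalently $\Sat(\overline{J_1+J_2})\subseteq\Sat(J_1)\vee\Sat(J_2)$) is the one that carries the real content; happily, your argument for that one is correct. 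I would rewrite the proof of the $\cap$-formula to use the duality identity $(J_1\cap J_2)^\perp=\overline{J_1^\perp+J_2^\perp}^{w^*}$ directly, and drop the appeal to Lemma~\ref{l_exid} entirely.
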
 
\begin{proof}
The description of $\cl L$ is contained in Theorem \ref{th_chim}.
The first identity will be proved in Proposition \ref{p_inf}.
The inclusion
$\Bim(J_1^{\perp}) \vee \Bim(J_2^{\perp}) \subseteq \Bim((J_1\cap  J_2)^{\perp})$
is trivial. The reverse
inclusion follows directly from the
definition of $\Bim$ and the fact that
$(J_1\cap J_2)^{\perp} = \overline{J_1^{\perp} + J_2^{\perp}}^{w^*}$.
\end{proof}

\begin{lemma}\label{le_appr}
Assume that  $A(G)$ has a (possibly unbounded)
approximate identity. Then
$J^{\bot }=\Bim (J^{\bot })\cap \vn (G)$
for every ideal $J$ of $A(G)$.
\end{lemma}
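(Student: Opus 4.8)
The inclusion $J^{\perp}\subseteq \Bim(J^{\perp})\cap\vn(G)$ is immediate, since $J^{\perp}\subseteq\Bim(J^{\perp})$ by definition and $J^{\perp}\subseteq\vn(G)$ by construction. The content is the reverse inclusion $\Bim(J^{\perp})\cap\vn(G)\subseteq J^{\perp}$. The plan is to take $T\in\Bim(J^{\perp})\cap\vn(G)$, write $T=T_\tau$ for some $\tau\in A(G)^*$, and show that $\du{\tau}{u}_a=0$ for every $u\in J$. By Theorem \ref{th_satlcg}, $\Bim(J^{\perp})=\Sat(J)^{\perp}$, so $\du{T}{w}_t=0$ for every $w\in\Sat(J)$; in particular, by Proposition \ref{newsat}, $\du{T}{N(u)h}_t=0$ for all $u\in J$ and $h\in T(G)$.

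The key computation is to relate $\du{T}{N(u)h}_t$ back to the $A(G)$-pairing. Using relation (\ref{pn}), which gives $P(N(u)h)=uP(h)$, together with the defining relation (\ref{P}) of the predual $P$, we get
\[
0=\du{T}{N(u)h}_t=\du{\tau}{P(N(u)h)}_a=\du{\tau}{uP(h)}_a
\]
for every $u\in J$ and every $h\in T(G)$. Since $P:T(G)\to A(G)$ is surjective, this says precisely that $\du{\tau}{uv}_a=0$ for all $u\in J$ and all $v\in A(G)$; equivalently, $uv\in\ker\tau$ for all such $u,v$, i.e.\ $\tau$ annihilates the ideal $JA(G)$ (the linear span of products).

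The final step is where the approximate identity enters. We must deduce $\du{\tau}{u}_a=0$ for $u\in J$ itself, not merely for products $uv$. Let $(e_\alpha)$ be an approximate identity for $A(G)$. For $u\in J$ we have $ue_\alpha\in JA(G)$, so $\du{\tau}{ue_\alpha}_a=0$ for every $\alpha$; since $ue_\alpha\to u$ in the norm of $A(G)$ and $\tau$ is bounded, passing to the limit gives $\du{\tau}{u}_a=0$. Hence $\tau\in J^{\perp}$ in $A(G)^*$, which means $T=T_\tau\in J^{\perp}$ as a subspace of $\vn(G)$, completing the argument.

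The only real obstacle is the last step: without an approximate identity one cannot in general recover $u$ from the products $uv$, and indeed the conclusion can fail. Everything else is a routine unravelling of the dualities, with Theorem \ref{th_satlcg} doing the heavy lifting of identifying $\Bim(J^{\perp})$ with $\Sat(J)^{\perp}$. One should be mildly careful that the approximate identity need not be bounded, but boundedness of $\tau$ and norm convergence $ue_\alpha\to u$ are all that is used, so this causes no difficulty.
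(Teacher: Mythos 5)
Your proof is correct and follows essentially the same route as the paper: reduce via Theorem \ref{th_satlcg} and Proposition \ref{newsat} to $\du{\tau}{uP(h)}_a=0$, use surjectivity of $P$, then invoke the approximate identity to pass from products to $u$ itself. The paper phrases the last step slightly differently (choosing a net $h_i$ with $vP(h_i)\to v$ rather than naming an approximate identity $(e_\alpha)$), but the content is identical.
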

\begin{proof}
Let $T \in \Bim (J^{\bot })\cap \vn (G)$. Since $T$ is in $\vn (G)$, it is of the form
 $T=P^*(\tau)$ for some $\tau\in A(G)^*$ (see relation (\ref{P})).
By Theorem \ref{th_satlcg},
$T\in (\Sat(J))^{\bot}$. 
By Proposition \ref{newsat},
for all $v\in  J$ and all $h\in T(G)$, 
\[
0=\du{T}{(N(v)h}_t = \du{P^*(\tau)}{N(v)h}_t =
\du{\tau}{P(N(v)h)}_a = \du{\tau}{vP(h)}_a
\]
(using relation (\ref{pn})).
Since $A(G)$ has an approximate identity  and
the map $P : T(G) \to A(G)$ is surjective,
there is a net $(h_i)$ in $T(G)$ such that
$vP(h_i)\to v$ in $A(G)$.
It follows that
$\du{\tau}{v}_a=0$ for all $v\in J$
and therefore $T\in J^{\bot }$ as claimed.
\end{proof}

\begin{remark} {\rm It follows from Theorem \ref{th_chim} that
$\Bim$ maps the set of all weak* closed $A(G)$-invariant 
subspaces in $\vn(G)$ onto
the set of all weak* closed masa-bimodules in $\cl B(L^2(G))$
which are invariant under
conjugation by $\rho_r$, $r\in G$.

Using Theorem \ref{th_satlcg} we see that $\Sat$
maps the set of all closed ideals of $A(G)$ onto
the set of all closed $L^{\infty}(G)$-bimodules in $T(G)$ which are invariant under
$h\to h_r$. }
\end{remark}

Combining this with Lemma \ref{le_appr} gives the following:

\begin{corollary}
If $A(G)$ has an approximate identity,
the maps $\Bim$ and $\Sat$ are bijective.
\end{corollary}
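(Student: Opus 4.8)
The plan is to combine the two bijectivity statements that are essentially already proved. First I would recall what must be shown: $\Bim$ is a bijection from the set $\mathcal{I}$ of weak* closed $A(G)$-invariant subspaces of $\vn(G)$ (equivalently, from the set of annihilators $J^\perp$ of closed ideals $J\subseteq A(G)$) onto its image, and $\Sat$ is a bijection from the set of closed ideals of $A(G)$ onto its image; and moreover the images are the "right" classes described in the preceding remark. Surjectivity onto those classes is exactly the content of Theorem \ref{th_chim} (via Lemma \ref{l_exid}) together with the remark, so the only remaining point is injectivity.

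For injectivity of $\Bim$: suppose $J_1, J_2$ are closed ideals of $A(G)$ with $\Bim(J_1^\perp)=\Bim(J_2^\perp)$. Intersecting with $\vn(G)$ and invoking Lemma \ref{le_appr} (which applies since $A(G)$ has an approximate identity), we get $J_1^\perp = \Bim(J_1^\perp)\cap\vn(G) = \Bim(J_2^\perp)\cap\vn(G) = J_2^\perp$, and hence $J_1=J_2$ by the bipolar theorem (closed ideals are recovered from their annihilators). Since the weak* closed $A(G)$-invariant subspaces of $\vn(G)$ are precisely the spaces $J^\perp$, this shows $\Bim$ is injective on $\mathcal{I}$; combined with Theorem \ref{th_chim} it is a bijection onto the stated class.

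For injectivity of $\Sat$: suppose $\Sat(J_1)=\Sat(J_2)$. Applying Theorem \ref{th_satlcg} gives $\Bim(J_1^\perp) = \Sat(J_1)^\perp = \Sat(J_2)^\perp = \Bim(J_2^\perp)$, and then the injectivity of $\Bim$ just established forces $J_1=J_2$. Surjectivity of $\Sat$ onto the class of closed $L^\infty(G)$-bimodules in $T(G)$ invariant under $h\mapsto h_r$ is exactly Lemma \ref{l_exid} together with the observation (in the remark) that $\Sat(J)$ always lies in that class. Thus $\Sat$ is a bijection.

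I do not expect any serious obstacle here: the statement is a formal consequence of Lemma \ref{le_appr}, Theorem \ref{th_satlcg}, Theorem \ref{th_chim}, and the standard annihilator duality between closed ideals of $A(G)$ and weak* closed invariant subspaces of $\vn(G)$. The one place to be slightly careful is to phrase "bijective" precisely — i.e. to name the domains and codomains of $\Bim$ and $\Sat$ as in the remark — since neither map is a bijection on all of $\cl B(L^2(G))$'s masa-bimodules without the $\rho_r$-invariance (resp. $h\mapsto h_r$-invariance) restriction; the approximate identity hypothesis is what upgrades the surjections of Theorem \ref{th_chim} to injections as well.
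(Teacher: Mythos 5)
Your proof is correct and follows essentially the same route as the paper: the paper states the corollary as a combination of the preceding remark (which records the surjectivity of $\Bim$ and $\Sat$ onto the classes described there, via Theorems \ref{th_chim} and \ref{th_satlcg}) with Lemma \ref{le_appr}, and your argument is exactly the unwinding of that — Lemma \ref{le_appr} lets you recover $J^\perp$ as $\Bim(J^\perp)\cap\vn(G)$, which gives injectivity of $\Bim$, and injectivity of $\Sat$ then follows by passing through Theorem \ref{th_satlcg}. Your closing remark about being precise with domains and codomains is apt, since that is exactly what the preceding remark in the paper supplies.
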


\noindent Note that the class of groups for which $A(G)$ 
possesses an approximate identity contains, but is
strictly larger than, the class of all amenable groups
(see \cite[Remark 4.5]{lt} for a relevant discussion).
It is unknown whether this class contains all locally compact groups;
it does contain those groups having the `approximation 
property' of Haagerup and Kraus \cite{hakr}. It is now known that there 
are groups failing the approximation property \cite{laf, hala}.

\begin{question}
Does the conclusion of  Lemma \ref{le_appr} hold for an arbitrary second countable
locally compact group?
\end{question}

\noindent {\bf Remark }
Theorem \ref{th_chim} describes the class of all weak* closed subspaces of
 $\cl B(L^2(G))$ which are invariant under
 $\Gamma(M(G))$ and under all Schur multipliers. If instead we consider
only invariant Schur multipliers, namely $N(M^{\cb}A(G))$, we obtain a strictly
larger class; consider, for example,  $\vn(G)$.

\section{The extremal bimodules}\label{s_eb}

In this section, we relate Theorem \ref{th_satlcg} to the
extremal masa-bimodules associated with a subset of $G\times G$
\lq\lq of Toeplitz type''.
We start by recalling some notions and results from \cite{a} and \cite{eks}
in the special case that we will use.

A subset $E$ of $G\times G$ is called {\it $\omega$-open}
if it is marginally equivalent to the union of a countable set of Borel rectangles. 
The complements of $\omega$-open sets are called {\it $\omega$-closed}.
If $F \subseteq G\times G$ is an $\omega$-closed set, an operator $T\in \cl B(L^2(G))$
is said to be \emph{supported by} $F$ if
$$(A \times B)\cap F \simeq \emptyset \ \Longrightarrow \  P(B)TP(A) = 0,$$
for all measurable rectangles $A\times B\subseteq G\times G$, where $P(A)$
denotes  the  orthogonal projection from $L^2(G)$ onto $L^2(A)$.
Given a masa-bimodule $\cl U$, there exists a
smallest, up to marginal equivalence, $\omega$-closed subset
$F \subseteq G\times G$
such that every operator in $\cl U$ is supported by $F$;
we call $F$ the \emph{support} of $\cl U$.
Given an $\omega$-closed set $F \subseteq G\times G$,
there exists \cite{a} a largest weak* closed masa-bimodule
$\frak{M}_{\max}(F)$ and a smallest weak* closed masa-bimodule
$\frak{M}_{\min}(F)$ with support $F$. The masa-bimodule
$\frak{M}_{\max}(F)$ is the space
of all $T\in \cl B(L^2(G))$ supported on $F$.
We say that an $\omega$-closed set $F \subseteq X\times Y$ satisfies
{\em operator synthesis} if $\frak{M}_{\max}(F)=\frak{M}_{\min}(F)$.

Let $F$ be an $\omega$-closed subset of $G\times G$; define
$$\Phi(F) = \{\psi \in T(G) : \psi\chi_{F} = 0 \mbox{ m.a.e.}\}$$
and
$$\Psi(F) = \overline{\{\psi \in T(G) : \psi \mbox{ vanishes on an $\omega$-open neighbourhood of }F\}}^{\|\cdot\|_t}.$$
It was shown in \cite{st1} that $\Phi(F)^{\perp} = \frak{M}_{\min}(F)$ and
$\Psi(F)^{\perp} = \frak{M}_{\max}(F)$.
For an $L^{\infty}(G)$-bimodule $V$ in $T(G)$, we let
$\nul (V)$ be the largest, up to marginal equivalence, 
$\omega$-closed subset $F$ of $G\times G$
such that $h|_F = 0$ for all $h\in V$ \cite{st1}.
Then a closed
$L^{\infty}(G)$-bimodule $V\subseteq T(G)$
satisfies $\Psi(F)\subseteq V \subseteq \Phi(F)$ if and only if $\nul (V) = F$.

\smallskip

For a closed set  $E\subseteq G$, let
\begin{align*}
I(E)& =\{f\in A(G) : f(x)=0, x\in E\} \quad\text{and }\;  J(E)=\overline{J_0(E)}, \\
\text{where }\;\;
J_0(E)& =\{f\in A(G) : f \mbox{ vanishes on a neighbourhood of } E\}.
\end{align*}
If $J\subseteq A(G)$ is a closed ideal, denote by $Z(J)$ the set of common zeroes
of functions in $J$: 
\[Z(J) = \{s\in G : f(s) = 0 \mbox{ for all } f\in J\}.\]
Then
$J(E)\subseteq J\subseteq I(E)$ if and only if $Z(J) = E$.
If $J(E) = I(E)$ then one says that $E$ satisfies {\em spectral synthesis}.

For a subset $E\subseteq G$, we set
\[
E^* = \{(s,t) : ts^{-1}\in E\}.
\]

The relation between the notions of a null set and a zero set is described
in the next proposition. 

\begin{proposition}\label{p_nulls}
Let $J\subseteq A(G)$ be a closed ideal. Then $\nul (\Sat(J)) = (Z(J))^*$.
\end{proposition}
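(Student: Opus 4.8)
The goal is to show that the $\omega$-closed set $\nul(\Sat(J))$, i.e.\ the largest $\omega$-closed set on which every element of $\Sat(J)$ vanishes marginally almost everywhere, equals $(Z(J))^*=\{(s,t):ts\an\in Z(J)\}$. I would prove the two inclusions of the associated null/zero sets separately, using the description $\Sat(J)=\overline{[N(J)T(G)]}^{\|\cdot\|_t}$ from Proposition \ref{newsat} and the formula $P(N(u)h)=uP(h)$ from (\ref{pn}), together with the surjectivity of $P:T(G)\to A(G)$.

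\textbf{Step 1: $(Z(J))^*\subseteq \nul(\Sat(J))$.} Fix $u\in J$ and $h\in T(G)$; I must check that $N(u)h$ vanishes m.a.e.\ on $(Z(J))^*$. Since $N(u)(s,t)=u(ts\an)$ and $(s,t)\in (Z(J))^*$ means $ts\an\in Z(J)$, we have $u(ts\an)=0$ for every $u\in J$, so $N(u)$ itself vanishes \emph{everywhere} on $(Z(J))^*$, hence so does $N(u)h$. As $\Sat(J)$ is the $\|\cdot\|_t$-closure of linear combinations of such products, and the null set of an $L^\infty$-bimodule is stable under closure (by \cite{st1}, as recalled in Section \ref{s_eb}), every element of $\Sat(J)$ vanishes m.a.e.\ on $(Z(J))^*$. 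One should note $(Z(J))^*$ is $\omega$-closed: it is $N(E)$-type where $E=Z(J)$ is closed, and such ``Toeplitz'' sets built from closed subsets of $G$ are $\omega$-closed. Hence $(Z(J))^*\subseteq \nul(\Sat(J))$ by maximality of the latter.

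\textbf{Step 2: $\nul(\Sat(J))\subseteq (Z(J))^*$.} Write $F=\nul(\Sat(J))$. It suffices to show that $(G\times G)\setminus (Z(J))^*\subseteq (G\times G)\setminus F$ up to marginal equivalence, i.e.\ that near any point $(s_0,t_0)$ with $t_0s_0\an\notin Z(J)$ there is an element of $\Sat(J)$ that does not vanish. Since $t_0s_0\an\notin Z(J)$, there is $u\in J$ with $u(t_0s_0\an)\neq 0$, and by continuity $u$ is bounded away from zero on a neighbourhood $W$ of $t_0s_0\an$. Pick a small compact neighbourhood $L$ of $s_0$ (and correspondingly of $t_0$) so that $ts\an\in W$ for $(s,t)$ in a neighbourhood $U\subseteq L\times L$ of $(s_0,t_0)$; then choose $h\in T(G)$ with $\chi_{L\times L}h$ not marginally null on $U$ — e.g.\ $h=\chi_L\otimes\chi_L$. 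The element $N(u)h$ lies in $\Sat(J)$ and on $U$ equals $u(ts\an)h(s,t)$, which is nonzero on a non-marginally-null subset of $U$. Hence $(s_0,t_0)$ cannot lie in the interior (in the $\omega$-topology sense) relevant to $F$; more precisely, $N(u)h$ does not vanish m.a.e.\ on any $\omega$-open set meeting $U$, so $U\cap F\simeq\emptyset$, giving $(s_0,t_0)\notin F$ up to marginal equivalence.

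\textbf{Main obstacle.} The technical heart is making the measure-theoretic bookkeeping in Step 2 precise: the null set $\nul(\Sat(J))$ is defined only up to marginal equivalence and the ``not vanishing on a neighbourhood'' argument must be phrased so that it genuinely excludes $(s_0,t_0)$ from an $\omega$-closed set, not merely from a single point (which is marginally null and hence invisible). The clean way is to argue directly with the characterisation: $F$ is the largest $\omega$-closed set with $h|_F=0$ m.a.e.\ for all $h\in\Sat(J)$; if $F\not\subseteq (Z(J))^*$ then $F\setminus (Z(J))^*$ is $\omega$-closed and not marginally null, yet by the construction above every point of it has a neighbourhood rectangle on which some element of $\Sat(J)$ is non-null, contradicting (via a covering/exhaustion argument using second countability) that all elements of $\Sat(J)$ vanish m.a.e.\ on $F$. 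The remaining routine points — that $(Z(J))^*$ is $\omega$-closed and that $ts\an$ lands in $W$ for $(s,t)$ near $(s_0,t_0)$ — follow from continuity of the group operations and the fact that $Z(J)$ is closed.
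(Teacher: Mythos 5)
Your proof is correct and follows essentially the same two-inclusion strategy as the paper's. Step~1 is identical in substance. In Step~2 the paper chooses, via regularity of $A(G)$ (Lemma~3.2 of Eymard), a $v\in J_0(Z(J))\subseteq J$ with $v\equiv 1$ on a compact neighbourhood $U$ of $t_0s_0\an$, so that $(Nv)\chi_{U\times U}$ is \emph{identically} $1$ on a rectangle $K\times L$ around $(s_0,t_0)$; you instead take any $u\in J$ with $u(t_0s_0\an)\neq 0$ and use continuity to keep $N(u)$ bounded away from zero on a rectangle. Both work; the paper's choice is marginally cleaner in that the resulting element of $\Sat(J)$ is exactly $\chi_{K\times L}$ on the relevant rectangle, so there is no lower-bound estimate to carry. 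One small slip: in your ``Main obstacle'' paragraph you assert that $F\setminus (Z(J))^*$ is $\omega$-closed, which is not generally true (it is the intersection of an $\omega$-closed set with an $\omega$-open set). The correct formulation --- which you actually use in the body of Step~2 and which the paper makes explicit --- is to cover $(G\times G)\setminus (Z(J))^*$ by countably many $\omega$-open rectangles each disjoint from $\nul\Sat(J)$ up to marginal nullity (second countability), giving an $\omega$-open neighbourhood of $((Z(J))^*)^c$ disjoint from $\nul\Sat(J)$, whence $\nul\Sat(J)\subseteq (Z(J))^*$ up to marginal equivalence.
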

\begin{proof}
Let $E = Z(J)$.
By the definition of $\Sat(J)$, using \cite[Lemma 2.1]{st1} 
every element $h$ of $\Sat(J)$ is a m.a.e. limit
of a  sequence of finite sums of elements of the form
$\phi_iN(u_i)\chi_{L_i\times L_i}$, where $\phi_i \in \mathfrak{S}(G)$,
$u_i \in J$ and $L_i$ is a compact subset of $G$.   But
$ N(u)\chi_{L\times L}$ vanishes on $E^*$ for every $u \in J$
and every compact subset $L$ of $G$. Thus $h$ vanishes m.a.e.  on $E^*$ and
it follows that $E^*\subseteq \nul \Sat(J)$.

Conversely, suppose that $(s,t)\not\in E^*$, that is, $ts^{-1}\not\in E$.
Let $U$ be a compact neighbourhood of $ts^{-1}$ disjoint from $E$
and let $v\in A(G)$ be a function vanishing on an open neighbourhood of $E$
such that $v|_{U} = 1$ \cite[Lemma 3.2]{eym}. Then $v\in J$ and, if $K, L\subseteq G$
are compact neighbourhoods of $s$ and $t$ such that
$LK^{-1}\subseteq U$, then $(Nv)\chi_{U\times U}$
takes the value $1$ on $K\times L$.
Thus every  $(s,t)\not\in E^*$ has a relatively compact open neighbourhood
$W_{(s,t)}\subseteq K\times L$
disjoint from $\nul \Sat(J)$ up to a marginally null set.
Taking a countable subcover of the cover
$\{W_{(s,t)}\}$ of $(E^*)^c$   we obtain an $\omega$-open neighbourhood
of $(E^*)^c$ disjoint from $\nul \Sat(J)$ up to a marginally null set.
It follows that $\nul \Sat(J)\subseteq E^*$ up to a marginally null set
and the proof is complete.
\end{proof}

\begin{corollary}\label{c_im}
Let $V\subseteq T(G)$ be a norm closed
$L^{\infty}(G)$-bimodule such that
if $h\in V$ then $h_r\in V$, for every $r\in G$. 
Then there exists a closed subset $E\subseteq G$
such that $\nul V = E^*$.
\end{corollary}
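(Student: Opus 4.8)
The plan is to combine Lemma \ref{l_exid} with Proposition \ref{p_nulls}. First I would observe that the hypothesis on $V$ is exactly the hypothesis of Lemma \ref{l_exid}: $V$ is a norm closed $L^\infty(G)$-bimodule of $T(G)$, and the assumption ``$h\in V \Rightarrow h_r\in V$ for every $r\in G$'' is, by Lemma \ref{l_pred}, the statement that $\theta_r(V)\subseteq V$ for every $r\in G$ (since $\theta_r(h)=\Delta(r^{-1})h_{r^{-1}}$ and $r\mapsto r^{-1}$ ranges over all of $G$). Hence Lemma \ref{l_exid} applies and yields a closed ideal $J\subseteq A(G)$ with $V=\Sat(J)$.

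Then I would simply invoke Proposition \ref{p_nulls}, which gives $\nul(\Sat(J))=(Z(J))^*$. Setting $E=Z(J)$, which is a closed subset of $G$ because it is the common zero set of a family of continuous functions (equivalently, the hull of the ideal $J$ in the spectrum $G$ of $A(G)$), we conclude $\nul V=\nul(\Sat(J))=E^*$, as required.

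There is really no main obstacle here, since all the work has been done in the two results being cited; the only point requiring a word of care is the translation between the two phrasings of the invariance hypothesis (``$h_r\in V$'' versus ``$\theta_r(V)\subseteq V$''), which is immediate from Lemma \ref{l_pred} together with the fact that inversion is a bijection of $G$ and that scalar multiples of elements of a subspace lie in the subspace. I would state this translation explicitly and then let Lemma \ref{l_exid} and Proposition \ref{p_nulls} do the rest.

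\begin{proof}
By Lemma \ref{l_pred}, $\theta_r(h)=\Delta(r^{-1})h_{r^{-1}}$ for every $h\in T(G)$ and $r\in G$. Since $V$ is a subspace and $r\mapsto r^{-1}$ is a bijection of $G$, the hypothesis that $h_r\in V$ for all $h\in V$ and all $r\in G$ is equivalent to the condition $\theta_r(V)\subseteq V$ for every $r\in G$. Thus $V$ is a norm closed $L^{\infty}(G)$-bimodule satisfying the hypotheses of Lemma \ref{l_exid}, and there exists a closed ideal $J\subseteq A(G)$ with $V=\Sat(J)$. Put $E=Z(J)$; as the set of common zeroes of a family of continuous functions, $E$ is a closed subset of $G$. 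By Proposition \ref{p_nulls}, $\nul V=\nul(\Sat(J))=(Z(J))^*=E^*$.
\end{proof}
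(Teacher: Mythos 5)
Your proof is correct and follows exactly the same route as the paper: apply Lemma \ref{l_exid} to obtain $V=\Sat(J)$ for some closed ideal $J$, then set $E=Z(J)$ and invoke Proposition \ref{p_nulls}. The only addition is the (correct and worthwhile) explicit remark that the hypothesis $h_r\in V$ is the same as $\theta_r$-invariance via Lemma \ref{l_pred}, which the paper leaves implicit.
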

\begin{proof}
By  Lemma \ref{l_exid}, there exists an ideal $J\subseteq A(G)$ such that
$V = \Sat(J)$. Let $E = Z(J)$; by Proposition \ref{p_nulls}, $\nul V = E^*$.
\end{proof}

\begin{theorem}\label{th_mma}
Let $E\subseteq G$ be a closed set. The following hold:

(i) \ $\Bim(I(E)^{\perp}) = \frak{M}_{\min}(E^*)$;

(ii) $\Bim(J(E)^{\perp}) = \frak{M}_{\max}(E^*)$.
\end{theorem}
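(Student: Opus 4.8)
The plan is to deduce Theorem \ref{th_mma} from the annihilator formula of Theorem \ref{th_satlcg} together with the identification of $\frak{M}_{\min}$ and $\frak{M}_{\max}$ via the bimodules $\Phi$ and $\Psi$. The starting point is the observation that $\Bim(I(E)^\perp)^\perp = \Sat(I(E))$ and $\Bim(J(E)^\perp)^\perp = \Sat(J(E))$ by Theorem \ref{th_satlcg}. Hence it suffices to prove the pre-annihilator identities
\[
\Sat(I(E)) = \Phi(E^*) \qquad \text{and} \qquad \Sat(J(E)) = \Psi(E^*),
\]
since $\Phi(E^*)^\perp = \frak{M}_{\min}(E^*)$ and $\Psi(E^*)^\perp = \frak{M}_{\max}(E^*)$ by \cite{st1}, as recalled in the excerpt. (Here $E^* = \{(s,t) : ts^{-1}\in E\}$ is $\omega$-closed, being marginally equivalent to a countable union of Borel rectangles, so the extremal bimodules are defined.)

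For part (i), I would first check $\Sat(I(E)) \subseteq \Phi(E^*)$: by Proposition \ref{newsat}, $\Sat(I(E)) = \overline{[N(I(E))T(G)]}^{\|\cdot\|_t}$, and for $u\in I(E)$ the function $N(u)$ vanishes on $E^*$ (since $N(u)(s,t)=u(ts^{-1})$), hence so does $N(u)h$ for every $h\in T(G)$, and $\Phi(E^*)$ is norm closed. For the reverse inclusion, the clean route is to use the general machinery: by Proposition \ref{p_nulls}, $\nul(\Sat(I(E))) = (Z(I(E)))^* = E^*$, since $Z(I(E)) = E$. Now $\Sat(I(E))$ is a norm closed $L^\infty(G)$-bimodule, so the characterisation recalled just before Proposition \ref{p_nulls} gives $\Psi(E^*)\subseteq \Sat(I(E))\subseteq \Phi(E^*)$. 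To pin down the upper extremal case, I must show $\Sat(I(E))$ is as large as possible, i.e. equals $\Phi(E^*)$; equivalently its annihilator is as small as possible. Here I would argue that $I(E)$ is the \emph{largest} ideal with zero set $E$, and use that $\Sat$ is inclusion-preserving together with the fact that $\Phi(E^*)_\perp^\perp = \Phi(E^*)$ is determined by its null set among bimodules maximal for that null set — but more directly: any $h\in\Phi(E^*)$ can be approximated, via Lemma \ref{tgsg}, by compactly supported pieces in $\frak S(G)$, and for such a piece one runs the decomposition of Lemma \ref{48} into $h^\pi_{i,j}\chi_{L\times L}$ components; each such component, being of the form $\tilde h^\pi_{i,j}$-type (via Lemma \ref{neol}) is $N(v)$ for some $v\in M^{cb}A(G)$, and the vanishing of $h$ on $E^*$ forces $v$ (restricted suitably) to lie in $I(E)$, whence the piece lies in $\Sat(I(E))$. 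This mirrors Steps 4–6 of the proof of Theorem \ref{th_satlcg}.

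For part (ii), $\Sat(J(E)) \subseteq \Psi(E^*)$: if $u\in J_0(E)$ vanishes on a neighbourhood $U$ of $E$, then $N(u)$ vanishes on the $\omega$-open neighbourhood $U^* = \{(s,t): ts^{-1}\in U\}$ of $E^*$, so $N(u)h$ does too for $h\in T(G)$ — well, one needs the product to vanish on an $\omega$-open neighbourhood, which it does since $N(u)$ already does — hence $N(u)h\in\Psi(E^*)$; then pass to the norm closure and use Proposition \ref{newsat}. For the reverse inclusion $\Psi(E^*)\subseteq\Sat(J(E))$: by Proposition \ref{p_nulls}, $\nul(\Sat(J(E))) = (Z(J(E)))^* = E^*$ (since $Z(J(E))=E$), so by the bimodule characterisation $\Psi(E^*)\subseteq\Sat(J(E))\subseteq\Phi(E^*)$, and the left inclusion is exactly what we want.

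I expect the main obstacle to be the upper-extremal inclusion $\Phi(E^*)\subseteq\Sat(I(E))$ in part (i): unlike $\Psi(E^*)\subseteq\Sat(I(E))$, which comes for free from the null-set characterisation, showing that $\Sat(I(E))$ is all of $\Phi(E^*)$ requires genuinely exploiting that $I(E)$ is the \emph{maximal} ideal with zero set $E$, which in turn rests on the regularity of $A(G)$ and a synthesis-type argument for the individual Schur-multiplier components $h^\pi_{i,j}\chi_{L\times L}$ — essentially re-running the hard direction of Theorem \ref{th_satlcg} while tracking that the multipliers $v$ produced actually vanish on (a neighbourhood-free version of) $E$. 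An alternative, cleaner packaging: observe that for \emph{any} closed ideal $J$ with $Z(J)=E$ one has $\Psi(E^*)\subseteq\Sat(J)\subseteq\Phi(E^*)$ by Proposition \ref{p_nulls}; apply this with $J=I(E)$ and $J=J(E)$; then note that $J(E)$ being the smallest and $I(E)$ the largest such ideal, monotonicity of $\Sat$ forces $\Sat(J(E))$ and $\Sat(I(E))$ to be the smallest and largest closed $L^\infty(G)$-bimodules with null set $E^*$, namely $\Psi(E^*)$ and $\Phi(E^*)$ respectively — but justifying that the extreme \emph{ideals} map to the extreme \emph{bimodules} under $\Sat$ still needs the surjectivity-type argument of Lemma \ref{l_exid}. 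I would present the proof in the two-inclusions-each form above, citing Proposition \ref{newsat}, Proposition \ref{p_nulls}, Theorem \ref{th_satlcg}, and \cite{st1}.
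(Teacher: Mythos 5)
Your overall structure is right, and both of the ``easy'' inclusions ($\Sat(I(E))\subseteq\Phi(E^*)$ and $\Sat(J(E))\subseteq\Psi(E^*)$, together with $\Psi(E^*)\subseteq\Sat(J(E))$ from the null-set computation) are handled essentially as in the paper, with Proposition~\ref{newsat} and Proposition~\ref{p_nulls} doing the work. Part~(ii) of your proposal is in substance the paper's proof.

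Where you diverge is the hard inclusion in part~(i), namely $\Phi(E^*)\subseteq\Sat(I(E))$, or dually $\Bim(I(E)^\perp)\subseteq\frak{M}_{\min}(E^*)$. You correctly flag this as the non-automatic step, but you propose re-running the decomposition machinery of Steps 4--6 of Theorem~\ref{th_satlcg} (Lemma~\ref{tgsg}, Lemma~\ref{48}, Lemma~\ref{neol}), identifying each piece $\tilde h^\pi_{i,j}$ as $N(v)$ and arguing that $v$ ``vanishes on $E$''. That route can be made to work, but it is considerably longer than necessary, and it also requires you to be careful that $v\in M^{\cb}A(G)$ rather than $A(G)$, so you cannot directly conclude $v\in I(E)$; you need to first multiply by a cutoff $u=P(w)$ as in Step~4 of Theorem~\ref{th_satlcg} or Lemma~\ref{l_exid}. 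The paper instead gives a short direct argument at the operator level: for $T=T_\tau\in I(E)^\perp$ and $w\in\frak S(G)$, and any $h\in\Phi(E^*)$, one has $wh\in\Phi(E^*)$, and formula~(\ref{eq_p}) shows immediately that $P$ carries $\Phi(E^*)$ into $I(E)$ (if $s\in E$ then $t(s^{-1}t)^{-1}=s\in E$, so the integrand in~(\ref{eq_p}) vanishes); hence $\langle S_w(T),h\rangle_t=\langle\tau,P(wh)\rangle_a=0$. This shows each generator $S_w(T)$ of $\Bim(I(E)^\perp)$ lies in $\Phi(E^*)^\perp=\frak{M}_{\min}(E^*)$, with no recourse to the representation-theoretic decomposition at all. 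So your proposal is correct in outline but misses the one-line observation about $P$ and $E^*$ that makes the proof short; you should replace the heavy synthesis argument by this direct computation.

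One further small remark: your ``alternative, cleaner packaging'' at the end does not by itself close the gap, as you acknowledge — monotonicity of $\Sat$ gives $\Sat(J(E))\subseteq\Sat(J)\subseteq\Sat(I(E))$ for ideals $J$ with $Z(J)=E$, but does not show $\Sat(I(E))$ is the largest bimodule with null set $E^*$ among \emph{all} bimodules, only among those in the range of $\Sat$. The direct argument above is what actually establishes this.
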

\begin{proof}
(i) By Theorem \ref{th_satlcg}, $\Bim(I(E)^{\perp})_{\perp} = \Sat({I(E)})$.
By Proposition \ref{p_nulls}, $\nul \Sat({I(E)}) = E^*$ and hence
$\frak{M}_{\min}(E^*) \subseteq \Bim(I(E)^{\perp})$ by the minimality of $\frak{M}_{\min}(E^*)$.

To prove the reverse inclusion,  
let $T = T_{\tau}\in I(E)^{\perp}$ and $w\in \frak{S}(G)$.
If $h\in T(G)$ vanishes on $E^*$ then $wh$ vanishes on $E^*$.
Relation (\ref{eq_p}) now shows that $P(wh)\in I(E)$ and so 
$$\langle S_w(T),h\rangle_t = \langle T,wh\rangle_t
= \langle \tau, P(wh)\rangle_a = 0,$$
showing that $S_w(T)\in \Phi(E^*)^\perp=\frak{M}_{\min}(E^*)$. 
Thus, $\Bim(I(E)^{\perp}) \subseteq \frak{M}_{\min}(E^*)$ and the proof is complete.

(ii)                
Observe that for each $v\in J_0(E)$ and each compact set $L\subseteq G$,
the element $N(v)\chi_{L\times L}$ is in $\Psi(E^*)$. 
By continuity of the map $N$, the same holds for $v\in J(E)$.
It follows from the definition of
$\Sat(J(E))$ that $\Sat(J(E))\subseteq \Psi(E^*)$.

On the other hand, by Proposition \ref{p_nulls}, $\nul \Sat({J(E)}) = E^*$ and,
since $\Sat(J(E))$ is a closed $L^{\infty}(G)$-bimodule,
the minimality property of $\Psi(E^*)$ shows that
$\Psi(E^*)\subseteq \Sat(J(E))$.

Hence $\Sat({J(E)}) = \Psi(E^*)$. By \cite{st1} and Theorem \ref{th_satlcg},
 $\Bim(J(E)^{\perp}) = \frak{M}_{\max}(E^*)$.
\end{proof}

It is worthwhile to isolate the following characterisation of reflexive jointly 
invariant subspaces, which is is an immediate consequence of Theorem \ref{th_chim}.

\begin{theorem}\label{refl}Let $\cl U\subseteq \cl B(L^2(G))$ be a 
reflexive subspace.
Then  $\cl U$ is invariant under all mappings $S_w, w\in \frak{S}(G)$
and ${\rm Ad}\rho_r, r\in G$, 
if and only if 
there exists a closed set $E\subseteq G$ such that $\cl U = 
\frak{M}_{\max}(E^*)$.
\end{theorem}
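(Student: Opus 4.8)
The plan is to obtain both implications as formal consequences of Theorems~\ref{th_chim} and \ref{th_mma}, the only external ingredient being the synthesis-theoretic fact that the reflexive hull of a weak* closed masa-bimodule equals $\frak M_{\max}$ of its support; equivalently, $\Ref(\frak M_{\min}(F))=\frak M_{\max}(F)$ for every $\omega$-closed $F\subseteq G\times G$ (see \cite{a}, \cite{eks}). First I would observe that, by the discussion in Section~\ref{s_prel}, invariance of a weak* closed subspace under all $S_w$, $w\in\frak S(G)$, already forces it to be a weak* closed masa-bimodule, so in the statement $\cl U$ is a reflexive masa-bimodule.

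Suppose first that $\cl U=\frak M_{\max}(E^*)$ for some closed $E\subseteq G$. Then, by Theorem~\ref{th_mma}(ii), $\cl U=\Bim(J(E)^\perp)$, and $J(E)$ is a closed ideal of $A(G)$; so the implication (iii)$\Rightarrow$(ii) of Theorem~\ref{th_chim} shows that $\cl U$ is invariant under all $S_w$, $w\in\frak S(G)$, and all ${\rm Ad}\rho_r$, $r\in G$. Reflexivity of $\cl U$ is not needed here, and is in any case automatic, since $\frak M_{\max}(E^*)$ is by definition the space of all operators supported on the $\omega$-closed set $E^*$.

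Conversely, suppose $\cl U$ is reflexive and invariant under all $S_w$ and all ${\rm Ad}\rho_r$. By the implication (ii)$\Rightarrow$(iii) of Theorem~\ref{th_chim} there is a closed ideal $J\subseteq A(G)$ with $\cl U=\Bim(J^\perp)$. I would set $E=Z(J)$, a closed subset of $G$, so that $J(E)\subseteq J\subseteq I(E)$ and hence, taking annihilators, $I(E)^\perp\subseteq J^\perp\subseteq J(E)^\perp$. Since the assignment $\cl X\mapsto\Bim(\cl X)=\overline{[\frak S(G)\cl X]}^{w^*}$ is inclusion-preserving, Theorem~\ref{th_mma} yields
\[ \frak M_{\min}(E^*)=\Bim(I(E)^\perp)\subseteq\cl U\subseteq\Bim(J(E)^\perp)=\frak M_{\max}(E^*). \]
Applying the inclusion-preserving operation $\Ref$, and using $\Ref(\cl U)=\cl U$ together with $\Ref(\frak M_{\min}(E^*))=\frak M_{\max}(E^*)$, forces $\cl U=\frak M_{\max}(E^*)$, as required.

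I do not anticipate a genuine obstacle: once Theorems~\ref{th_satlcg}, \ref{th_chim} and \ref{th_mma} are in hand, the argument is entirely formal. The one step requiring care is the appeal to the reflexive-hull characterisation of masa-bimodules; if one prefers not to quote it in the form above, the displayed sandwich can instead be used to show that $\cl U$ and $\frak M_{\max}(E^*)$ share the support $E^*$, after which the same characterisation, in the form ``a reflexive masa-bimodule equals $\frak M_{\max}$ of its support'', finishes the proof.
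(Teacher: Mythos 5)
Your proof is correct, and since the paper gives no argument beyond the remark that the theorem is ``an immediate consequence of Theorem \ref{th_chim}'', your elaboration is exactly the reasoning the authors have in mind: pass to $\Bim(J^\perp)$ via Theorem \ref{th_chim}, sandwich $\cl U$ between $\frak M_{\min}(E^*)$ and $\frak M_{\max}(E^*)$ with $E=Z(J)$ via Theorem \ref{th_mma}, and then invoke the standard fact (from \cite{a}, \cite{eks}) that a reflexive masa-bimodule equals $\frak M_{\max}$ of its support. This is essentially the same approach; there is nothing to add.
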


As a corollary to Theorem \ref{th_mma}, we obtain the following result of \cite{lt}:

\begin{theorem}[\cite{lt}]\label{th_opsynth}
Assume that $A(G)$ has an approximate identity. Then a closed set $E\subseteq G$
satisfies spectral synthesis if and only if the set $E^*\subseteq G\times G$
satisfies operator synthesis.
\end{theorem}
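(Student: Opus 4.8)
The plan is to deduce Theorem \ref{th_opsynth} directly from Theorem \ref{th_mma} together with Lemma \ref{le_appr}. The key point is that spectral synthesis of $E$ is the statement $J(E) = I(E)$, operator synthesis of $E^*$ is the statement $\frak{M}_{\min}(E^*) = \frak{M}_{\max}(E^*)$, and Theorem \ref{th_mma} identifies each side of the second equality with $\Bim$ applied to the annihilator of each side of the first. So the whole proof reduces to showing that $\Bim$ is injective on the relevant annihilators, which is exactly what Lemma \ref{le_appr} supplies under the approximate-identity hypothesis.

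First I would record the trivial direction: if $E$ satisfies spectral synthesis, then $J(E) = I(E)$, hence $J(E)^{\perp} = I(E)^{\perp}$, and applying $\Bim$ gives, by Theorem \ref{th_mma}, $\frak{M}_{\max}(E^*) = \Bim(J(E)^{\perp}) = \Bim(I(E)^{\perp}) = \frak{M}_{\min}(E^*)$; thus $E^*$ satisfies operator synthesis. This direction uses neither the approximate identity nor Lemma \ref{le_appr}.

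For the converse, suppose $E^*$ satisfies operator synthesis, i.e. $\frak{M}_{\min}(E^*) = \frak{M}_{\max}(E^*)$. By Theorem \ref{th_mma}, this reads $\Bim(I(E)^{\perp}) = \Bim(J(E)^{\perp})$. Intersecting both sides with $\vn(G)$ and invoking Lemma \ref{le_appr} (which applies since $A(G)$ has an approximate identity) gives
\[
I(E)^{\perp} = \Bim(I(E)^{\perp}) \cap \vn(G) = \Bim(J(E)^{\perp}) \cap \vn(G) = J(E)^{\perp}.
\]
Since $J(E)$ and $I(E)$ are both closed ideals of $A(G)$, taking pre-annihilators in $A(G)$ (using that a closed ideal equals the pre-annihilator of its annihilator, by Hahn--Banach) yields $J(E) = I(E)$, i.e. $E$ satisfies spectral synthesis.

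I do not expect a serious obstacle here: the substance of the theorem has already been packaged into Theorem \ref{th_mma} (the synthesis translation) and Lemma \ref{le_appr} (the injectivity of $\Bim$ on annihilators of ideals). The only point requiring a line of care is the passage from the equality of annihilators back to the equality of ideals — one must note that $J(E)$ and $I(E)$ are norm-closed, so each is recovered as the pre-annihilator in $A(G)$ of its weak* closed annihilator in $\vn(G)$ — but this is a routine application of the Hahn--Banach theorem and the duality $A(G)^* \cong \vn(G)$ recalled in Section \ref{s_prel}.
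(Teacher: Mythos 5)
Your proof is correct and follows the paper's own argument essentially verbatim: both directions proceed via Theorem \ref{th_mma}, and the converse invokes Lemma \ref{le_appr} exactly as the paper does. Your extra detail (making explicit the intersection with $\vn(G)$ and the passage from equality of annihilators back to equality of ideals via Hahn--Banach) is a correct unpacking of what the paper leaves implicit, but it is not a different route.
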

\begin{proof}
Assume $E$ satisfies spectral synthesis.
Then $I(E)=J(E)$ and it follows from  Theorem \ref{th_mma} that
$\frak{M}_{\min}(E^*)= \frak{M}_{\max}(E^*)$.
Conversely, if $\frak{M}_{\min}(E^*)= \frak{M}_{\max}(E^*)$ then,
by Theorem  \ref{th_mma},
 $\Bim (I(E)^{\bot }) = \Bim (J(E)^{\bot })$
and now, by Lemma \ref{le_appr}, $I(E)=J(E)$.
\end{proof}

\begin{question}
Let $E\subseteq G$ be a closed subset.
 Is every weak* closed masa-bimodule $\cl U$
with $\frak{M}_{\min}(E^*)\subseteq \cl U \subseteq \frak{M}_{\max}(E^*)$
of the form $\cl U = \Bim(J^{\perp})$ for some closed ideal $J\subseteq A(G)$? 
\end{question}

 In view of Theorem \ref{th_chim},
the above question asks, in other words, whether
there exist closed sets $E\subseteq G$ such that $E^*$ supports
a weak* closed masa-bimodule
not invariant under conjugation by the unitaries $\rho_s$, $s\in G$
(see Section \ref{s_invsub}).
If such a set exists, it will necessarily be non-synthetic; for if
$E$ is synthetic, then Theorem \ref{th_opsynth}
gives  $\frak{M}_{\min}(E^*)=\frak{M}_{\max}(E^*)$
and consequently no such bimodule exists.

\section{Relative synthesis}\label{s_rs}

In this section, we obtain an extension of Theorem \ref{th_opsynth}
which links relative spectral synthesis to relative operator synthesis.
Theorem \ref{th_rs} was proved by
K. Parthasarathy and R. Prakash in \cite[Theorem 4.6]{pp} 
under the assumption that $\cl X$ is an  $A(G)$-invariant subspace of $\vn(G)$ 
and $G$ is compact. In our  result we
assume that $\cl X$ is  weak* closed  and 
$A(G)$ possesses an approximate identity.

We recall the relevant definitions. Let
$\cl X\subseteq \vn(G)$ be an $A(G)$-invariant subspace. 
A closed subset $E\subseteq G$
is called \emph{$\cl X$-spectral} \cite{kl} if
$\cl X\cap J(E)^\perp = \cl X\cap I(E)^\perp$. 
This notion has a natural operator theoretic version:
if $\cl U$ is a weak* closed masa-bimodule and $F$ is an
$\omega$-closed set, we say that
$F$ is \emph{$\cl U$-operator synthetic} if
$\cl U\cap \frak{M}_{\min}(F) = \cl U\cap \frak{M}_{\max}(F)$.
The latter notion was
defined in \cite{pp} for subsets of $G\times G$, where $G$ is a
compact group.

\begin{proposition}\label{p_inf}
(i) \ Let $\cl X_1$ and $\cl X_2$ be weak* closed invariant subspaces of $\vn(G)$.
Then $\Bim(\cl X_1\cap \cl X_2) = \Bim(\cl X_1)\cap \Bim(\cl X_2)$.

(ii) Let $J_1$ and $J_2$ be 
closed ideals of $A(G)$. Then
$\Sat(J_1)  \cap \Sat(J_2) = \Sat(J_1\cap J_2).$
\end{proposition}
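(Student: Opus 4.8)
The plan is to derive both identities from the annihilator formula $\Sat(J)^\perp=\Bim(J^\perp)$ of Theorem~\ref{th_satlcg}, together with the elementary Banach-space duality relating intersections of subspaces to weak* closures of sums of their annihilators. The only additional ingredient is that $J\mapsto\Sat(J)$ is linear and continuous in a suitable sense, which follows from the description $\Sat(J)=\overline{[N(J)T(G)]}^{\|\cdot\|_t}$ of Proposition~\ref{newsat}, the linearity of $N$, the contractivity of $N\colon A(G)\to\frak{S}(G)$ (Theorem~\ref{spro} and the contractive inclusion $A(G)\hookrightarrow M^{\cb}A(G)$), and the inequality $\nor{wh}_t\le\nor{w}_{\frak{S}}\nor{h}_t$ for $w\in\frak{S}(G)$, $h\in T(G)$.

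\textbf{Part (i).} Since every weak* closed invariant subspace of $\vn(G)$ is the annihilator of a closed ideal, write $\cl X_i=J_i^\perp$ with $J_i\subseteq A(G)$ closed ideals. Then $\cl X_1\cap\cl X_2=J_1^\perp\cap J_2^\perp=(\overline{J_1+J_2})^\perp$, where $\overline{J_1+J_2}$ is again a closed ideal. The key step is the identity
\[
\overline{\Sat(J_1)+\Sat(J_2)}^{\|\cdot\|_t}=\Sat(\overline{J_1+J_2}).
\]
Granting it, take annihilators in $\cl B(L^2(G))$ and apply Theorem~\ref{th_satlcg} twice to obtain
\[
\Bim(\cl X_1)\cap\Bim(\cl X_2)=\Sat(J_1)^\perp\cap\Sat(J_2)^\perp=\bigl(\overline{\Sat(J_1)+\Sat(J_2)}^{\|\cdot\|_t}\bigr)^\perp=\Sat(\overline{J_1+J_2})^\perp=\Bim\bigl((\overline{J_1+J_2})^\perp\bigr)=\Bim(\cl X_1\cap\cl X_2).
\]
For the displayed identity, $\supseteq$ is immediate since $\Sat(J_i)\subseteq\Sat(\overline{J_1+J_2})$ and the latter is norm closed. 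For $\subseteq$, by Proposition~\ref{newsat} it suffices to show $N(u)h\in\overline{\Sat(J_1)+\Sat(J_2)}^{\|\cdot\|_t}$ for all $u\in\overline{J_1+J_2}$ and $h\in T(G)$. Choosing $u_1^{(n)}\in J_1$, $u_2^{(n)}\in J_2$ with $u_1^{(n)}+u_2^{(n)}\to u$ in $A(G)$, we have $N(u_1^{(n)}+u_2^{(n)})h=N(u_1^{(n)})h+N(u_2^{(n)})h\in\Sat(J_1)+\Sat(J_2)$ and $\nor{N(u)h-N(u_1^{(n)}+u_2^{(n)})h}_t\le\nor{N(u-u_1^{(n)}-u_2^{(n)})}_{\frak{S}}\nor{h}_t\to0$, whence the claim.

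\textbf{Part (ii).} The inclusion $\Sat(J_1\cap J_2)\subseteq\Sat(J_1)\cap\Sat(J_2)$ is trivial. For the reverse, pass to annihilators in $\cl B(L^2(G))$: since the annihilator of an intersection of norm closed subspaces of $T(G)$ is the weak* closure of the sum of their annihilators, Theorem~\ref{th_satlcg} gives
\[
\bigl(\Sat(J_1)\cap\Sat(J_2)\bigr)^\perp=\overline{\Sat(J_1)^\perp+\Sat(J_2)^\perp}^{w^*}=\overline{\Bim(J_1^\perp)+\Bim(J_2^\perp)}^{w^*}=\Bim(J_1^\perp)\vee\Bim(J_2^\perp).
\]
By the identity $\Bim(J_1^\perp)\vee\Bim(J_2^\perp)=\Bim((J_1\cap J_2)^\perp)$ proved in Corollary~\ref{c_lat}, and by Theorem~\ref{th_satlcg} once more, the right-hand side equals $\Sat(J_1\cap J_2)^\perp$. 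Taking pre-annihilators, and using that a norm closed subspace of $T(G)$ coincides with the pre-annihilator of its annihilator, we conclude $\Sat(J_1)\cap\Sat(J_2)=\Sat(J_1\cap J_2)$.

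\textbf{On the main obstacle.} Once Theorem~\ref{th_satlcg} is available there is no substantial difficulty: the argument is duality bookkeeping — tracking annihilators versus pre-annihilators and norm versus weak* closures — plus the routine linearity and continuity of $\Sat$ used in the displayed identity of Part~(i). Either part could alternatively be obtained from the other purely by duality, but the arguments above are already short.
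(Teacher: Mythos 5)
Your proof is correct and follows essentially the same route as the paper: the key step of part (i) — showing $\Sat(\overline{J_1+J_2}) \subseteq \overline{\Sat(J_1)+\Sat(J_2)}^{\|\cdot\|_t}$ by approximating $u\in\overline{J_1+J_2}$ in $A(G)$ and using the contractivity of $N$ into $\frak S(G)$ — is exactly what the paper establishes (phrased there in annihilator form as $\Sat(J_1)^\perp\cap\Sat(J_2)^\perp\subseteq\Sat(\overline{J_1+J_2})^\perp$), and part (ii) is identical, invoking Corollary~\ref{c_lat} whose $\vee$-identity does not depend on the present proposition, so there is no circularity. One small slip: in your proof of the identity $\overline{\Sat(J_1)+\Sat(J_2)}^{\|\cdot\|_t}=\Sat(\overline{J_1+J_2})$ the direction labels are swapped — the containment you call ``$\supseteq$'' and dismiss as immediate (via $\Sat(J_i)\subseteq\Sat(\overline{J_1+J_2})$) is really $\subseteq$, while the approximation argument gives $\supseteq$.
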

\begin{proof}
(i) Let $J_i \subseteq A(G)$ be a closed ideal with $J_i^{\perp} = \cl X_i$, $i = 1,2$.
Let $J = \overline{J_1 + J_2}$; then $J$ is
the smallest closed ideal of $A(G)$ containing both $J_1$ and $J_2$.
Note that 
\begin{equation}\label{eq_sat}
\Sat(J_1)^{\perp} \cap \Sat(J_2)^{\perp}\subseteq \Sat(J)^{\perp}.
\end{equation}
Indeed, if $T\in \cl B(L^2(G))$ annihilates both $\Sat(J_1)$ and $\Sat(J_2)$
then, by Proposition \ref{newsat}, $T$ annihilates $N(J_1) T(G)$ and $N(J_2)T(G)$,
hence  their sum; continuity of the map $N$ shows  that $T$ must annihilate
 $N(J)T(G)$ and hence  $\Sat(J)$.

It is obvious that
$\Bim(\cl X_1\cap \cl X_2) \subseteq \Bim(\cl X_1)\cap \Bim(\cl X_2)$.
Suppose that $T\in \Bim(\cl X_1)\cap \Bim(\cl X_2)$. By Theorem \ref{th_satlcg}
and (\ref{eq_sat}),
$T\in \Bim(J^{\perp})$.
But $\Bim(J^{\perp})\subseteq \Bim(J_1^\perp\cap J_2^\perp)
=\Bim(\cl X_1\cap \cl X_2)$, since $J_i\subseteq J$, $i = 1,2$.
Thus, $T\in \Bim(\cl X_1\cap \cl X_2)$ and the proof is complete. 

(ii) It follows from Corollary \ref{c_lat} and Theorem \ref{th_satlcg} that
\[\overline{\Sat(J_1)^{\perp} + \Sat(J_2)^{\perp}}^{w^*}= \Sat(J_1\cap J_2)^{\perp}.\] 
Taking pre-annihilators, the result follows.
\end{proof}

\begin{theorem}\label{th_rs}
Assume that $A(G)$ possesses an approximate identity. 
Let $E\subseteq G$ be a closed set,
$\cl X\subseteq \vn(G)$  a weak* closed invariant subspace 
and $\cl U = \Bim(\cl X)$.
The following are equivalent:

(i) \ $E$ is $\cl X$-spectral;

(ii) $E^*$ is $\cl U$-operator synthetic.
\end{theorem}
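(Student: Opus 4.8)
The plan is to translate each side of the equivalence into a statement about the bimodules $\Bim(\cl X)$, $\frak M_{\min}(E^*)$ and $\frak M_{\max}(E^*)$ using the machinery already set up, and then to move between the $A(G)$-world and the $T(G)$-world via Theorem \ref{th_satlcg}, Proposition \ref{newsat} and Lemma \ref{le_appr}. Write $J\subseteq A(G)$ for the closed ideal with $J^\perp = \cl X$, so that $\cl U = \Bim(J^\perp)$ and, by Theorem \ref{th_satlcg}, $\cl U_\perp = \Sat(J)$. Recall from Section \ref{s_eb} that $I(E)^\perp$ and $J(E)^\perp$ are the generating invariant subspaces of $\frak M_{\min}(E^*)$ and $\frak M_{\max}(E^*)$ respectively (Theorem \ref{th_mma}), and that $\cl X\cap I(E)^\perp = (I(E)+J)^\perp$ while $\cl X\cap J(E)^\perp = (J(E)+J)^\perp$ (annihilators of sums of ideals). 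Thus the condition ``$E$ is $\cl X$-spectral'' is precisely $\overline{I(E)+J} = \overline{J(E)+J}$, an equality of closed ideals of $A(G)$.

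First I would handle the passage from ideals to bimodules using Proposition \ref{p_inf}(i): for weak* closed invariant subspaces $\cl X_1,\cl X_2$ one has $\Bim(\cl X_1\cap\cl X_2) = \Bim(\cl X_1)\cap\Bim(\cl X_2)$. Applying this with $\cl X_1 = \cl X$ and $\cl X_2 = I(E)^\perp$ gives
\[
\cl U\cap \frak M_{\min}(E^*) = \Bim(\cl X)\cap\Bim(I(E)^\perp) = \Bim\bigl(\cl X\cap I(E)^\perp\bigr) = \Bim\bigl((\overline{I(E)+J})^\perp\bigr),
\]
and similarly $\cl U\cap\frak M_{\max}(E^*) = \Bim\bigl((\overline{J(E)+J})^\perp\bigr)$, where I have used Theorem \ref{th_mma} to identify $\Bim(I(E)^\perp) = \frak M_{\min}(E^*)$ and $\Bim(J(E)^\perp) = \frak M_{\max}(E^*)$. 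Hence $E^*$ is $\cl U$-operator synthetic if and only if $\Bim\bigl((\overline{I(E)+J})^\perp\bigr) = \Bim\bigl((\overline{J(E)+J})^\perp\bigr)$.

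Now the equivalence (i)$\Leftrightarrow$(ii) reduces to the injectivity of the map $\cl Y\mapsto\Bim(\cl Y)$ on weak* closed invariant subspaces of $\vn(G)$, which is exactly the content of the Corollary following Lemma \ref{le_appr}: when $A(G)$ has an approximate identity, $\Bim$ is a bijection (equivalently, $\cl Y = \Bim(\cl Y)\cap\vn(G)$ for every such $\cl Y$ by Lemma \ref{le_appr}). For (i)$\Rightarrow$(ii) this is immediate: if $\overline{I(E)+J} = \overline{J(E)+J}$ then the two annihilators coincide and so do the two bimodules. For (ii)$\Rightarrow$(i), apply Lemma \ref{le_appr} to each side to recover $(\overline{I(E)+J})^\perp = \Bim\bigl((\overline{I(E)+J})^\perp\bigr)\cap\vn(G)$ and likewise for $J(E)$; since the two $\Bim$'s agree, so do the two annihilators, hence $\overline{I(E)+J} = \overline{J(E)+J}$, i.e. $\cl X\cap J(E)^\perp = \cl X\cap I(E)^\perp$.

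The main obstacle is bookkeeping rather than conceptual: one must verify carefully that $\cl X\cap I(E)^\perp$ really is the annihilator of $\overline{I(E)+J}$ as a \emph{weak* closed} invariant subspace (so that Proposition \ref{p_inf}(i) and Lemma \ref{le_appr} apply), and that the intersection $\Bim(\cl X)\cap\frak M_{\min}(E^*)$ in the operator-synthesis condition matches the $\Bim$ of that intersection — this is precisely where Proposition \ref{p_inf}(i) is needed and where a careless argument could slip. Everything else is a clean diagram chase through Theorems \ref{th_satlcg}, \ref{th_mma} and Lemma \ref{le_appr}.
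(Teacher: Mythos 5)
Your proposal is correct and takes essentially the same route as the paper: you invoke Proposition \ref{p_inf}(i) and Theorem \ref{th_mma} to rewrite both sides of each relative-synthesis condition as $\Bim$ of a weak* closed invariant subspace, and then use Lemma \ref{le_appr} (injectivity of $\Bim$ in the presence of an approximate identity) for the implication (ii)$\Rightarrow$(i). The reformulation via the ideals $\overline{I(E)+J}$ and $\overline{J(E)+J}$ is just explicit bookkeeping of the same argument.
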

\begin{proof}
Suppose $E$ is $\cl X$-spectral. Then $\cl X\cap J(E) ^\perp= \cl X\cap I(E)^\perp$.
By Proposition \ref{p_inf} and Theorem \ref{th_mma},
$\cl U\cap \frak{M}_{\min}(E^*) = \cl U\cap \frak{M}_{\max}(E^*)$.
Thus, $E^*$ is $\cl U$-operator synthetic.

Conversely, suppose $E^*$ is $\cl U$-operator synthetic.
Then 
$$\cl U\cap \frak{M}_{\min}(E^*) \cap \vn(G)
= \cl U\cap \frak{M}_{\max}(E^*) \cap \vn(G).$$
By Lemma \ref{le_appr} and   Theorem \ref{th_mma},
$\cl X\cap J(E) = \cl X\cap I(E)$, that is, $E$ is $\cl X$-spectral.
\end{proof}

\noindent
{\bf Acknowledgement. } We are very grateful to the referee for carefully
reading our paper and suggesting a number of improvements.

\end{document}